\documentclass[11pt,english]{article}
\usepackage{a4wide}
\usepackage{amssymb}
\usepackage{amsthm}
\usepackage{amsmath}
\usepackage{amsfonts}
\usepackage{latexsym}
\usepackage{amsxtra}
\usepackage{graphicx}
\usepackage{wrapfig}
\usepackage{mathrsfs}
\usepackage{times}
\usepackage{makeidx}
\usepackage{esint}
\usepackage{wasysym}
\usepackage{xr}
\usepackage{cite}
\usepackage[breaklinks=true,bookmarks=true,linktocpage=true,pdfpagelabels=true,plainpages=false]{hyperref}

\setlength{\textheight}{20.7cm}

\newcommand{\xx}{\mbox{$\clubsuit$}}
\newcommand{\yy}{\mbox{$\spadesuit$}}
\newcommand{\zz}{\mbox{$\bigstar$}}

\providecommand{\MR}[1]{}

\newcommand{\heikodetail}[1]{}

\newcommand{\reallyinvisible}[2]{}      

\newfont{\thickmath}{msbm10 scaled \magstephalf}%
\newfont{\smallthickmath}{msbm7 scaled \magstephalf}%
\newfont{\footnotethickmath}{msbm8}%
\newfont{\footnotesmallthickmath}{msbm6}%


\newcommand{\s}{\sigma}        %

\theoremstyle{plain}             
\newtheorem{lemma}{\bf Lemma}[section]
\newtheorem{theorem}[lemma]{\bf Theorem}
\newtheorem{proposition}[lemma]{\bf Proposition}
\newtheorem{corollary}[lemma]{\bf Corollary}

\theoremstyle{definition}             
\newtheorem{definition}[lemma]{\bf Definition}

\theoremstyle{remark}             
\newtheorem{remark}[lemma]{\bf Remark}
\newtheorem*{remark*}{\bf Remark}

\newtheorem{fact}[lemma]{\bf Observation}

\newcommand{\Fo}{\,\,\,\text{for }\,\,}
\newcommand{\Foa}{\,\,\,\text{for all }\,\,}

\newcommand{\stint}{\mathop{\int\mskip-18.5mu-}}
\newcommand\Reals{{\mathbb R}}
\newcommand\R{{\mathbb R}}
\newcommand\Z{{\mathbb Z}}

\newcommand\N{{\mathbb N}}
\renewcommand\S{{\mathbb S}}

\newcommand\Sphere{{\mathbb S}}

\newcommand{\bbbr}{\Reals}

\newcommand\dist{\mathop{\rm dist}\nolimits}
\newcommand\diam{\mathop{\rm diam}\nolimits}

\newcommand\Span{\mathop{\rm span}\nolimits}

\newcommand\ang{\mathop{\mbox{$<\!\!\!)$}}\nolimits}

\newcommand\Id{{{\rm Id}}}

\newcommand{\F}{\mathscr{F}}
\newcommand{\E}{\mathscr{E}}

\newcommand{\V}{\mathscr{V}}

\newcommand{\DC}{K}
\newcommand{\GMC}{\mathcal{K}_G}
\newcommand{\GTP}{\mathcal{K}_\textnormal{tp}}
\newcommand{\GCi}{\mathcal{K}^{(i)}}
\newcommand{\HD}{d_{\mathscr{H}}}
\newcommand{\Ball}{\mathbb{B}}
\newcommand{\CBall}{\overline{\mathbb{B}}}
\newcommand{\hmin}{\mathfrak{h}_{\text{min}}}
\newcommand{\height}{\mathfrak{h}}
\newcommand{\face}{\mathfrak{fc}}
\newcommand{\ntheta}{\theta}
\newcommand{\nbeta}{\beta}

\DeclareMathOperator{\graph}{Graph}
\DeclareMathOperator{\conv}{conv}
\DeclareMathOperator{\aff}{aff}
\DeclareMathOperator{\lin}{span}
\DeclareMathOperator{\dgras}{\varangle}

\renewcommand{\R}{\bbbr}

\newcommand{\eps}{\varepsilon}
\renewcommand{\H}{\mathscr{H}}
\newcommand{\todo}[1]{}

\def\osc{\mathop{\rm osc\,}}         
\def\rtp{R_{\rm tp}}

\topmargin -.395cm

\begin{document}

\renewcommand{\thefigure}{\arabic{figure}}


\title{\large\bf Characterizing  $W^{2,p}$~submanifolds by $p$-integrability of global curvatures}

\author{\normalsize S\l{}awomir Kolasi\'{n}ski\thanks{Partially supported by the
    MNiSzW Research project~N N201 611140.},\quad Pawe\l{}
  Strzelecki\thanks{Partially supported by the DFG--MNiSzW research project
    \emph{Geometric curvature energies (Mo966/4-1)}.},\quad Heiko von der
  Mosel\footnotemark[2]}

\date{\normalsize \today}

\maketitle


\frenchspacing

\vspace*{-.1cm}

\begin{abstract}
  We give sufficient and necessary geometric conditions, guaranteeing that an
  immersed compact closed manifold $\Sigma^m\subset \R^n$ of class $C^1$ and of
  arbitrary dimension and codimension (or, more generally, an Ahlfors-regular
  compact set $\Sigma$ satisfying a mild general condition relating the size of
  holes in $\Sigma$ to the flatness of $\Sigma$ measured in terms of beta
  numbers) is in fact an \emph{embedded\/} manifold of class $C^{1,\tau}\cap
  W^{2,p}$, where $p>m$ and $\tau=1-m/p$. The results are based on a careful
  analysis of Morrey estimates for integral curvature--like energies, with
  integrands expressed geometrically, in terms of functions that are designed to
  measure either (a) the shape of simplices with vertices on $\Sigma$ or (b) the
  size of spheres tangent to $\Sigma$ at one point and passing through another
  point of $\Sigma$.

  Appropriately defined \emph{maximal functions} of such integrands turn out to
  be of class $L^p(\Sigma)$ for $p>m$ if and only if the local graph
  representations of $\Sigma$ have second order derivatives in $L^p$ \emph{and}
  $\Sigma$ is embedded. There are two ingredients behind this result. One of
  them is an equivalent definition of Sobolev spaces, widely used nowadays in
  analysis on metric spaces. The second one is a careful analysis of local
  Reifenberg flatness (and of the decay of functions measuring that flatness)
  for sets with finite curvature energies. In addition, for the geometric
  curvature energy involving tangent spheres we provide a nontrivial lower bound
  that is attained if and only if the admissible set $\Sigma$ is a round sphere.
  \vspace{2mm}

  \centering{MSC 2000: 28A75, 46E35, 53A07}
\end{abstract}

\setcounter{tocdepth}{3}

\bigskip 


\renewcommand\theequation{{\thesection{}.\arabic{equation}}}
\def\setnumbers{\setcounter{equation}{0}}



\section{Introduction}\label{sec:1}

In this paper we address the following question: under what circumstances is a
compact, $m$-dimensional set $\Sigma$ in $\R^n$, satisfying some mild additional
assumptions, an $m$-dimensional embedded manifold of class $W^{2,p}$? For
$p>m=\dim\Sigma$ we formulate two necessary and sufficient criteria for a
positive answer.  Each of them says that $\Sigma$ is an embedded manifold of
class $W^{2,p}$ if and only if a certain geometrically defined integrand is of
class $L^p$ with respect to the $m$-dimensional Hausdorff measure on $\Sigma$.
One of these integrands measures the flatness of all $(m+1)$-dimensional
simplices with one vertex at a fixed point of $\Sigma$ and other vertices
elsewhere on $\Sigma$; see Definition \ref{def:1.2}.  The other one measures the
size of all spheres that touch an $m$-plane passing through a fixed point of
$\Sigma$ and contain another (arbitrary) point of $\Sigma$ (Definition
\ref{def:1.3}).

The extra assumptions we impose on the set $\Sigma$ are: (1) Ahlfors regularity
with respect to the $m$-dimensional Hausdorff measure $\H^m$, and (2) roughly
speaking, a certain relation between the flatness of $\Sigma$ and the size of
``holes'' it might have: the flatter $\Sigma$ is, the smaller these holes must
be.  To state the main result, Theorem \ref{mainthm}, formally, let us first
specify these two conditions precisely and then define the geometric integrands
mentioned above.  Throughout the paper we denote with $\Ball^n(a,s)$ an {\it
  open} $n$-dimensional ball of radius $s$ centered at the point $a\in\R^n$, and
we write $a\approx b$ if $a/C\le b\le Ca$ for some constant $C\ge 1$, and $
a\lesssim b$ (or $a\gtrsim b$), if only the left (or right) of these
inequalities holds.

\subsection{Statement of results}

\begin{definition}[\textbf{the class of $m$-fine sets}] \label{def:fine} Let
  $\Sigma \subset \R^n$ be compact.  We call $\Sigma$ an \emph{$m$-fine set} and
  write $\Sigma \in \F(m)$ if there exist constants $A_{\Sigma} > 0$ and
  $M_{\Sigma} \ge 2$ such that
  \begin{enumerate}
    \renewcommand{\labelenumi}{(\roman{enumi})}
  \item \textbf{(Ahlfors regularity)}
    \label{fine:reg}
    for all $x \in \Sigma$ and $r \le\diam\Sigma$ we have
    \begin{equation}
      \label{eq:fine:reg}
      \H^m(\Sigma \cap \Ball^n(x,r)) \ge A_{\Sigma} r^m\, ;
    \end{equation}
  \item \textbf{(control of ``holes'' in small scales)}
    \label{fine:gaps}
    for each $x \in \Sigma$ and $r \le \diam\Sigma$ we have
    \begin{displaymath}
      \ntheta_\Sigma(x,r) \le M_{\Sigma} \, \nbeta_\Sigma(x,r) \,.
    \end{displaymath}
  \end{enumerate}
\end{definition}

\reallyinvisible{\zz}{ {\tt \zz This Definition is a little bit problematic. If the
    second condition is to be satisfied for all $r \le \diam\Sigma$ then later
    we cannot make $M_{\Sigma}$ be an absolute constant as we claim in
    Remark~\ref{rem:Msigma} because we can make $\diam\Sigma$ as big as we want
    without increasing the curvature energy $\E_p$. To see this take a smooth
    cigar (cylinder with a spherical cap on each end) of length e.g. $9^{9^9}$
    and cross-sectional diameter $1$. For the point at the end of the cigar and
    for big $r$ the $\theta$ numbers can get arbitrarily close to $\frac 12$. I
    modified the arguments in Remark~\ref{rem:Msigma} and I believe now
    everything works.} }

\reallyinvisible{\yy}{ {\tt\xx do we need $\H^m(\Sigma)<\infty$?  cf. Hahlomaah's
    generalization of L\'eger's result...}  We can admit $\Sigma$ of infinite
  measure; a posteriori the measure must be finite.}

Here, $\beta_\Sigma$ and $\theta_\Sigma$ denote, respectively, the {\it beta
  numbers} and the {\it bilateral beta numbers} of $\Sigma$, defined by
\begin{align}
  \label{def:beta}
  \nbeta_\Sigma(x,r) &:= \frac 1r \inf \left\{
    \sup_{z \in \Sigma \cap \Ball(x,r)} \dist(z,x+H) : H \in G(n,m)
  \right\}, \\
  \label{def:theta}
  \ntheta_\Sigma(x,r) &:= \frac 1r \inf\Bigl\{
  \HD(\Sigma \cap \CBall(x,r), (x + H) \cap \CBall(x,r)) : H \in G(n,m)
  \Bigr\} \, ,
\end{align}
where $G(n,m)$ stands for the Grassmannian of all $m$-dimensional linear subspaces of $\R^n$, and where 
\begin{displaymath}
  \HD(E,F) := \sup\{ \dist(y,F) : y \in E \} + \sup\{ \dist(z,E) : z \in F \} 
\end{displaymath}
is the Hausdorff distance of sets in $\R^n$.  Intuitively, condition (ii) of
Definition~\ref{def:fine} ascertains that \emph{if} $\Sigma$ is flat at some
scale $r>0$, then the gaps and holes in $\Sigma$ cannot be large. Their sizes
are at most comparable to the degree of flatness of $\Sigma$.  If an $m$-fine
set $\Sigma$ satisfies $\beta_\Sigma(x,r)\to 0$ uniformly w.r.t $x\in\Sigma$ as
$r\to 0$, then $\Sigma$ is {\it Reifenberg flat with vanishing constant}, see
e.g. G. David, C. Kenig and T. Toro \cite[Definition 1.3]{davidkenigtoro} for a
definition.  However, note that neither the Reifenberg flatness of $\Sigma$, nor
\emph{rectifiability\/} of $\Sigma$ itself is required in
Definition~\ref{def:fine}. Both these properties \emph{follow} from the
finiteness of geometric curvature energies we consider here.

It is relatively easy to see that $\F(m)$ contains immersed $C^1$ submanifolds
of $\R^n$ (cf. \cite[Example 1.60]{slawek-phd} for a short proof), or embedded
Lipschitz submanifolds without boundary. It also contains other sets such as the
following stack of spheres $\Sigma=\bigcup_{i=0}^\infty\Sigma_i\cup\{0\}$, where
the $2$-spheres $\Sigma_i=\S^2(c_i,r_i)\subset\R^3$ with radii $r_i=2^{-i-2}>0$
are centered at the points $c_i=(p_i+p_{i+1})/2$ for $p_i=(2^{-i},0,0)\in \R^3$,
$i=0,1,2,\ldots.$ Note that the spheres $\Sigma_i$ and $\Sigma_{i+1}$ touch each
other at $p_{i+1}$, and the whole stack $\Sigma$ is an admissible set in the
class $ \F(2)$; see Figure \ref{Fig1}.

\begin{figure}[!h]  
  \begin{center}  
    \includegraphics*[totalheight=5cm]{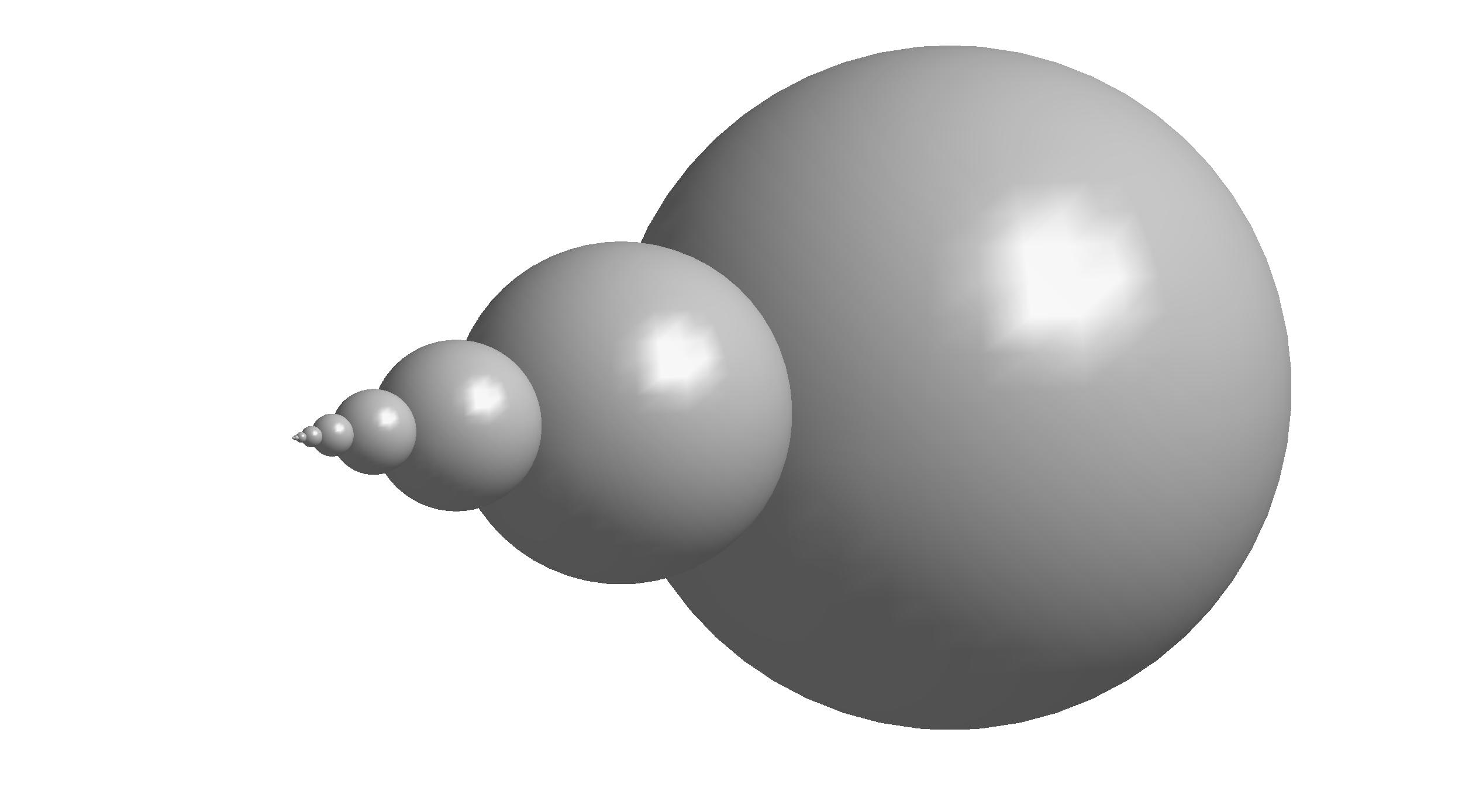} 
    \quad
    \includegraphics*[totalheight=5.8cm]{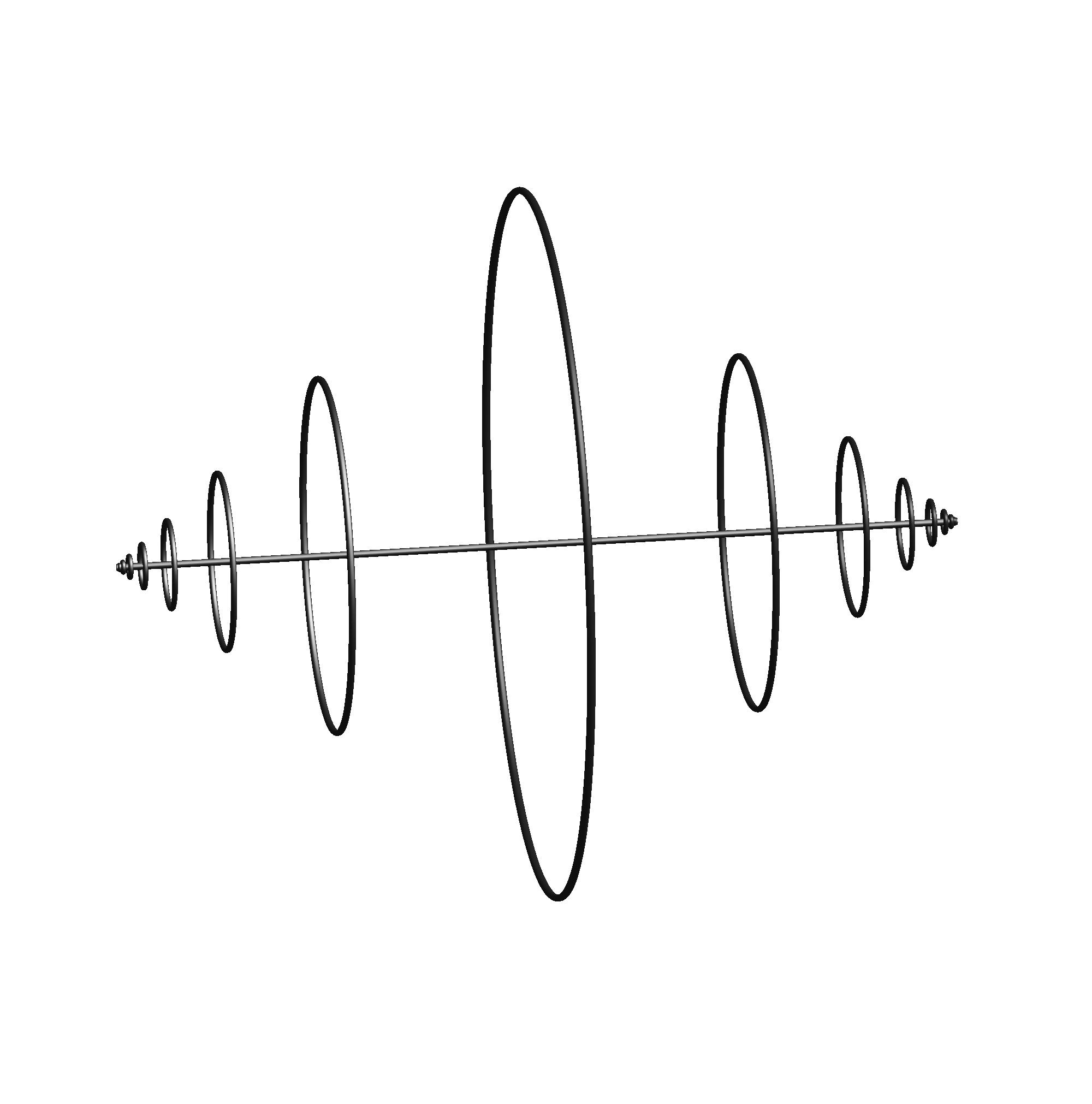}
  \end{center}
  
  \caption{\label{Fig1} Left: a union of countably many spheres is in $\F(2)\cap
    \mathscr{A}(\delta)$. Right: a set in $\F(1)\setminus \mathscr{A}(\delta)$.}
\end{figure}

A slightly different class $\mathscr{A}(\delta)$ of admissible sets was used by
the second and third author in \cite{svdm-tp1}. Roughly speaking, the elements
of $\mathscr{A}(\delta)$ are Ahlfors regular unions of countably many continuous
images of closed manifolds, and have to satisfy two more conditions: a certain
degree of flatness and a related linking condition; all this holds up to a set
of $\H^m$-measure zero. The class $\mathscr{A}(\delta)$ contains, for example,
finite unions of $C^1$ embedded manifolds that intersect each other along sets
of $\H^m$-measure zero (such as the stack of spheres in Figure \ref{Fig1}), and
bi-Lipschitz images of such unions, but also certain sets with cusp
singularities. For example, an arc with two tangent segments,
\[
A=\big\{x\in \R^2\colon x_1,x_2\ge0 \text{ and } \big(x_1^2+x_2^2=
1\text{ or }\max_{i=1,2} |x_i|=1\big)\big\} 
\]
is in $\mathscr{A}(\delta)$ for each $\delta>0$. However, $A$ is not in $\F(1)$
as the $\beta_A(\cdot,r)$ goes to zero as $r\to 0$ at the cusp points while
$\theta_A(x,r)$ remains constant there. On the other hand, the union of a
segment and countably many circles that are contained in planes perpendicular to
that segment,
\[
\big\{(t,0,0)\colon t\in [0,1]\big\}\cup \bigcup_{j=1}^\infty 
\gamma_j \cup \bigcup_{j=2}^\infty\tilde\gamma_j ,
\]
where
\[
\gamma_j=\big\{ 2^{-j}(1 , \cos \varphi,\sin\varphi)
\colon 
\varphi\in [0,2\pi]\big\}
\]
and $\tilde\gamma_j$ is the image of $\gamma_j$ under the reflection
$(x,y,z)\mapsto (1-x,y,z)$, is not in $\mathscr{A}(\delta)$ as the linking
condition is violated at all the points of the segment but it does belong to
$\F(1)$, as the circles prevent the $\beta(x,r)$ from going to zero at the
endpoints of the segment.

Both $\F(m)$ and $\mathscr{A}(\delta)$ contain sets of fractal dimension,
e.g. sufficiently flat von Koch snowflakes.  However, if one of our curvature
energies of $\Sigma$ is finite, it follows rather easily that the Hausdorff
dimension of $\Sigma$ must be $m$.


\begin{definition}[\textbf{Global Menger curvature at a point}]
\label{def:1.2}
  Let $\Sigma\in \mathcal{F}(m)$ and $x\in \Sigma$. Set
  \[
  \GMC[\Sigma](x)\equiv\GMC(x):=
 \sup_{x_1,\ldots, x_{m+1} \in \Sigma} \DC(x,x_1,\ldots, x_{m+1}) \, ,
  \]
  where
  \begin{equation}\label{Menger_simplex}
    \DC(x, x_1, \ldots, x_{m+1}):=
 \frac{\H^{m+1}(\conv(x ,x_1,\ldots, x_{m+1}))}{\diam \bigl(\{x ,x_1,\ldots, x_{m+1}\}\bigr)^{m+2}},
  \end{equation}
  and $\conv(E)$ and $\diam (E)$ denote the convex hull and the diameter of a
  set $E$, respectively\footnote{The function in \eqref{Menger_simplex}
    resembles the type of discrete curvatures considered by G. Lerman and
    J.T. Whitehouse \cite{LW08a}, \cite{LW08b} but scales differently, see
    Remark~5.2 in \cite{svdm-surfaces}. }.  We say that $\GMC(x)$ is the
  \emph{global Menger curvature of \,$\Sigma$\, at $x$.}
\end{definition}
When $m=1$ and $\Sigma$ is just a curve or a more general one-dimensional set\,
then $\DC(x_0,x_1,x_2)$ is the ratio of the area of the triangle
$T=\conv(x_0,x_1,x_2)$ to the third power of the maximal edge length of $T$.
Thus, $\DC$ is controlled by $R(T)^{-1}$, where $R(T)$ is the circumradius
of~$T$;
\[
\DC(x_0,x_1,x_2)\le \frac{1}{4R(T)}=\frac{\text{Area}\, (T)}{|x_0-x_1|\, |x_1-x_2|\, |x_2-x_0|}\, .
\]
For triangles with angles bounded away from $0$ and $\pi$, both quantities are
in fact comparable.  Therefore, in this case our global curvature function
$\GMC$ does not exceed a constant multiple of the \emph{global curvature} as
defined by O. Gonzalez and J.H. Maddocks \cite{GM}, and widely used afterwards;
see e.g. \cite{GMSvdM}, \cite{CKS02}, \cite{heiko1}, \cite{heiko2},
\cite{heiko3}, \cite{stvdm-MathZ}, \cite{gerlachvdm1}, \cite{gerlachvdm2}, and
for global curvature on surfaces \cite{StvdM1}, \cite{StvdM2}. Also for $m=2$,
integrated powers of a function quite similar to $K(x_0,x_1,x_2)$ in
\eqref{Menger_simplex} were used in \cite{svdm-surfaces} to prove geometric
variants of Morrey-Sobolev imbedding theorems for compact two-dimensional sets
in $\R^3$ in an admissibility class slightly more general than the class
$\mathscr{A}(\delta)$ defined in \cite{svdm-tp1}.

To define the second integrand, we first introduce the \emph{tangent-point
  radius}, which for the purposes of this paper is a function
\[
\rtp\colon \Sigma\times\Sigma \times G(n,m) \to [0,+\infty]
\]
given by
\begin{equation}\label{rtp}
  \rtp (x,y;H):=\frac{|y-x|^2}{2\dist(y,x+H)}\, .
\end{equation}
Geometrically, this is the radius of the smallest sphere tangent to the affine
$m$-plane $x+H$ and passing through~$x$ and~$y$. (If $y$ happens to be
contained in $x+H$, in particular if $y=x$, then we set $1/\rtp (x,y;H)=0$.)

\begin{definition}[\textbf{Global tangent-point curvature}]
\label{def:1.3}
  Assume that $H\colon \Sigma\to G(n,m)$ is an arbitrary map. Set
  \[
  \GTP[\Sigma](x)\equiv\GTP(x)\equiv \GTP(x,H(x)) :=
  \sup_{y\in \Sigma} \frac{1}{\rtp (x,y;H(x))}\, .
  \]
\end{definition}
Of course, the definition of $\GTP\colon \Sigma\to [0,+\infty]$ depends on the choice of $H$. However, we shall often omit the particular map $H$ from the notation, assuming tacitly that a choice of `tangent' planes $\Sigma\ni x\mapsto H(x)\in G(n,m)$ has been fixed. 

\begin{theorem}\label{mainthm}
  Let $0<m<n$ and $\Sigma\in \mathcal{F}(m)$. Assume $p>m$. The following conditions are equivalent:
  \begin{enumerate}
    \renewcommand{\labelenumi}{{\rm (\arabic{enumi})}}
  \item $\Sigma$ is an embedded $W^{2,p}$-submanifold of $\R^n$ without boundary;
  \item $\GMC[\Sigma] \in L^{p}(\Sigma,\H^m)$;
  \item There is a map $H\colon \Sigma\to G(n,m)$ such that for this map
    $$\GTP[\Sigma] \equiv \GTP(\cdot, H(\cdot)) \ \in  \   L^{p}(\Sigma,\H^m).$$
  \end{enumerate}
\end{theorem}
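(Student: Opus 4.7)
\medskip

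\noindent\emph{Proof plan.} The plan is to establish the chain of equivalences by going through the local graph description of $\Sigma$ guaranteed by the Reifenberg flat pieces of the paper's machinery, and then bridging \emph{geometric} integrability and \emph{analytic} Sobolev regularity via a Hajłasz-type characterization of $W^{2,p}$.

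\medskip

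\noindent\textbf{Step 1: $(1)\Rightarrow (2),(3)$.} If $\Sigma$ is an embedded $W^{2,p}$ submanifold without boundary, cover it by finitely many balls in which $\Sigma$ is the graph of a $W^{2,p}$ function $f\colon U\subset\R^m\to\R^{n-m}$ with $\|Df\|_\infty$ small. Writing the points $x,x_1,\dots,x_{m+1}\in\Sigma\cap\Ball^n(x,r)$ as graph points, a second-order Taylor estimate bounds both $K(x,x_1,\dots,x_{m+1})$ and $1/\rtp(x,x_j;T_x\Sigma)$ by the supremum over the graph domain of the difference quotients of $Df$ at scale $r$. Applying the classical pointwise Calderón estimate (or its Hajłasz formulation) $|Df(a)-Df(b)| \lesssim |a-b|\bigl(M|D^2f|(a)+M|D^2f|(b)\bigr)$, we majorise $K_G(x)$ and $1/\rtp(x,\cdot;T_x\Sigma)$ pointwise by $M|D^2f|(\pi x)$, where $\pi$ is the projection to the graph domain. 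Since $|D^2f|\in L^p$ and $p>m>1$, the Hardy--Littlewood maximal function is in $L^p$, giving $\GMC,\GTP\in L^p(\Sigma,\H^m)$.

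\medskip

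\noindent\textbf{Step 2: $(2)$ or $(3)\Rightarrow\ $Reifenberg-flat graph structure.} Assume $\GMC\in L^p(\Sigma,\H^m)$ (the tangent-point case is analogous, using the map $H$). By Ahlfors regularity \eqref{eq:fine:reg}, an averaging argument gives at $\H^m$-a.e.\ $x$ that for every $\varepsilon>0$ there is $r_0(x)>0$ with
\begin{displaymath}
 \fint_{\Sigma\cap\Ball^n(x,r)}\GMC(y)^p\,d\H^m(y) \le \varepsilon r^{p-m}\quad \text{for } r\le r_0(x),
\end{displaymath}
which is a Morrey-type decay. A geometric lemma (cf.\ the analysis of beta numbers for finite-energy sets in \cite{svdm-surfaces,svdm-tp1}) converts the integrated smallness of $K$ on balls into pointwise smallness of $\nbeta_\Sigma(x,r)$: any $(m+1)$-simplex with vertices in $\Sigma\cap\Ball^n(x,r)$ whose volume quantifies distance from an $m$-plane gives a lower bound for $\sup_\Sigma K$, so $\nbeta_\Sigma(x,r)\lesssim (\text{energy integral})^{1/p}$. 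Combined with condition~(ii) in Definition~\ref{def:fine} we obtain $\ntheta_\Sigma(x,r)\to 0$ as $r\to 0$, uniformly on compact subsets of a full-measure set. Hence $\Sigma$ is locally Reifenberg flat with vanishing constant, and the $m$-fineness together with Ahlfors regularity supplies graph patches: near each such $x$, $\Sigma\cap\Ball^n(x,r_1)$ is the graph of a continuous function $f$ with $\operatorname{osc}(Df)$ small.

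\medskip

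\noindent\textbf{Step 3: From geometric energy to $W^{2,p}$ via Hajłasz.} Once $\Sigma$ is locally the graph of a $C^1$ (in fact $C^{1,\tau}$ by Morrey embedding of the beta decay) function $f$, the simplex curvature $K$ (respectively $1/\rtp$) controls second-order differences of $f$ in the following pointwise form: for all $a,b$ in the graph domain,
\begin{equation*}
 |Df(a)-Df(b)| \le C|a-b|\bigl(g(a)+g(b)\bigr),
\end{equation*}
where $g(a):=\GMC^\ast(\pi^{-1}(a))$ is the maximal function along $\Sigma$ of $\GMC$, and similarly with $\GTP$. By the Hajłasz characterization of Sobolev functions in metric measure spaces applied to $Df$, and the $L^p$ boundedness of the maximal operator (valid because $\H^m\mres\Sigma$ is a doubling measure on $\Sigma$ by Ahlfors regularity and upper bounds that follow from Reifenberg flatness), we conclude $f\in W^{2,p}$. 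Embeddedness follows because Reifenberg flatness with vanishing constant forbids self-intersections at every scale, so the local graph charts patch together into a global embedding. This finishes $(2)\Rightarrow(1)$, and $(3)\Rightarrow(1)$ runs in parallel.

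\medskip

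\noindent\textbf{Main obstacle.} The delicate step is Step~2: extracting pointwise smallness of the bilateral beta number $\ntheta_\Sigma$ from merely $L^p$ information on a \emph{supremum-type} integrand, while controlling how the hole-condition (ii) is preserved through the argument. The maximal function of $K$ or $1/\rtp$ along $\Sigma$ is only $L^p$ rather than $L^\infty$, so one cannot hope for uniform Reifenberg flatness; the argument must exploit the Lebesgue differentiation of $\H^m\mres\Sigma$ together with the Ahlfors lower bound to transfer integral decay into geometric decay almost everywhere, and then use the hypothesis $\Sigma\in\F(m)$ to propagate this to full measure-theoretic flatness needed for the graph representation.
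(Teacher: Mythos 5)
Your proposal reproduces the paper's overall architecture (beta-number decay from the energy bound, Reifenberg-flat graph patches via David--Kenig--Toro, Haj\l{}asz characterization of $W^{1,p}$ applied to $Df$), but there are genuine gaps at three points.

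\textbf{Step 1.} The pointwise Calder\'on--Haj\l{}asz estimate $|Df(a)-Df(b)|\lesssim |a-b|\bigl(M|D^2f|(a)+M|D^2f|(b)\bigr)$ does \emph{not} yield the majoration $K_G(x)\lesssim M|D^2f|(\pi x)$. To bound $K(x,x_1,\ldots,x_{m+1})$ one needs a second-order Taylor remainder estimate $\dist(y,x+T_x\Sigma)\lesssim g(x)\,|y-x|^2$ depending only on $x$; feeding the Haj\l{}asz inequality into $\int_0^1[Df(x+t(y-x))-Df(x)](y-x)\,dt$ produces the average of $M|D^2f|$ along the segment from $x$ to $y$, which is not controlled by $M|D^2f|(\pi x)$ alone (think of $|D^2f|$ spiking near $y$). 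The paper circumvents this by applying Morrey's inequality \emph{twice} to $\Psi_x(z)=F(z)-DF(x)(z-x)$ (Lemma~\ref{lem:w2p-beta-est}), which yields $\beta_\Sigma(a,r)\le g(a)\,r$ with $g=\hat{C}\,M(|D^2f|^s)^{1/s}\circ\pi_{\R^m}$ for a fixed $s\in(m,p)$; this is the correct pointwise input.

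\textbf{Step 2.} The claimed Morrey-type decay $\fint_{\Sigma\cap\Ball(x,r)}\GMC^p\le\varepsilon r^{p-m}$ for small $r$ cannot hold at points where $\GMC(x)>0$ (the left side tends to $\GMC(x)^p$ while the right side tends to $0$), and is also not what the argument needs. The beta decay $\beta_\Sigma(x,r)\lesssim (E/A_\Sigma)^{\kappa_i/(p-m)}r^{\kappa_i}$ comes from scaling: any $(\eta,2r)$-voluminous simplex in $\Ball(x,r)$ forces $\GCi\gtrsim \eta^{m+1}/r$ on a ball of radius $\approx\eta^2 r$ around $x$, and Ahlfors regularity (from below) converts this into $E\gtrsim A_\Sigma r^{m-p}\eta^{p(m+1)+2m}$. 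Crucially this gives a \emph{uniform} bound for \emph{all} $x\in\Sigma$ and all $r$, not just $\H^m$-a.e.; uniformity is what lets one invoke Definition~\ref{def:fine}(ii) and Proposition~\ref{prop:dkt-reg}. A.e.\ decay on ``compact subsets of a full-measure set'' is insufficient for Reifenberg flatness with vanishing constant.

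\textbf{Step 3.} Your claim that $\Sigma$ is $C^{1,\tau}$ with $\tau=1-m/p$ ``by Morrey embedding of the beta decay'' is false: the beta decay produces H\"older exponents $\kappa_1=\frac{p-m}{p(m+1)+2m}$ or $\kappa_2=\frac{p-m}{p+m}$, and both are strictly smaller than $\tau$. The paper's Section~3.3 is a nontrivial bootstrap (Lemma~\ref{key} and the iteration around it), which uses slicing, the good/bad-point dichotomy, Lemma~\ref{proj-slab} on tube intersections for $\GTP$, and the secant-plane comparison for $\GMC$, to upgrade $\kappa_i$ to the optimal $\tau$ and, at the same time, to produce the local Morrey-type inequality \eqref{pre-ptwise} that feeds the Haj\l{}asz argument. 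Without this, the Haj\l{}asz inequality $|Df(z_1)-Df(z_2)|\lesssim|z_1-z_2|\bigl(g(z_1)+g(z_2)\bigr)$ with $g=(M\GCi^s)^{1/s}$ has no source. Also missing is the uniform Ahlfors regularity step (Theorem~\ref{thm:UAR}), which is needed to fix a patch size depending only on $n,m,p,E$ rather than on $A_\Sigma$ and $\diam\Sigma$.
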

A quick comment on the equivalence of (1) and (3) should be made
right away: it is a relatively simple exercise to see that for a $C^1$ embedded manifold $\Sigma$ the $L^p$ norm of $\GTP(\cdot, H(\cdot))$ can be finite for \emph{at most} one continuous map
$H\colon\Sigma\to G(n,m)$ -- the one sending every $x\in \Sigma$ to $T_x\Sigma\in G(n,m)$.

Let us also mention a toy case of the equivalence of conditions (1) and (2) 
in the above theorem. For rectifiable curves $\gamma$ in $\R^n$ 
the equivalence of the arc-length parametrization $\Gamma$ of $\gamma$ being injective and in $W^{2,p}$,
and the global curvature of $\gamma$
being in $L^p$ has been proved by the second and third author in \cite{stvdm-MathZ}. To be more precise, let $S_L:=\R/L\Z,$ $L>0$, be the circle
with perimeter $L$, and denote by $\Gamma:S_L\to\R^n$ the
arclength parametrization of a closed rectifiable curve
$\gamma:\S^1\to\R^n$ of length $L$. Then the {\it global radius of curvature
  function} $\rho_G[\gamma]:S_L\to\R$, ; see, e.g., \cite{GMSvdM},   is defined as
\begin{equation}\label{global}
  \rho_G[\gamma](s):=\inf_{\s,\tau\in S_L\setminus\{s\}\atop
    \s\not=\tau}R(\Gamma(s),\Gamma(\s),\Gamma(\tau)),\qquad s\in S_L
\end{equation}
where, again, $R(\cdot,\cdot,\cdot)$ denotes the circumradius of a triangle, and the {\it global curvature}
$\kappa_G [\gamma](s)$ of $\gamma$ is given by
\begin{equation}\label{globalcurvature}
  \kappa_G[\gamma](s):=\frac{1}{\rho_G[\gamma](s)}\, .
\end{equation}
In \cite{stvdm-MathZ} we prove for $p>1$
that $\Gamma\in W^{2,p}(S_L,\R^n)$ and $\Gamma$ is injective (so that $\gamma$ is simple) if and only if $\kappa_G[\gamma]\in L^p$. Examples show that this 
fails for $p=1=\dim\gamma$: There are embedded curves of class
$W^{2,1}$ whose global curvature $\kappa_G$ is {\it not}
in $L^1.$   The first part of the proof (3) $\Rightarrow $ (1) for $m=1$, namely the optimal $C^{1,\tau}$-regularity of curves with finite energy, is modelled on the argument that was used in  \cite{svdm-tpcurves} for a different 
geometric 
curvature energy, namely for $\iint_{\gamma\times\gamma} 1/\rtp^q$.

We conjecture that the implications (1) $\Rightarrow$ (2), (3) of Theorem~\ref{mainthm} fail for $p=m>1$. 

\reallyinvisible{}{\tt\xx SHOULD we mention \cite{svdm-tpcurves} where at least
  part of the implication (3) $\Rightarrow $ (1), namely the optimal
  $C^{1,\tau}$-regularity was shown for $m=1$??\xx}

\begin{remark*}If (2) or (3) holds, then according to Theorem
\ref{mainthm} $\Sigma$ is embedded and locally, for some $R>0$, $\Sigma\cap \Ball^n(x,R)$ is congruent to a graph of a $W^{2,p}$ function $f\colon \R^m\to\R^{n-m}$. Since $p>m$, we also know from a result of A. Calder\'{o}n and A. 
Zygmund (see e.g. \cite[Theorem 1, p. 235]{EG}) that $Df\colon \R^m\to L(\R^m,\R^{n-m})$ is differentiable a.e. in the classic
sense.
\end{remark*}

\begin{remark*}
One can complement Theorem 1.4 by the contribution of S. Blatt and the 
first author \cite{blatt-k} in
the following way.  Suppose that $2\le k\le m+2$ and in Definition~1.2 one takes the supremum only with respect to $(m+2)-k$ points of $\Sigma$, defining the respective curvature $\mathcal{K}_{G,k}$ as a function of $k$-tuples $(x_0,x_1,\ldots,x_{k-1})\in \Sigma^k$.  Suppose that $p>m(k-1)$ and $\Sigma$ is a $C^1$ embedded manifold.
Then, $\mathcal{K}_{G,k}$ is of class $L^p(\Sigma^k,\H^{mk})$ if and only if $\Sigma$ is locally a graph of class $W^{1+s,p}(\R^m,\R^{n-m})$, where $s =
  1-m(k-1)p^{-1}\in (0,1)$.  If $k=m+2$ and $p>m(m+2)$, then the assumption that $\Sigma$ be a
  $C^1$ manifold is not necessary; one can just assume $\Sigma\in \F(m)$.  See
  \cite{blatt-k} for details.  We believe that the characterization of
  \cite{blatt-k} does hold for all $2\le k\le m+2$ without the assumption that
  $\Sigma$ is of class $C^1$. (To prove this, one would have to generalize the
  regularity theory presented in \cite{slawek-phd}  to all curvatures
  $\mathcal{K}_{G,k}$). 

Blatt's preprint \cite{blatt-tp} contains a similar characterization in
terms of fractional Sobolev spaces of those $C^1$ manifolds $\Sigma$ for which the tangent--point energy $\iint_{\Sigma\times\Sigma} 1 / (\rtp)^q $ is finite.
\end{remark*}

\begin{remark*}
  W. Allard, in his classic paper \cite{allard}, develops a regularity theory
  for $m $-dimensional varifolds whose first variation (i.e., the distributional
  counterpart of mean curvature) is in $L^p$ for some $p>m $. His Theorem 8.1
  ascertains that, under mild extra assumptions on the density function of such
  a varifold $V$, an open and dense subset of the support of $\|V\|$ is locally
  a graph of class $C^{1,1-m/p}$.  For $p>m$ Sobolev--Morrey imbedding yields
  $W^{2,p}\subset C^{1,1-m/p}$ and one might na\"{\i}vely wonder if a stronger
  theorem does hold, implying Allard's (qualitative) conclusion just by
  Sobolev--Morrey. Indeed, J.P. Duggan \cite{duggan} proved later an optimal result
  in this direction.  For integral varifolds, $W^{2,p}$-regularity can be
  obtained directly via elliptic regularity theory, see U. 
  Menne \cite[Lemmata
  3.6 and~3.21]{menne}.

  In Allard's case the `lack of holes' is built into his assumption on the first
  variation $\delta V$ of $V$.  Our setting is not so close to 
  PDE theory:
  both `curvatures' are defined in purely geometric terms and in a nonlocal way.
  Here, the `lack of holes' follows, roughly speaking, from a delicate interplay
  between the inequality $\theta(x,r)\lesssim \beta(x,r)$ built into the
  definition of $\F(m)$ and the decay of $\beta(x,r)$ which follows from the
  finiteness of energy. A more detailed account on our strategy of proof here,
  is presented in the next subsection.

  At this stage we do not know for our curvature energies what the situation is
  like in the scale invariant case $p=m$. For two-dimensional integer
  multiplicity varifolds, however (or in the simpler situation of
  $W^{2,2}$-graphs over planar domains)  
  Toro \cite{toro} was able to prove
  the existence of bi-Lipschitz parametrizations. For $m$-dimensional sets Toro
  \cite[eq. (1)]{toro2} established a sufficient condition for the existence
  of bi-Lipschitz parametrizations in terms of $\theta$.  Her condition is
  satisfied, e.g., by S.~Semmes' chord-arc surfaces with small constant, and by
  graphs of functions that are sufficiently well approximated by affine
  functions; see \cite[Section 5]{toro2} for the details.
\end{remark*}

\begin{remark*}
Following the reasoning in \cite[Lemma 7]{stvdm-MathZ} one can easily provide
nontrivial lower bounds for the global tangent-point curvature for hypersurfaces 
($n=m+1$), and also for curves $m=1<n$; see Theorem \ref{thm:1.5}
below. Indeed, setting $E:=\|\GTP[\Sigma]\|_{L^p(\Sigma)}$, where $\Sigma\subset\R^n$ is a 
compact  connected $m$-dimensional $C^1$-submanifold without
boundary, we can
find at least one point $x\in\Sigma$ such that 
$\GTP[\Sigma](x)\le E/(\mathscr{H}^m(\Sigma)^{1/p}),$
since otherwise we had a contradiction via
$$
E=\left(\int_\Sigma\Big(\GTP[\Sigma](x)\Big)^p\,d\mathscr{H}^m(x)\right)^{1/p}>\frac{E}{\mathscr{H^m}(\Sigma)^{1/p}}
\mathscr{H^m}(\Sigma)^{1/p}=E.
$$
Therefore $R:=\inf_{y\in\Sigma} R_{\textnormal{tp}}(x,y,T_x\Sigma)\ge\mathscr{H}^m(\Sigma)^{1/p}/E.$ If there existed
an open ball $\Ball^n(a,R)$ with 
$$
(x+T_x\Sigma)\cap\partial\Ball^n(a,R)=\{x\}
$$
such that $\Sigma\cap\Ball^n(a,R)\not=\emptyset$, then we could find a strictly
smaller sphere tangent to $\Sigma $ in $x$ and containing yet another point 
$y\in\Sigma$ contradicting the definition of $R$.
Hence we have shown that the union of such open balls
\begin{equation}\label{MDef}
M:=\bigcup\{\Ball^n(a,R):\partial\Ball^n(a,R)\cap (x+T_x\Sigma)=\{x\}\}
\end{equation}
contains no point of $\Sigma.$
In other words, $\Sigma$ is a compact  embedded submanifold  without boundary, contained in
$\R^n\setminus M$, and one can ask for the area minimizing submanifold in $\R^n\setminus M$.
In codimension one, i.e., for $m=n-1$,  $\Sigma=\partial \Omega$ for a bounded open set $\Omega\subset \R^n$, and
the union of balls defining $M$ just consists of two such balls,
one in $\Omega$ and one in the unbounded
exterior of $\Sigma$. So, due to the classic
isoperimetric inequality (see, e.g. \cite[Theorem 3.2.43]{federer}) one finds
\begin{eqnarray*}
\mathscr{H}^{n-1}(\Sigma) & \ge &
n\omega_n^{1/n}\mathscr{H}^{n}((\Omega))^{\frac{n-1}{n}}\\
& \ge &
n\omega_n^{1/n}\mathscr{H}^{n}(B(a,R))^{\frac{n-1}{n}}=
\mathscr{H}^{n-1}(\partial\Ball^n(a,R))=n\omega_nR^{n-1}.
\end{eqnarray*}
which by definition of $R$ can be rewritten as
\begin{equation}\label{lower_energy_bound}
\|\GTP[\Sigma]\|_{L^p(\Sigma)}=E\ge (\mathscr{H}^{n-1}(\Sigma))^{\frac 1p -\frac{1}{n-1}}(n\omega_n)^{\frac{1}{n-1}}
\end{equation}
with equality if and only if $\Sigma$ equals a round
sphere.  Hence, we obtain the following simple result. 

\begin{theorem}\label{thm:1.5}
Let $p>0$. Among all compact embedded
$C^1$-hyper\-sur\-faces  with given surface area, the 
round sphere uniquely (up to isometries)
minimizes the
energy $\|\GTP[\Sigma]\|_{L^p(\Sigma,\mathscr{H}^{n-1})}.$ If $p>n-1$, the same holds true for all $(n-1)$-fine sets
$\Sigma\in\F(n-1)$.
\end{theorem}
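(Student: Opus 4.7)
The plan is to handle the case of a closed $C^1$-embedded hypersurface first, and then promote it to general $(n-1)$-fine sets via Theorem~\ref{mainthm}. The essential mechanism is already recorded in the remark preceding the statement, so the work is to organise those observations into a clean proof of the lower bound \eqref{lower_energy_bound}, and then to supply the rigidity argument behind the clause ``uniquely (up to isometries)''.

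For the $C^1$ case I would assume $E:=\|\GTP[\Sigma]\|_{L^p(\Sigma,\mathscr{H}^{n-1})}<\infty$ (otherwise the bound is vacuous) and, for the moment, that $\Sigma$ is connected. A Chebyshev-type averaging produces a point $x_0\in\Sigma$ with $\GTP(x_0)\le E\,\mathscr{H}^{n-1}(\Sigma)^{-1/p}$, since otherwise integration would give $E^p>E^p$. Setting $R:=1/\GTP(x_0)$, the definition of $\rtp$ forces the two open balls of radius $R$ tangent to $x_0+T_{x_0}\Sigma$ at $x_0$ to be disjoint from $\Sigma$. Jordan--Brouwer separation yields a bounded open $\Omega\subset\R^n$ with $\Sigma=\partial\Omega$, and exactly one of those tangent balls, say $\Ball^n(a,R)$, lies in $\Omega$. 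The classical isoperimetric inequality (\cite[Theorem~3.2.43]{federer}) then gives
\[
\mathscr{H}^{n-1}(\Sigma)\ge n\omega_n^{1/n}\,\mathscr{H}^n(\Omega)^{(n-1)/n}\ge n\omega_n^{1/n}\,\mathscr{H}^n(\Ball^n(a,R))^{(n-1)/n}=n\omega_n R^{n-1},
\]
and substituting the lower bound on $R$ yields \eqref{lower_energy_bound}. A direct check on $\Sigma=\partial\Ball^n(0,\rho)$, where every tangent sphere at $x\in\Sigma$ through any other point of $\Sigma$ is $\Sigma$ itself so that $\GTP\equiv 1/\rho$, shows equality is attained and pins down $\rho$ via the area constraint. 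If $\Sigma$ is disconnected, I would apply the above to the component $\Sigma_{i_0}$ containing $x_0$ and use $\mathscr{H}^{n-1}(\Sigma)\ge\mathscr{H}^{n-1}(\Sigma_{i_0})\ge n\omega_n R^{n-1}$; equality then forces only one component to carry area.

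For uniqueness, I would chase the equality case through the chain of inequalities above: sharpness in the isoperimetric inequality forces $\Omega$ to be a ball, and equality in the volume monotonicity together with $\Ball^n(a,R)\subset\Omega$ forces $\Omega=\Ball^n(a,R)$, so $\Sigma=\partial\Ball^n(a,R)$ is a round sphere. This yields the ``if and only if'' clause.

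It remains to treat $\Sigma\in\F(n-1)$ for $p>n-1$. If $E=\infty$ the estimate is trivial; otherwise condition~(3) of Theorem~\ref{mainthm} holds, so $\Sigma$ is a $W^{2,p}$ embedded compact submanifold without boundary, which by Sobolev--Morrey imbedding is of class $C^{1,\tau}$ with $\tau=1-(n-1)/p>0$, bringing us back to the $C^1$ case. The step I expect to be most delicate is verifying that the a priori abstract tangent-plane map $H$ from Definition~\ref{def:1.3}, used to define the finite $\GTP$-energy on $\Sigma$ before any smoothness has been established, coincides $\mathscr{H}^{n-1}$-a.e.\ with the genuine tangent planes of the $C^1$-structure produced by Theorem~\ref{mainthm}; the remark immediately after that theorem, asserting that at most one continuous choice of $H$ can yield a finite $L^p$-norm, is the key input needed to close this gap and reduce everything to the smooth setting.
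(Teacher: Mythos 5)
Your proof is correct and follows essentially the same route as the paper's own argument (given in the remark preceding the theorem): Chebyshev averaging to find a point of small $\GTP$, disjointness of the tangent balls from $\Sigma$ via the definition of $\rtp$, Jordan--Brouwer to get $\Sigma=\partial\Omega$, the isoperimetric inequality, and rigidity in the isoperimetric inequality for uniqueness, with the reduction of the $\F(n-1)$ case to the $C^1$ case via Theorem~\ref{mainthm}. Your explicit handling of the disconnected case and of the a.e.\ identification of the map $H$ with $x\mapsto T_x\Sigma$ fills in two details the paper treats implicitly (its remark assumes connectedness, while the theorem statement does not), but the substance is the same.
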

Similarly, for $m=1$ one concludes {\it that any of those great circles on any of the balls
$\Ball^n(a,R)$ generating $M$ in \eqref{MDef} that are also geodesics on $M$ 
uniquely minimize
$E$ among all closed simple $C^1$-curves $\Sigma\equiv\gamma\subset\R^n\setminus M$,} which
provides the lower bound
\begin{equation}\label{lower_energy_curves}
\|\GTP[\gamma]\|_{L^p(\gamma)}=E\ge 2\pi\mathscr{H}^1(\gamma)^{\frac 1p -1}.
\end{equation}
This is exactly what we found for curves in \cite[Lemma 7 (3.1)]{stvdm-MathZ}, and is also consistent
with \eqref{lower_energy_bound} if $n=2=m+1$. 
\end{remark*}

\subsection{Essential ideas and an outline of the proof.}

This paper grew out of our interest in geometric curvature energies and earlier
related research, cf. \cite{stvdm-MathZ}, \cite{ssvdm-triple},
\cite{svdm-surfaces}, \cite{svdm-tp1} and \cite{slawek-phd}.  While working on
the integral Menger curvature energy of rectifiable curves $\gamma\subset\R^n$
\[
\mathscr{M}_p(\gamma)=\iiint_{\gamma\times\gamma\times\gamma} \frac{1}{R^p(x,y,z)}\, d\H^1(x)\, d\H^1(y)\, d\H^1(z)\, , \qquad p>3,
\]
we realized how slicing can be used to obtain optimal H\"older continuity of
arc-length parametrizations.\footnote{The second and the third author of this
  paper acknowledge with gratitude the stimulating conversations that they had
  in the spring of 2008 with Joan Verdera at CRM in Pisa.  His insight that most
  of the work in \cite{ssvdm-triple} should and could be phrased in the language
  of beta numbers has helped us a lot in our subsequent research.} (The scale
invariant exponent $p=3$ is critical here: polygons have infinite $M_p$-energy
precisely for $p\ge 3$; see S. Scholtes \cite{scholtes} for a proof).

One crucial difference between curves $\gamma$ and  $m$-dimensional sets
$\Sigma$ in $\R^n$ for $m\ge 2$ lies in the distribution of mass in balls on
various scales: If $\gamma$ is a~rectifiable curve and $r< \frac
12\diam\gamma$, then obviously $\H^1 (\gamma\cap \Ball^n(x,r))\ge r$  for
each $x\in\gamma$. For $m>1$ the measure $\H^m (\Sigma \cap \Ball^n(x,r))$ might
be much smaller than $r^m$ due to complicated geometry of $\Sigma$ at
intermediate length scales. In \cite{svdm-surfaces} we have devised a method,
allowing us to obtain estimates of $\H^m (\Sigma \cap \Ball^n(x,r))$ for $m=2$,
$n=3$ and all radii $r<R_0$, with $R_0$ depending only on the energy level of
$\Sigma$ in terms of its integral Menger curvature.  This method has been later
reworked and extended in the subsequent papers \cite{svdm-tp1},
\cite{slawek-phd}, to yield the so-called {\it uniform Ahlfors regularity},
i.e., estimates of the form
\[
\H^m (\Sigma \cap \Ball^n(x,r))\ge \frac 12 \omega_mr^m, \qquad \Foa r<R_0=R_0(\text{energy})\, ,
\]
for other curvature energies and arbitrary $0<m<n$ (to cope with the case of
higher codimension, we used a linking invariant to guarantee that $\Sigma$ has
large projections onto some $m$-dimensional planes). Combining such estimates
for $\H^m (\Sigma \cap \Ball^n(x,r))$ with an extension of ideas from
\cite{ssvdm-triple} we obtained in \cite{svdm-surfaces}, \cite{svdm-tp1} and
\cite{slawek-phd} a series of results, establishing $C^{1,\alpha}$ regularity
for surfaces, or more generally, for a priori non-smooth $m$-dimensional sets
for which certain geometric curvature energies are finite.  Finally, we also
realized that the well-known pointwise characterization of $W^{1,p}$-spaces of
P. Haj\l{}asz \cite{Haj96} is the missing link, allowing us to combine the ideas
from \cite{slawek-phd} and \cite{svdm-tp1} in the present paper in order to
provide with Theorem~\ref{mainthm} a far-reaching, general extension of
\cite[Theorems 1 \& 2]{stvdm-MathZ} from curves to $m$-dimensional manifolds in
$\R^n$.

\medskip

Let us now discuss the plan of proof of Theorem~\ref{mainthm} and outline the
structure of the whole paper.

The easier part is to check that if $\Sigma$ is an embedded compact $W^{2,p}$
manifold without boundary, then conditions (2) and (3) hold.  We work in small
balls $\Ball(x,R)$ centered on $\Sigma$, with $R>0$ chosen so that $\Sigma\cap
\Ball(x,R)$ is a (very flat) graph of a $W^{2,p}$ function $f\colon \Ball^m(x,2R)\to
\R^{n-m}$. Using Morrey's inequality  twice, we first show that
\[
\beta_\Sigma(a,r)\lesssim g(a)r, \qquad a\in \Ball(x,R)\cap\Sigma,\quad 0< r<R\, ,
\]
for a function $g\in L^p$ that is comparable to some maximal function of $|D^2
f|$. Next, working with this estimate of beta numbers on all scales
$r=R/2^k$, $k=0,1,2,\ldots$, we show that in each coordinate patch each of the
global curvatures $\GMC$ and $\GTP$ can be controlled by two terms,
\[
\GMC(a), \text{resp. }\GTP(a)\lesssim  g(a) + C(R)\, 
\]
where $C(R)$ is a harmless term depending only on the size of the patches.  (It
is clear from the definitions that for embedded manifolds one can estimate both
$\GMC$ and $\GTP$ taking into account only the local bending of $\Sigma$ and
working in coordinate patches of fixed size; the effects of self-intersections
are not an issue).  This yields $L^p$-integrability of $\GMC$ and $\GTP$.  We
refer to Section 4 for the details.

The reverse implications require more work. The proofs that (3) or (2) implies
(1) have, roughly speaking, four separate stages.  First, we use energy
estimates to show that if $\|\GMC\|_{L^p}$ or $\|\GTP\|_{L^p}$ are less than
$E^{1/p}$ for some finite constant $E$, then
\[
\beta_\Sigma(x,r) \lesssim \left(\frac{E}{A_\Sigma}\right)^{\kappa/(p-m)} r^{\kappa}\, .
\]
Here $\kappa$ denotes a number in $(0,1-m/p)$, depending only on $m,p$ with
different explicit values for $\GMC$ or $\GTP$, and $A_\Sigma$ is the 
constant from Definition \ref{def:fine}
measuring Ahlfors regularity of $\Sigma$.  By the very definition of $m$-fine
sets, such an estimate implies that the bilateral beta numbers of $\Sigma$ tend
to zero with a speed controlled by $r^{\kappa}$. In particular, $\Sigma$ is
Reifenberg flat with vanishing constant, 
and an application of \cite[Proposition 9.1]{davidkenigtoro}
shows that $\Sigma$ is an embedded manifold of class $C^{1,\kappa}$.  See
Section 3.1 for more details.

Next, we prove the uniform Ahlfors regularity of $\Sigma$, i.e. we show that
\[
\H^m(\Sigma\cap \Ball(x,r)) \ge \frac{1}{2}\H^m(\Ball^m(x,r))
\]
for all radii $r\in (0,R_0)$, where $R_0$ depends \emph{only\/} on the energy
bound $E$ and the parameters $n,m,p$, but not at all on $\Sigma$ itself.  Here,
we rely on methods from our previous papers \cite{slawek-phd} and
\cite{svdm-surfaces,svdm-tp1}.  Roughly speaking, we combine topological
arguments based on the linking invariant with energy estimates to show that for
each $r<R_0=R_0(E,n,m,p)$ the portion of $\Sigma$ in $\Ball^n(x,r)$ has large
projection onto some plane $H=H(r)\in G(n,m)$. See Section 3.2.

(There is a certain freedom in this phase of the proof; it would be possible to
prove uniform Ahlfors regularity first, and estimate the decay of
$\beta_\Sigma(x,r)$ afterwards. This approach has been used in
\cite{svdm-surfaces,svdm-tp1}.)

After the second step we know that in coordinate patches of diameter comparable
to $R_0$ the manifold $\Sigma$ coincides with a graph of a function $f\in
C^{1,\kappa}(\Ball^m,\R^{n-m})$.  The third stage is to bootstrap the H\"{o}lder
exponent $\kappa$ to the optimal $\tau=1-m/p>\kappa$ for both global curvatures
$\GMC$ and $\GTP$.  This is achieved by an iterative argument which uses
slicing: If the integral of the global curvature to the power $p$ over a ball is
not too large, then this global curvature itself cannot be too large on a
substantial set of \emph{good points\/} in that ball. Geometric arguments based
on the definition of the global curvature functions $\GMC$ and $\GTP$ show that
$|D f(x)-D f(y)|\lesssim |x-y|^\tau$ on the set of good points. It turns out
that there are plenty of good points at all scales, and in the limit we obtain a
similar H\"{o}lder estimate on the whole domain of $f$. See Section~3.3.

The fourth and last step is to combine the $C^{1,\tau}$-estimates with a
pointwise characterization of first order Sobolev spaces obtained by
Haj\l{}asz \cite{Haj96}. The idea is very simple. Namely, the bootstrap reasoning in the
third stage of the proof (Section 3.3) yields the following, e.g. for the global
Menger curvature $\GMC$: On a scale $R_1\approx R_0$, the intersection $\Sigma
\cap \Ball^n(a,R_1)$ coincides with a flat graph of a function $f\colon
P\simeq\R^m\to\R^{n-m}\simeq P^\perp$, with
\[
|Df(x)-D f(y)|\lesssim \biggl(\int_{\Ball^m(\frac{x+y}2, 5|x-y|)} \GMC\bigl((\xi,f(\xi))\bigr)^p\, d\xi
\biggr)^{1/p} |x-y|^\tau\, 
\]
for $\tau =1-m/p$. 
Such an inequality is true for \emph{every} $p>m$ so we can easily fix
a number $p'\in (m,p)$ and show that
\begin{equation}
  \label{prehajlasz}
  |D f(x)-D f(y)|\lesssim \bigl(M(x)+M(y)\bigr) |x-y|\, ,
\end{equation}
where $M(\cdot)^{p'}$ is the Hardy--Littlewood maximal function of the global
curvature. 
Since $p/p'>1$, an application of the Hardy--Littlewood
maximal theorem yields $M^{p'}\in L^{p/p'}$, or, equivalently, $M\in L^p$. Thus,
by the well known result of Haj\l{}asz (see Section 2.3), \eqref{prehajlasz}
implies that $D f\in W^{1,p}$. In fact, the $L^p$ norm of $D^2f$ is controlled
by a constant times the $L^p$-norm of the global Menger curvature $\GMC$.  An
analogous argument works for the global tangent-point curvature function $\GTP$.
This concludes the whole proof; see Section 3.4.

For each of the global curvatures $\mathcal{K}^{(i)}_G$, there are some
technical variations in that scheme; here and there we need to adjust an
argument to one of them. However, the overall plan is the same in both cases.

The  paper is organized as follows. In Section~2, we gather some
preliminaries  from linear algebra and some elementary facts about
simplices, introduce some specific notation, and list some
auxiliary results 
with references to existing literature. Section~3 forms the
bulk of the paper. Here, following the sketch given above, we prove that $L^p$
bounds for (either of) the global curvatures imply that $\Sigma$ is an embedded
manifold with local graph representations of class $W^{2,p}$. Finally, in
Section~4 we prove the reverse implications, concluding the whole proof of
Theorem~\ref{mainthm}.

\medskip\noindent\textbf{Acknowledgement.} The authors are grateful to the anonymous referee for her/his careful reading of this paper and the suggestions which have helped to improve the presentation of our work.

\section{Preliminaries}
\label{sec:2}

\subsection{The Grassmannian}

\def\red{(\rho,\eps,\delta)}

In this paragraph we gather a few elementary facts about the angular metric
$\dgras(\cdot,\cdot)$ on the Grassmannian $G(n,m)$ of $m$-dimensional linear
subspaces\footnote{Formally, $G(n,m)$ is defined as the homogeneous space
  \begin{displaymath}
    G(n,m) := O(n) / (O(m) \times O(n-m)) \,,
  \end{displaymath}
  where $O(n)$ is the orthogonal group; see e.g. A. 
  Hatcher's book~\cite[Section
  4.2, Examples 4.53, 4.54 and 4.55]{hatcher} for the reference. Thus $G(n,m)$
  could be treated as a topological space with the standard quotient
  topology.  Instead, we work with the angular metric $\dgras(\cdot,\cdot)$,
  see Definition \ref{def:grasmetric}.} of $\R^n$.

Here is a summary: for two $m$-dimensional linear subspaces
\[
U = \mathrm{span}\, \{ u_1, \ldots, u_m \}
\qquad\text{and}\qquad
V = \mathrm{span}\, \{ v_1, \ldots, v_m \}
\]
in $\R^n$ such that the bases $(u_1,\ldots,u_m)$, $(v_1,\ldots,v_m)$ are roughly
orthonormal and such that $|u_i - v_i| \le \varepsilon$, we have the estimate
$\dgras(U,V) \lesssim \varepsilon$. This will become especially useful in
Section~\ref{bootstrap}.


For $U\in G(n,m)$ we write $\pi_U$ to denote the orthogonal projection of $\R^n$
onto $U$ and we set $Q_U=\Id_{\R^n}-\pi_U=\pi_{U^\perp}$, where
$\Id_{\R^n}:\R^n\to\R^n$ denotes the identity mapping.
\begin{definition}\label{def:grasmetric}
  Let $U,V \in G(n,m)$. We set
  \begin{displaymath}
    \dgras(U,V) := \| \pi_U - \pi_V \| = \sup_{w \in \S^{n-1}} | \pi_U(w) - \pi_V(w) | \,.
  \end{displaymath}
\end{definition}
The function $\dgras(\cdot,\cdot)$ defines a metric on the Grassmannian
$G(n,m)$. The topology induced by this metric agrees with the standard quotient
topology of $G(n,m)$.  We list several properties of $\dgras$ below. They
will become useful for H\"{o}lder estimates of the graph parameterizations of
$\Sigma$ in Section~\ref{bootstrap}. The proofs are elementary and we omit them
here.

\begin{remark*}
   Notice that
  \begin{displaymath}
    \dgras(U,V) = \| \pi_U - \pi_V \| = \| \Id_{\R^n} - Q_U - (\Id_{\R^n} - Q_V) \| = \| Q_V - Q_U \| \,.
  \end{displaymath}
\end{remark*}

\begin{proposition}[Lemma 2.2 in \cite{svdm-tp1}]
  \label{prop:close-bases}
  If the spaces $U,V \in G(n,m)$ have orthonormal bases $(e_1,\ldots,e_m)$ and
  $(f_1,\ldots,f_m)$, respectively, and if $|e_i - f_i| \le \vartheta$ for $i
  = 1,\ldots,m$, then $\dgras(U,V) \le 2m\vartheta$.
\end{proposition}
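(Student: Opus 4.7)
The plan is to work directly with the formula $\dgras(U,V) = \sup_{w\in \S^{n-1}}|\pi_U(w)-\pi_V(w)|$ and estimate the supremand by expanding each projection in the respective orthonormal basis. Since $(e_1,\ldots,e_m)$ spans $U$ orthonormally and $(f_1,\ldots,f_m)$ spans $V$ orthonormally, one has for every $w\in \R^n$
\begin{displaymath}
  \pi_U(w)-\pi_V(w) \;=\; \sum_{i=1}^m \bigl( \langle w,e_i\rangle e_i - \langle w,f_i\rangle f_i \bigr).
\end{displaymath}

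The key algebraic observation, and effectively the only nontrivial idea in the argument, is the telescoping identity
\begin{displaymath}
  \langle w,e_i\rangle e_i - \langle w,f_i\rangle f_i \;=\; \langle w, e_i - f_i\rangle\, e_i \;+\; \langle w, f_i\rangle\,(e_i - f_i),
\end{displaymath}
which separates the difference into a term where only the basis vectors differ inside the inner product, and a term where only the outer basis vectors differ. For a unit vector $w$, the Cauchy--Schwarz inequality together with $|e_i|=|f_i|=1$ bounds each summand by $|e_i-f_i|+|e_i-f_i|\le 2\vartheta$. Summing over $i=1,\ldots,m$ and taking the supremum over $w\in \S^{n-1}$ yields $\dgras(U,V)\le 2m\vartheta$, as required.

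There is essentially no obstacle here; the statement is a direct linear-algebra computation. The only point to be careful about is not to mistakenly assume that $(e_i)$ and $(f_i)$ are in ``close'' positions relative to each other beyond the hypothesis $|e_i-f_i|\le \vartheta$---in particular, one should not try to pass through an orthonormalisation step, because the given bases are already orthonormal; the telescoping identity above is precisely what avoids any such maneuver. The constant $2m$ obtained this way is not claimed to be sharp, which is consistent with the conclusion of the proposition.
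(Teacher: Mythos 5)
Your argument is correct: the expansion of each orthogonal projection in the respective orthonormal basis, the telescoping identity
\begin{displaymath}
  \langle w,e_i\rangle e_i - \langle w,f_i\rangle f_i = \langle w, e_i - f_i\rangle e_i + \langle w, f_i\rangle (e_i - f_i),
\end{displaymath}
and Cauchy--Schwarz together give the bound $2\vartheta$ per index, hence $2m\vartheta$ after summing. The paper itself gives no proof here (the proposition is cited as Lemma 2.2 of~\cite{svdm-tp1}), and your direct computation is the standard route for this estimate; nothing is missing.
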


\begin{definition}
  \label{ortho_basis}
  Let $V \in G(n,m)$ and let $(v_1,\ldots,v_m)$ be a basis of $V$. Fix some
  radius $\rho > 0$ and two constants $\varepsilon \in (0,1)$ and $\delta \in
  (0,1)$. We say that $(v_1,\ldots,v_m)$ is a \emph{$\red$-basis} if
  \begin{align*}
    (1-\varepsilon) \rho \le |v_i| &\le (1+\varepsilon) \rho \quad \text{for } i = 1, \ldots, m\\ 
    \text{and} \quad
    |\langle v_i, v_j \rangle| &\le \delta \rho^2 \quad \text{for } i \ne j \,.
  \end{align*}
  Specifically, a $(\rho,0,0)$-basis will be called {\it ortho-$\rho$-normal.}
\end{definition}

\begin{proposition}
  \label{prop:gs-red}
  Let $\rho > 0$, $\varepsilon \in (0,1/2)$ and $\delta \in (0,1)$ be some
  constants. Let $(v_1,\ldots,v_m)$ be a $\red$-basis of $V \in G(n,m)$. Then
  there exist an ortho-$\rho$-normal-basis $(\hat{v}_1,\ldots,\hat{v}_m)$ of $V$
  and a constant $C_2 = C_2(m)$ such that
  \begin{align*}
    |v_i - \hat{v}_i| \le (\varepsilon + C_2 \delta) \rho
    \quad \text{for } i = 1,\ldots,m \,.
  \end{align*}
\end{proposition}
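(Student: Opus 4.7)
The plan is to combine a length normalization with Gram--Schmidt orthogonalization, carefully tracking errors so that the coefficient of $\varepsilon$ in the final estimate stays equal to~$1$.

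\emph{Step 1 (normalization).} Define $\tilde v_i := (\rho/|v_i|)\, v_i \in V$. Since $(1-\varepsilon)\rho \le |v_i| \le (1+\varepsilon)\rho$, one has $|v_i-\tilde v_i| = \bigl||v_i|-\rho\bigr| \le \varepsilon\rho$ and $|\tilde v_i| = \rho$, while for $i \ne j$
\[
|\langle \tilde v_i, \tilde v_j \rangle| = \tfrac{\rho^2}{|v_i|\,|v_j|}\, |\langle v_i,v_j\rangle| \le (1-\varepsilon)^{-2}\, \delta\rho^2 \le 4\delta\rho^2,
\]
using $\varepsilon < 1/2$. Thus the $\tilde v_i$ have length exactly $\rho$ and pairwise inner products of order $\delta\rho^2$.

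\emph{Step 2 (Gram--Schmidt).} Apply the standard Gram--Schmidt procedure to $(\tilde v_1,\ldots,\tilde v_m)$: set $\hat u_1 := \tilde v_1$ and, for $i \ge 2$,
\[
\hat u_i := \tilde v_i - \sum_{j=1}^{i-1} \frac{\langle \tilde v_i, \hat u_j\rangle}{|\hat u_j|^2}\, \hat u_j.
\]
By induction on $i$ I will show that, provided $\delta \le \delta_0(m)$ for a suitable threshold, there is a constant $C'(m)$ with $|\hat u_i - \tilde v_i| \le C'(m)\, \delta \rho$ and $|\hat u_i| \ge \rho/2$. The induction step splits $\langle \tilde v_i, \hat u_j \rangle = \langle \tilde v_i,\tilde v_j\rangle + \langle \tilde v_i, \hat u_j - \tilde v_j\rangle$: the first term is controlled by $4\delta\rho^2$ from Step~1, the second by $2C'\delta\rho^2$ via the inductive hypothesis, and dividing by $|\hat u_j|^2 \ge \rho^2/4$ keeps every correction proportional to $\delta$. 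Finally set $\hat v_i := (\rho/|\hat u_i|)\, \hat u_i$; since each $\hat u_i$ lies in $V = \lin\{v_1,\dots,v_m\}$, these produce an ortho-$\rho$-normal basis of $V$. Moreover $\bigl||\hat u_i| - \rho\bigr| \le |\hat u_i - \tilde v_i| \le C'\delta\rho$ implies that the last normalization shifts each vector by at most $C'\delta\rho$, so combining with Step~1 yields
\[
|v_i - \hat v_i| \le |v_i - \tilde v_i| + |\tilde v_i - \hat u_i| + |\hat u_i - \hat v_i| \le \varepsilon\rho + 2C'(m)\, \delta\rho,
\]
and one may take $C_2(m) := 2C'(m)$. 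In the degenerate regime $\delta > \delta_0(m)$, I pick any ortho-$\rho$-normal basis of $V$ and use the trivial estimate $|v_i - \hat v_i| \le (1+\varepsilon)\rho + \rho \le 3\rho$, enlarging $C_2(m)$ if necessary so that $C_2(m)\,\delta_0(m) \ge 3$ makes the bound automatic.

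There is no genuine obstacle: the whole argument is elementary linear algebra. The only subtlety is to avoid mixing $\varepsilon$ and $\delta$ into a single combined constant of the form $C(m)(\varepsilon+\delta)$; separating the length normalization (Step~1) from the orthogonalization (Step~2) is exactly what ensures that $\varepsilon$ appears in the final bound with coefficient~$1$, as required.
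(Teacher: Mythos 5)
Your approach is the same as the paper's in all essentials: normalize the vectors to length $\rho$, run Gram--Schmidt, and track the error by an inner-product splitting. The paper also normalizes first (to unit length after rescaling $\rho=1$), then uses the already-unit vectors $\hat v_i$ in the recursion $f_k = w_k - \sum_{i<k}\langle w_k,\hat v_i\rangle\hat v_i$, so it never has to divide by $|\hat u_j|^2$ and therefore needs no threshold $\delta_0(m)$.

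The one real gap is in the induction of Step 2. You assert that a \emph{single uniform} constant $C'(m)$ closes the induction, but look at what your own splitting produces. Writing $a_i := |\hat u_i - \tilde v_i|/\rho$ and using $|\langle\tilde v_i,\hat u_j\rangle| \le 4\delta\rho^2 + \rho|\hat u_j - \tilde v_j|$ together with $|\hat u_j| \ge \rho/2$, you get
\begin{displaymath}
  a_i \;\le\; \sum_{j<i}\frac{|\langle\tilde v_i,\hat u_j\rangle|}{|\hat u_j|}\,\frac1\rho
  \;\le\; 8(i-1)\delta \;+\; 2\sum_{j<i} a_j\,.
\end{displaymath}
If you feed in $a_j\le C'\delta$ for $j<i$, the conclusion is $a_i\le 8(m-1)\delta + 2(m-1)C'\delta$, and requiring this to be $\le C'\delta$ forces $2(m-1)(4+C')\le C'$, which is impossible once $m\ge 2$ because the coefficient of $C'$ exceeds $1$. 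Shrinking $\delta$ to $\delta_0(m)$ does not repair this: $\delta$ is a common factor on both sides and cancels. What $\delta_0(m)$ \emph{does} do is guarantee $|\hat u_i|\ge\rho/2$ a posteriori, once the $a_i$ bounds are in hand.

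The fix is standard and is exactly what the paper carries out: allow the constants to grow with the index. Define $c_1=0$ and $c_i := 8(i-1) + 2\sum_{j<i}c_j$; then $a_i \le c_i\delta$ by induction (the inner-product split now closes, because you feed in $c_j$ and conclude $c_i$, a genuinely larger constant), and you finish by setting $C'(m):=c_m$, which is finite but exponentially large in $m$ (the paper's explicit solution of its analogous recursion $a_k \le 4\delta(k-1)+2\sum_{i<k}a_i$ exhibits the $3^i$ growth). Once $C'(m)$ is fixed, set $\delta_0(m):=1/(2C'(m))$ to obtain $|\hat u_i|\ge\rho/2$. With this repair, and keeping your separate trivial handling of $\delta>\delta_0(m)$, the proof goes through and matches the paper's bound $|v_i-\hat v_i|\le(\varepsilon+C_2(m)\delta)\rho$.
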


\begin{proof}
  By scaling we may assume that $\rho=1$.
  \heikodetail{
    \bigskip
    
    If $\rho\not=1$ then scale $v_i/\rho$, find from the following proof an ONB
    $\{e_i\}$ of $V$ such that $|v_i/\rho-e_i|\le\epsilon+C_2(m)$ and then use
    $\rho\cdot e_i$ as desired $\hat{v}_i$.
    
    \bigskip
  }
  Define $w_i:=v_i/|v_i|$ for $i=1,\ldots,m$, $f_1:=w_1$, $\hat{v}_1:=w_1,$ and
  then recursively
  $$
  f_k:=w_k-\sum_{i=1}^{k-1}\langle w_k,\hat{v}_i\rangle \hat{v}_i,
  \qquad\textnormal{and}\quad \hat{v}_k:=f_k/|f_k|\quad\textnormal{
    for $k=1,\ldots,m$},
  $$
  and observe that $|w_i-v_i| = |1-|v_i|| \le \epsilon$  
  and $|\langle w_i,w_j\rangle| \le \delta/(1-\epsilon)^2 < 4\delta$ 
  for all $i,j=1,\ldots,m$, and in addition, 
  $V=\Span\{w_1,\ldots,w_m\}
  =\Span\{\hat{v}_1,\ldots,\hat{v}_m\}$ by construction.
  Notice that
  $||f_k|-1|=||f_k|-|w_k||\le|f_k-w_k|$, and therefore
  $$
  |f_k-\hat{v}_k|=||f_k|-1|\le|f_k-w_k|
  $$
  so that by 
  $$
  |v_k-\hat{v}_k|\le |v_k-w_k|+|w_k-f_k|+|f_k-\hat{v}_k|\le
  \epsilon+2|f_k-w_k|
  $$
  the main task turns out to be to estimate $a_k:=|f_k-w_k|$ for $k=1,\ldots,m$,
  where we get immediately $a_1=0$ by definition. If one estimates
  \begin{align*}
    a_k &\le \sum_{i=1}^{k-1}| \langle w_k,w_i \rangle| + \sum_{i=1}^{k-1}|w_i-\hat{v}_i| \\
    &\le 4\delta(k-1)+\sum_{i=1}^{k-1}(a_i+|f_i-\hat{v}_i|)\\
    &\le 4\delta(k-1)+2\sum_{i=1}^{k-1}a_i,
  \end{align*}
  one can prove by induction that
  \begin{displaymath}
    a_k \le 4\delta \Big[ (k-1)+2\sum_{i=0}^l3^i(k-i-2) \Big]
    + 2 \cdot 3^{l+1} \sum_{i=1}^{k-l-2} a_i
    \quad\text{for all } l=0,\ldots,k-3.
  \end{displaymath}
  Specifically for $l=k-3$ we obtain
  $$
  a_k \le 4\delta \Big[(k-1)+2\sum_{i=0}^{k-3}3^i(k-i-2)\Big],
  $$
  and therefore, for all $k=1,\ldots,m,$
  \begin{displaymath}
    |v_k-\hat{v}_k| \le \varepsilon + 8\delta\Big[(m-1) +
    2\sum_{i=0}^{m-3}3^i(m-i-2)\Big] =: \varepsilon + C_2(m)\delta \,.
    \qedhere
  \end{displaymath}
\end{proof}

\begin{proposition}
  \label{prop:dist-ang}
  Let $U,V \in G(n,m)$ and let $(e_1,\ldots,e_m)$ be some orthonormal basis of
  $V$. Assume that for each $i = 1,\ldots,m$ we have the estimate $\dist(e_i,U)
  = |Q_U(e_i)| \le \vartheta$ for some $\vartheta \in (0,1/\sqrt{2})$. Then
  there exists a constant $C_3 = C_3(m)$ such that
  \begin{displaymath}
    \dgras(U,V) \le C_3 \vartheta \,.
  \end{displaymath}
\end{proposition}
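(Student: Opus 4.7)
My plan is to project the orthonormal basis $(e_1,\ldots,e_m)$ of $V$ into $U$, show the resulting vectors form a near-orthonormal basis of $U$, straighten them via Proposition~\ref{prop:gs-red}, and conclude by Proposition~\ref{prop:close-bases}.

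First I would set $v_i := \pi_U(e_i) \in U$. From the orthogonal decomposition $e_i = v_i + Q_U(e_i)$ it follows that $|v_i - e_i| = |Q_U(e_i)| \le \vartheta$ and $|v_i|^2 = 1 - |Q_U(e_i)|^2 \in [1-\vartheta^2,1]$. For $i\ne j$, using $\langle e_i,e_j\rangle = 0$ together with $\pi_U(e_k) \perp Q_U(e_\ell)$, a direct computation gives
\begin{equation*}
\langle v_i, v_j\rangle \;=\; \langle e_i - Q_U(e_i),\, e_j - Q_U(e_j)\rangle \;=\; -\langle Q_U(e_i), Q_U(e_j)\rangle,
\end{equation*}
so $|\langle v_i,v_j\rangle| \le \vartheta^2$. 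Hence $(v_1,\ldots,v_m)$ behaves like a $(1,\vartheta^2,\vartheta^2)$-basis of $U$ in the sense of Definition~\ref{ortho_basis}, provided linear independence holds; note that the standing hypothesis $\vartheta < 1/\sqrt 2$ ensures $\vartheta^2 < 1/2$, which is exactly what is required to apply Proposition~\ref{prop:gs-red}.

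The main technical nuisance will be the linear independence of $(v_1,\ldots,v_m)$. Any unit $v = \sum a_i e_i \in V \cap U^\perp$ satisfies $v = Q_U(v) = \sum a_i Q_U(e_i)$, whence $1 \le \sqrt m\,\vartheta$ by Cauchy--Schwarz. Therefore $(v_1,\ldots,v_m)$ spans $U$ as soon as $\vartheta < 1/\sqrt m$. For the residual range $\vartheta \in [1/\sqrt m,\, 1/\sqrt 2)$ the crude a priori bound $\dgras(U,V) \le \|\pi_U\| + \|\pi_V\| \le 2 \le 2\sqrt m\,\vartheta$ already yields the claim after adjusting the constant, so from this point on I may restrict attention to the range $\vartheta < 1/\sqrt m$, where the $v_i$ are an honest basis of $U$.

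With linear independence secured, Proposition~\ref{prop:gs-red} with $\rho = 1$, $\varepsilon = 1 - \sqrt{1-\vartheta^2} \le \vartheta^2$, and $\delta = \vartheta^2$ produces an orthonormal basis $(\hat v_1,\ldots,\hat v_m)$ of $U$ with $|v_i - \hat v_i| \le (1 + C_2(m))\,\vartheta^2$. Combined with $|e_i - v_i| \le \vartheta$ and $\vartheta^2 \le \vartheta/\sqrt 2$ this gives $|e_i - \hat v_i| \le \bigl(1 + (1+C_2(m))/\sqrt 2\bigr)\vartheta$. A final application of Proposition~\ref{prop:close-bases} to the orthonormal bases $(e_i)$ of $V$ and $(\hat v_i)$ of $U$ then delivers $\dgras(U,V) \le C_3(m)\,\vartheta$ with $C_3(m) = 2m\bigl(1 + (1+C_2(m))/\sqrt 2\bigr)$, completing the argument.
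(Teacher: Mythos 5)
Your proposal follows the same route as the paper's proof: set $v_i := \pi_U(e_i)$, use $\langle e_i,e_j\rangle = 0$ and $|Q_U(e_i)|\le\vartheta$ to show that $(v_1,\ldots,v_m)$ is a $(1,\vartheta^2,\vartheta^2)$-basis of $U$, straighten it via Proposition~\ref{prop:gs-red}, and conclude by Proposition~\ref{prop:close-bases}. The estimates and the resulting constant are essentially the ones in the paper (the paper simply bounds $\vartheta^2<\vartheta$ to obtain $C_3(m)=2m(2+C_2(m))$, while you use $\vartheta^2\le\vartheta/\sqrt 2$ and get a marginally tighter constant).

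The one place where you go beyond the paper is the verification that $(v_1,\ldots,v_m)$ actually is a basis of $U$, which is required before Proposition~\ref{prop:gs-red} can be invoked. The paper takes this for granted, but as you observe it is not automatic from $\vartheta<1/\sqrt 2$ once $m\ge 3$: e.g.\ with $V=\R^3\times\{0\}\subset\R^4$ and $U=w^\perp$ for $w=\tfrac{1}{\sqrt 3}(1,1,1,0)$ one has $|Q_U(e_i)|=1/\sqrt 3<1/\sqrt 2$ for $i=1,2,3$, yet $\pi_U(e_1),\pi_U(e_2),\pi_U(e_3)$ are linearly dependent. Your split into $\vartheta<1/\sqrt m$ (where $V\cap U^\perp=\{0\}$ forces $\pi_U|_V$ injective) and the residual range $\vartheta\in[1/\sqrt m,1/\sqrt 2)$ (handled by the trivial bound $\dgras(U,V)\le 2\le 2\sqrt m\,\vartheta$) closes this gap cleanly; modulo taking the maximum of the two constants, your write-up is actually more careful than the paper's at this point. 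In the paper's own applications $\vartheta$ is always taken small, so the gap is harmless there, but your proof is complete as a standalone statement.
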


\begin{proof}
  Set $u_i := \pi_U(e_i)$. For each $i = 1, \ldots, m$ we have $|Q_U(e_i)| \le
  \vartheta$, so
  \begin{align}
    |u_i - e_i| &= |Q_U(e_i)| \le \vartheta
    \quad \text{hence} \notag \\
    \label{est:ui-len}
    1 - \vartheta^2 < \sqrt{1 - \vartheta^2} &\le |u_i| \le 1 < 1 + \vartheta^2
    \quad \text{for } i = 1, \ldots, m \,.
  \end{align}
  For any $i \ne j$ the vectors $e_i$ and $e_j$ are orthogonal, hence
  \begin{align*}
    0 = \langle e_i, e_j \rangle 
    &= \langle \pi_U(e_i) + Q_U(e_i), \pi_U(e_j) + Q_U(e_j) \rangle \\
    &= \langle \pi_U(e_i), \pi_U(e_j) \rangle + \langle Q_U(e_i), Q_U(e_j) \rangle \,.
  \end{align*}
  Therefore
  \begin{equation}
    \label{est:uiuj-ang}
    |\langle u_i, u_j \rangle| 
    = | \langle Q_U(e_i), Q_U(e_j) \rangle | 
    \le |Q_U(e_i)| |Q_U(e_j)| \le \vartheta^2 \,.
  \end{equation}
  Estimates \eqref{est:ui-len} and \eqref{est:uiuj-ang} show that
  $(u_1,\ldots,u_m)$ is a $\red$-basis of $U$ with constants $\rho = 1$,
  $\varepsilon = \vartheta^2$ and $\delta = \vartheta^2$. Let $(f_1,\ldots,f_m)$
  be the orthonormal basis of $U$ arising from $(u_1,\ldots,u_m)$ by means of
  Proposition~\ref{prop:gs-red}, so that we obtain
  \begin{displaymath}
    |f_i - e_i| \le |f_i - u_i| + |u_i - e_i|
    \le (1+C_2) \vartheta^2 + \vartheta \,.
  \end{displaymath}
  Using Proposition~\ref{prop:close-bases} and the fact that $\vartheta^2 <
  \vartheta < 1$ we finally get
  \begin{displaymath}
    \dgras(U,V) \le 2 m ((1+C_2) \vartheta^2 + \vartheta) 
    \le 2 m (1+C_2 + 1) \vartheta \,.
  \end{displaymath}
  Now we can set $C_3 = C_3(m) := 2 m (1+C_2(m) + 1) = 2m(2+C_2(m))$.
\end{proof}

\begin{proposition}
  \label{prop:red-ang}
  Let $(v_1,\ldots,v_m)$ be a $\red$-basis of $V \in G(n,m)$ with constants
  $\rho > 0$, $\varepsilon \in (0,1/2)$ and $\delta \in (0,1)$. Let
  $(u_1,\ldots,u_m)$ be some basis of $U \in G(n,m)$, such that $|u_i - v_i| \le
  \vartheta \rho$ for some $\vartheta \in (0,\tfrac{1}{\sqrt{2}}-\tfrac 14)$ and
  for each $i = 1,\ldots,m$. Furthermore, let us assume that
  \begin{equation}
    \label{cond:eps-del}
    C_3 (\varepsilon + C_2 \delta) < 1/2 \,.
  \end{equation}
  Then there exists a constant $C_4 =
  C_4(m,\varepsilon,\delta)$ such that
  \begin{displaymath}
    \dgras(U,V) \le C_4 \vartheta \,.
  \end{displaymath}
\end{proposition}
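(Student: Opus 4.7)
The strategy is to reduce this proposition to Proposition~\ref{prop:dist-ang} by producing an orthonormal basis of $V$ whose vectors are all within distance $\lesssim\vartheta\rho$ of $U$. The naive idea — orthonormalize $(v_1,\ldots,v_m)$ via Proposition~\ref{prop:gs-red} and compare $\hat v_i$ directly to $u_i$ — fails because it yields a bound of order $(\varepsilon+C_2\delta+\vartheta)\rho$, which is not linear in $\vartheta$. The correct idea is to exploit that $u_j\in U$ means $Q_U(u_j)=0$, so that $|Q_U(v_j)|=|Q_U(v_j-u_j)|\le|v_j-u_j|\le\vartheta\rho$ for each $j$. Any linear combination of the $v_j$'s therefore lies within distance $\lesssim\vartheta\rho$ of $U$, provided its coefficients are controlled.

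First, I would apply Proposition~\ref{prop:gs-red} to obtain an ortho-$\rho$-normal basis $(\hat v_1,\ldots,\hat v_m)$ of $V$ satisfying $|v_i-\hat v_i|\le(\varepsilon+C_2\delta)\rho$. Since both $(v_1,\ldots,v_m)$ and $(\hat v_1,\ldots,\hat v_m)$ are bases of $V$, write $\hat v_i=\sum_j a_{ij}v_j$. To bound the coefficients $a_{ij}$ independently of $\vartheta$, I would use the Gram matrix $M_{jk}=\langle v_j,v_k\rangle$. From the definition of a $\red$-basis one has $M=\rho^2 I+\rho^2 E$ with $|E_{jj}|\le 2\varepsilon+\varepsilon^2$ and $|E_{jk}|\le\delta$ for $j\ne k$, so $\|E\|_{\mathrm{op}}\le c(m)(\varepsilon+\delta)$. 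The condition $C_3(\varepsilon+C_2\delta)<1/2$ (which entails smallness of $\varepsilon$ and $\delta$ in terms of $m$, since $C_3$ depends only on $m$) guarantees $\|E\|_{\mathrm{op}}<1$, so $M$ is invertible via a Neumann series. Computing $\langle\hat v_i,v_k\rangle=\rho^2\delta_{ik}+\langle\hat v_i,v_k-\hat v_k\rangle$ gives $|\langle\hat v_i,v_k\rangle-\rho^2\delta_{ik}|\le(\varepsilon+C_2\delta)\rho^2$, hence $a_{ij}=\sum_k\langle\hat v_i,v_k\rangle(M^{-1})_{kj}$ satisfies $|a_{ij}|\le c'(m,\varepsilon,\delta)$.

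Now set $e_i:=\hat v_i/\rho$, so $(e_1,\ldots,e_m)$ is an orthonormal basis of $V$. By the observation at the start,
\[
\dist(e_i,U)=|Q_U(e_i)|=\Bigl|\sum_j a_{ij}\rho^{-1}Q_U(v_j)\Bigr|\le\sum_j|a_{ij}|\,\vartheta\le mc'(m,\varepsilon,\delta)\vartheta\,.
\]
Provided the right-hand side is smaller than $1/\sqrt 2$ — which follows from the upper bound $\vartheta<\tfrac1{\sqrt2}-\tfrac14$ together with the smallness condition $C_3(\varepsilon+C_2\delta)<1/2$ after tracing the constants — one applies Proposition~\ref{prop:dist-ang} to the orthonormal basis $(e_1,\ldots,e_m)$ of $V$ and obtains
\[
\dgras(U,V)\le C_3\cdot mc'(m,\varepsilon,\delta)\vartheta=:C_4(m,\varepsilon,\delta)\vartheta\,.
\]

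\textbf{Main obstacle.} The only nontrivial point is the uniform bound $|a_{ij}|\le c'(m,\varepsilon,\delta)$, which requires inverting the perturbed Gram matrix $\rho^{-2}M=I+E$. This is where the hypothesis $C_3(\varepsilon+C_2\delta)<1/2$ enters: it implicitly forces $\|E\|_{\mathrm{op}}<1$ (for the relevant range of $\varepsilon,\delta$) so that $M^{-1}$ exists and has entries bounded by a constant depending only on $m,\varepsilon,\delta$. Everything else — the orthonormalization, the projection identity $Q_U(v_j)=Q_U(v_j-u_j)$, and the final invocation of Proposition~\ref{prop:dist-ang} — is a routine assembly.
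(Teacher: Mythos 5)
Your proposal is correct, but it takes a genuinely different route from the paper's proof. The paper orthonormalizes $(e_i)=(v_i/\rho)$ via Proposition~\ref{prop:gs-red} to get $(\hat e_i)$ and then splits $Q_U(\hat e_i)=Q_U(\hat e_i-e_i)+Q_U(e_i)$. Since $\hat e_i-e_i\in V$, one has $Q_U(\hat e_i-e_i)=(Q_U-Q_V)(\hat e_i-e_i)$, giving $|Q_U(\hat e_i-e_i)|\le\dgras(U,V)\,|\hat e_i-e_i|\le(\varepsilon+C_2\delta)\dgras(U,V)$, while $|Q_U(e_i)|\le|e_i-u_i/\rho|\le\vartheta$. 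After checking via $\dgras(U,V)\le 2$ and \eqref{cond:eps-del} that $|Q_U(\hat e_i)|<1/\sqrt 2$, Proposition~\ref{prop:dist-ang} yields the \emph{self-referential} inequality $\dgras(U,V)\le C_3(\varepsilon+C_2\delta)\dgras(U,V)+C_3\vartheta$, and \eqref{cond:eps-del} allows one to absorb the $\dgras(U,V)$ term into the left side. This absorption trick is what makes the bound linear in $\vartheta$ without any matrix inversion.

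Your approach instead expands $\hat v_i=\sum_j a_{ij}v_j$, bounds the coefficients through the Gram matrix $M=\rho^2(I+E)$, and uses $Q_U(v_j)=Q_U(v_j-u_j)$ to get a bound on $|Q_U(\hat e_i)|$ that is manifestly linear in $\vartheta$ before invoking Proposition~\ref{prop:dist-ang}, so no absorption is needed. That is a valid alternative. Its cost is the Neumann-series analysis of $M^{-1}$ and, more importantly, the final constant check that $m\,c'(m,\varepsilon,\delta)\,\vartheta<1/\sqrt2$: you assert this follows from \eqref{cond:eps-del} and $\vartheta<\tfrac1{\sqrt2}-\tfrac14$ ``after tracing the constants,'' but you do not actually trace them, and the margin is tight (for $m=1$, $C_2=0$, one gets roughly $\tfrac87\vartheta<\tfrac1{\sqrt2}$, which holds but not by much). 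The paper's absorption argument sidesteps this delicacy entirely, since the crude bound $2(\varepsilon+C_2\delta)+\vartheta<\tfrac14+\vartheta<\tfrac1{\sqrt2}$ suffices for the application of Proposition~\ref{prop:dist-ang}, and the final linear-in-$\vartheta$ bound emerges from the algebra rather than from careful coefficient estimates. So: same skeleton (Proposition~\ref{prop:gs-red} followed by Proposition~\ref{prop:dist-ang}), genuinely different mechanism for obtaining linearity in $\vartheta$; the paper's is shorter and more robust, yours is more explicit but leaves a constant-chasing step to the reader.
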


\begin{proof}
  Set $e_i := v_i / \rho$ and let $(\hat{e}_1,\ldots,\hat{e}_m)$ be the
  orthonormal basis of $V$ arising from $(e_1,\ldots,e_m)$ by virtue of
  Proposition~\ref{prop:gs-red}.  Set $f_i := u_i / \rho$.
  \begin{align*}
    |Q_U (\hat{e}_i)| &\le |Q_U (\hat{e}_i - e_i)| + |Q_U (e_i)|
    \le |\hat{e}_i - e_i| \dgras(U,V) + |e_i - f_i| \\
    &\le |\hat{e}_i - e_i| \dgras(U,V) + \vartheta \,.
  \end{align*}
  From Proposition~\ref{prop:gs-red} we have $|\hat{e}_i - e_i| \le \varepsilon
  + C_2 \delta$, so

  \begin{displaymath}
    |Q_U (\hat{e}_i)| 
    \le (\varepsilon + C_2 \delta) \dgras(U,V) + \vartheta
    \le 2(\epsilon+C_2\delta)+\vartheta
    \overset{\eqref{cond:eps-del}}{<}
    \frac 14 +\vartheta <\frac{1}{\sqrt{2}}\,,
  \end{displaymath}
  since $C_3(m)\ge 4$ for all $m\in\N$; see the definition of $C_3(m)$ at the
  end of the proof of Proposition \ref{prop:dist-ang}. Hence
  Proposition~\ref{prop:dist-ang} is applicable to the orthonormal basis
  $(\hat{e}_1, \ldots,\hat{e}_m)$ of $V$, and we conclude
  \begin{displaymath}
    \dgras(U,V) \le C_3 (\varepsilon + C_2 \delta) \dgras(U,V) + C_3 \vartheta
  \end{displaymath}
  \begin{displaymath}
    \text{hence} \qquad
    (1 - C_3 (\varepsilon + C_2 \delta)) \dgras(U,V) \le C_3 \vartheta \,.
  \end{displaymath}
  Since we assumed \eqref{cond:eps-del} we can divide both sides by $1 - C_3
  (\varepsilon + C_2 \delta)$ reaching the estimate
  \begin{displaymath}
    \dgras(U,V) \le \frac{C_3}{1 - C_3 (\varepsilon + C_2 \delta)} \vartheta \,.
  \end{displaymath}
  Finally we set
  \begin{displaymath}
    C_4 = C_4(m,\varepsilon,\delta) 
    := \frac{C_3(m)}{1 - C_3(m) (\varepsilon + C_2(m) \delta)} \,. \qedhere
  \end{displaymath}
\end{proof}

\subsection{Angles and intersections of tubes}

The results of this subsection are taken from our earlier work
\cite{svdm-tp1}. We are concerned with the intersection of two tubes whose
$m$-dimensional `axes' form a small angle, i.e. with the set
\begin{equation}
  \label{SHH} S(H_1,H_2):= \{y\in \R^n\, \colon \dist (y,H_i)\le
  1\quad \mbox{for $i=1,2$}\}, 
\end{equation}
where $H_1\not= H_2\in G(n,m)$ are such that $\pi_{H_1}$ restricted to $H_2$ is
bijective. Since the set $\{y\in \R^n\, \colon \dist (y,H_i)\le 1\}$ is convex, closed
and centrally symmetric\footnote{The term \emph{central symmetry} is used here
  for central symmetry with respect to $0$ in $\R^n$.} for each $i=1,2$, we
immediately obtain the following:

\begin{lemma}
  \label{cccs} $S(H_1,H_2)$ is a convex, closed and centrally symmetric set in
  $\bbbr^n$; $\pi_{H_1}(S(H_1,H_2))$ is a convex, closed and centrally symmetric
  set in $H_1\cong \R^m$.
\end{lemma}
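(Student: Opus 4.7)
The plan is to reduce everything to elementary facts about the distance function to a linear subspace, and then handle the projection by a compactness argument in the ``vertical'' direction.

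First I would observe that each individual tube $T_i := \{y \in \R^n : \dist(y, H_i) \le 1\}$ is the sublevel set of the function $y \mapsto |Q_{H_i}(y)|$. This function is the composition of the linear map $Q_{H_i}$ with a norm, hence convex and continuous; because $H_i$ is a linear subspace we also have $|Q_{H_i}(-y)| = |Q_{H_i}(y)|$, i.e.\ the function is even. Consequently each $T_i$ is convex, closed, and centrally symmetric. Intersecting two such sets preserves all three properties, which settles the first assertion since $S(H_1,H_2) = T_1 \cap T_2$.

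For the image under $\pi_{H_1}$, convexity and central symmetry follow at once from the linearity of $\pi_{H_1}$ applied to a convex, centrally symmetric set. Only closedness needs a small argument, because linear projections of closed sets are not in general closed. The crucial observation is that for every $y \in S(H_1,H_2)$ one has $|Q_{H_1}(y)| \le 1$, so in the orthogonal decomposition $y = \pi_{H_1}(y) + Q_{H_1}(y)$ the $H_1^\perp$-component is bounded by $1$, independently of $y$.

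With that uniform bound in hand I would argue by sequences. Given $z_n \in \pi_{H_1}(S(H_1,H_2))$ with $z_n \to z \in H_1$, choose $y_n = z_n + w_n \in S(H_1,H_2)$ with $w_n \in H_1^\perp$ and $|w_n| \le 1$. By Bolzano--Weierstrass a subsequence $w_{n_k}$ converges to some $w \in H_1^\perp$, hence $y_{n_k} \to z + w$. The first part of the lemma gives closedness of $S(H_1,H_2)$, so $z + w \in S(H_1,H_2)$, and $\pi_{H_1}(z+w) = z$ exhibits $z$ as an element of $\pi_{H_1}(S(H_1,H_2))$.

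I do not anticipate any serious obstacle here: the only nonformal point is the closedness of the projection, and that is dispatched as soon as one notices the $H_1^\perp$-component of points of $S(H_1,H_2)$ is bounded by $1$. Note that neither the bijectivity of $\pi_{H_1}|_{H_2}$ nor any global boundedness of $S(H_1,H_2)$ is used for this statement; those hypotheses will only become relevant in subsequent lemmas where the geometry of the tilted tube intersection is examined quantitatively.
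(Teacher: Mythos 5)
Your argument is correct and follows the same basic reduction the paper uses: observe that each tube $T_i=\{y:\dist(y,H_i)\le 1\}$ is convex, closed and centrally symmetric (as a sublevel set of the convex, even, continuous function $|Q_{H_i}(\cdot)|$), that these properties are preserved under intersection, and that convexity and central symmetry pass through the linear map $\pi_{H_1}$. The paper presents the whole lemma as an immediate consequence of the first observation and offers no further proof; you have correctly noticed that closedness of $\pi_{H_1}(S(H_1,H_2))$ is the one point that is not purely formal (linear images of closed sets need not be closed), and your fix --- the $H_1^\perp$-component of any $y\in S(H_1,H_2)$ has norm at most $1$, so a Bolzano--Weierstrass argument closes the gap --- is exactly the right one. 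So this is the same approach as the paper, but with a detail made explicit that the paper leaves to the reader; your closing remark that the hypothesis on $\pi_{H_1}|_{H_2}$ being bijective is not needed here is also accurate.
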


For the global tangent-point curvature $\GTP$, the next lemma and its corollary
provide a key tool in bootstrap estimates in Section~\ref{bootstrap}.

\begin{lemma}
  \label{proj-slab}
  There exist constants $1> \eps_1=\eps_1(m)>0$ and $c_2(m)<\infty$ with the
  following property. If $H_1,H_2\in G(n,m)$ satisfy $0<\ang(H_1,H_2) = \alpha <
  \eps_1$, then there exists an $(m-1)$-dimensional subspace $W\subset H_1$ such
  that
  \[
  \pi_{H_1}\bigl(S(H_1,H_2)\bigr) \subset \{y\in H_1\colon \dist
  (y,W)\le 5c_2/\alpha\}\, .
  \]
\end{lemma}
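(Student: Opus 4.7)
The idea is to translate the condition $y\in S(H_1,H_2)$ into an algebraic bound on $\pi_{H_1}(y)$ and then to apply the singular value decomposition of the linear map $L:=\pi_{H_2^\perp}|_{H_1}\colon H_1\to H_2^\perp$ in order to extract the claimed $(m-1)$-dimensional slab.

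First I would check that every $y\in S(H_1,H_2)$ satisfies $|L(\pi_{H_1}(y))|\le 2$. Writing $y=y_1+z_1$ with $y_1=\pi_{H_1}(y)\in H_1$ and $z_1\in H_1^\perp$, $|z_1|\le 1$, one has
\[
\pi_{H_2^\perp}(y_1)=\pi_{H_2^\perp}(y)-\pi_{H_2^\perp}(z_1),
\]
and the two summands have norm at most $\dist(y,H_2)\le 1$ and $|z_1|\le 1$, respectively. Hence $\pi_{H_1}(S(H_1,H_2))\subset\{y_1\in H_1:|L(y_1)|\le 2\}$, and from now on the problem is linear-algebraic.

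Next I would apply the singular value decomposition to $L$: choose orthonormal bases $(v_1,\ldots,v_m)$ of $H_1$ and $(u_1,\ldots,u_m)$ of a subspace of $H_2^\perp$, together with singular values $0\le\sigma_1\le\cdots\le\sigma_m$, so that $L(v_i)=\sigma_i u_i$. Set $W:=\lin\{v_1,\ldots,v_{m-1}\}\subset H_1$; this is the candidate $(m-1)$-dimensional subspace. Any $y_1\in H_1$ decomposes uniquely as $y_1=w+tv_m$ with $w\in W$ and $t\in\R$, and $\dist(y_1,W)=|t|$. Orthogonality of $u_1,\ldots,u_m$ yields
\[
|L(y_1)|^2=|L(w)|^2+t^2\sigma_m^2\ge t^2\sigma_m^2,
\]
so that $|t|\le 2/\sigma_m$ on the projected set.

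The remaining step is to identify $\sigma_m$ with the angle $\alpha$. Since $\sigma_m=\|L\|$ is the operator norm of $\pi_{H_2^\perp}|_{H_1}$, it equals the sine of the largest principal angle between $H_1$ and $H_2$, which, up to a dimensional constant $c_2(m)$ depending on the convention for $\ang(\cdot,\cdot)$ fixed in \cite{svdm-tp1}, is bounded below by $\alpha/c_2$ whenever $\alpha<\eps_1(m)$ is sufficiently small. This gives $\dist(\pi_{H_1}(y),W)\le 2c_2/\alpha\le 5c_2/\alpha$, as desired. The only genuine subtlety, and where I expect to have to be careful, is the choice of $W$: one must select $W$ using the SVD so that $L(W)\perp L(v_m)$, otherwise cancellations in $|L(y_1)|$ could allow $|t|$ to be much larger than $2/\sigma_m$ and the slab estimate would be lost.
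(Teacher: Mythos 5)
Your proof is correct and self-contained, whereas the paper does not actually prove this lemma at all — it simply refers to \cite[Lemma~2.6]{svdm-tp1} — so there is no in-paper argument to compare against. The reduction of the geometric condition $y\in S(H_1,H_2)$ to the linear bound $|\pi_{H_2^\perp}(\pi_{H_1}(y))|\le 2$ is clean, and the singular value decomposition of $L=\pi_{H_2^\perp}|_{H_1}$ is exactly the right tool: choosing $W=\lin\{v_1,\ldots,v_{m-1}\}$ as the span of the non-top right singular vectors forces $L(W)\perp L(v_m)$, which is what makes the Pythagorean split $|L(y_1)|^2=|L(w)|^2+t^2\sigma_m^2$ legitimate — the subtlety you flag at the end is real, and the SVD is precisely what resolves it. One simplification is available: under this paper's convention $\ang=\dgras$, one has $\alpha=\|\pi_{H_1}-\pi_{H_2}\|$, and for two subspaces of equal dimension there is the exact identity $\|\pi_{H_1}-\pi_{H_2}\|=\|\pi_{H_2^\perp}|_{H_1}\|=\sigma_m$ (both equal $\sin$ of the largest principal angle), so in fact $\dist(\pi_{H_1}(y),W)\le 2/\alpha$ directly; the constant $c_2$ is then dimension-independent (any $c_2\ge 2/5$ works) and the threshold $\eps_1$ is not needed by the argument itself — it merely has to be imposed so that the hypothesis is met when the lemma is later invoked in Section~\ref{bootstrap} via \eqref{tan-angles}.
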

For the proof, we refer to \cite[Lemma~2.6]{svdm-tp1}. It is an instructive
elementary exercise in classical geometry to see why this lemma is true for
$m=2$ and $n=3$.

\smallskip

The next lemma is now practically obvious.

\begin{lemma} \label{strip-ball} Suppose that $H\in G(n,m)$ and
  a set $S'\subset H$ is contained in $\{y\in H\colon \dist(y,W)\le
  d\}$ for some $d>0$, where $W$ is an $(m-1)$-dimensional subspace
  of $H$. Then
  \[
  \H^m\bigl(S'\cap \Ball^n(a,s) \bigr) \le 2^m s^{m-1}d\,
  \]
  for each $a\in H$ and each $s>0$.
\end{lemma}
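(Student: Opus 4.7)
This lemma is a straightforward Fubini-type volume estimate, so the plan is essentially computational. The key observation is that $S'\cap\Ball^n(a,s)$ is simultaneously contained in the $m$-dimensional ball $H\cap\Ball^n(a,s)$ (which equals the flat ball $\Ball^m_H(a,s)$ since $a\in H$) and in the slab of thickness $2d$ around $W$; these two constraints together confine the set to a flat, thin pill-shape whose volume is easily bounded by a box argument.

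The first step is to choose coordinates in $H$. Split $H=W\oplus L$ orthogonally, where $L=W^\perp\cap H$ is a one-dimensional line, spanned by some unit vector $\ell$. Every $y\in H$ then writes uniquely as $y=w+t\ell$ with $w\in W$ and $t\in\R$, and $\dist(y,W)=|t|$. By hypothesis, any $y\in S'$ has $|t|\le d$, so that $S'\subseteq W\times[-d,d]$ under this identification.

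The second step is to slice by the $t$-coordinate and apply Fubini. For each fixed $t\in[-d,d]$, the slice
\[
\{w\in W:\ w+t\ell\in\Ball^n(a,s)\}
\]
is an $(m-1)$-ball (possibly empty) of radius $\sqrt{s^2-(t-\langle a,\ell\rangle)^2}\le s$, and in particular is contained in the $(m-1)$-dimensional cube of side length $2s$ centered at $\pi_W(a)$, whose $\H^{m-1}$-measure is $(2s)^{m-1}$. Integrating Fubini's slicing formula then yields
\[
\H^m\bigl(S'\cap\Ball^n(a,s)\bigr)\le\int_{-d}^{d}(2s)^{m-1}\,dt=2d\cdot 2^{m-1}s^{m-1}=2^m s^{m-1}d.
\]

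There is no real obstacle here; the only small choice is whether to bound each slice by a ball (giving a sharper constant $\omega_{m-1}$) or by a cube. Using the cube yields exactly the constant $2^m$ stated in the lemma, which is what the subsequent applications will need.
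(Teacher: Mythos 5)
Your proof is correct and takes essentially the same route as the paper's: the paper simply observes that $S'\cap\Ball^n(a,s)$ is contained in a rectangular box with $(m-1)$ edges of length $2s$ parallel to $W$ and one edge of length $2d$ perpendicular to $W$, and multiplies; your Fubini slicing over the one-dimensional complement $L=W^\perp\cap H$ is a slightly more explicit way of computing exactly that box volume, and correctly discards the sharper disk bound $\omega_{m-1}s^{m-1}$ in favor of the cube bound $(2s)^{m-1}$ to land on the stated constant.
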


\begin{proof}
  Writing each $y\in S'\cap \Ball^n(a,s)$ as $y= \pi_W(y) + (y-\pi_W(y))$,
  one sees that $S'\cap \Ball^n(a,s)$ is contained in a rectangular box with
  $(m-1)$ edges parallel to $W$ and of length $2s$ and the remaining edge
  perpendicular to $W$ and of length $2d$.
\end{proof}

  \reallyinvisible{}{
    {\tt\xx\xx I have commented the former Lemma 2.14 and its proof
      out, since
      we may not apply this in the slicing section... The material is still
      in the file\xx\xx} 
  }

%

\subsection{The voluminous simplices}

Several energy estimates for the global Menger curvature are based on
considerations of simplices that are roughly regular, which means that they have
all edges $\approx d$ and volume $\approx d^{m+1}$. Here are the necessary
definitions, making this vague description precise.

\begin{definition}
  Let $T = \conv(x_0,\ldots,x_{m+1})$ be an $(m+1)$-dimensional simplex in
  $\R^n$. For each $j = 0,\ldots,m+1$ we define the faces
  $\face_j(T)$, the heights $\height_j(T)$ and the minimal
  height  $\hmin(T)$ by
  \begin{align*}
    \face_j(T) &= \conv(x_0,\ldots,x_{j-1},x_{j+1},\ldots,x_{m+1}) \,, \\
    \height_j(T) &= \dist(x_j, \aff\{x_0,\ldots,x_{j-1},x_{j+1},\ldots,x_{m+1}\}) \\
    \text{and} \quad
    \hmin(T) &= \min\{ \height_i(T) : i = 0,1,\ldots,m+1 \} \,,
  \end{align*}
  where
  $\aff\{p_0,\ldots,p_{N}\}$ denotes the  (at most $N$-dimensional)
  affine plane spanned by 
$N+1$ the points 
$p_0,\ldots,p_N\in\R^n.$
\end{definition}
Note that for any $(m+1)$-dimensional simplex $T$ the volume
is given by
\begin{equation}\label{volume_simplex}
  \H^{m+1}(T)=\frac{1}{m+1}\height_i(T)\H^m(\face_i(T))\quad\textnormal{for any $i\in\{0,\ldots,m+1\}.$}
\end{equation}
The faces $\face_i(T)$ are lower-dimensional simplices themselves,
so that a simple inductive argument yields the estimate
\begin{equation}\label{volume_simplex_estimate}
  \H^{m+1}(T)\ge\frac{1}{(m+1)!}\height_{\textnormal{min}}(T)^{m+1}.
\end{equation}

\begin{definition}
  \label{def:voluminous}
  Fix some $\eta \in [0,1]$ and $d > 0$. Let $T = \conv(x_0,\ldots,x_{m+1})$ be
  an $(m+1)$-dimensional simplex in $\R^n$. We say that $T$ is {\it
    $(\eta,d)$-voluminous} and write $T \in \V(\eta,d)$ if the following
  conditions\footnote{A similar class of \emph{$1$-separated simplices\/} has been considered by Lerman and  Whitehouse in \cite[Section 3.1]{LW08b}} are satisfied
  \begin{displaymath}
    \diam(T) \le d
    \quad \text{and} \quad
    \hmin(T) \ge \eta d \,.
  \end{displaymath}
\end{definition}

\begin{proposition}
  \label{prop:perturbed}
  Let $T = \conv(x_0,\ldots,x_{m+1})$ be an $(\eta,d)$-voluminous simplex in
  $\R^n$ and set $\alpha = \frac{1}{8} \eta^2$. Let $\bar{x}_0 \in \R^n$ be such
  that $|x_0 - \bar{x}_0| \le \alpha d$ and set $\bar{T} =
  \conv(\bar{x}_0,x_1,\ldots,x_{m+1})$. Then
  \begin{displaymath}
    \diam(\bar{T}) \le \tfrac 98 d
    \quad\text{and}\quad
    \hmin(\bar{T}) \ge \tfrac 12 \eta d = \left( \tfrac 49 \eta \right) \left( \tfrac 98 d \right) \,.
  \end{displaymath}
  Thus, $\bar{T} \in \V\big(\tfrac 49 \eta, \tfrac 98 d\big)$\,.
\end{proposition}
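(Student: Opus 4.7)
My plan is to split the statement into the two bounds $\diam(\bar T) \le \tfrac 98 d$ and $\hmin(\bar T) \ge \tfrac 12 \eta d$ and handle them separately. The diameter bound is immediate from the triangle inequality, since $|\bar x_0 - x_i| \le d + \alpha d$ and $\alpha = \tfrac 18 \eta^2 \le \tfrac 18$ because $\eta \in [0,1]$. All of the real work goes into bounding the $m+2$ heights of $\bar T$ from below.

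For the height measured from the perturbed vertex the argument is essentially a one-liner, because the opposite face $\face_0(\bar T) = \conv(x_1,\ldots,x_{m+1}) = \face_0(T)$ does not change, so that the Lipschitz property of distance-to-affine-subspace yields $\height_0(\bar T) \ge \height_0(T) - |x_0 - \bar x_0| \ge (\eta - \alpha) d \ge \tfrac 78 \eta d$, well above $\tfrac 12 \eta d$.

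For $\height_j(\bar T)$ with $j \ge 1$ I plan to exploit the volume identity~\eqref{volume_simplex} to write $\height_j(\bar T) = (m+1)\H^{m+1}(\bar T)/\H^m(\face_j(\bar T))$ and to compare both numerator and denominator with the corresponding quantities for $T$. In the numerator, I would expand $\H^{m+1}$ using the common face $F := \conv(x_1,\ldots,x_{m+1})$ opposite the perturbed vertex, so that $\H^{m+1}(\bar T)/\H^{m+1}(T) = \height_0(\bar T)/\height_0(T)$. In the denominator, I would expand $\H^m$ using the common $(m-1)$-face $\conv(x_1,\ldots,\hat x_j,\ldots, x_{m+1})$ with affine span $Q$, obtaining $\H^m(\face_j(\bar T))/\H^m(\face_j(T)) = \dist(\bar x_0,Q)/\dist(x_0,Q)$. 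The crucial observation is that $Q \subset \aff(F)$, and therefore $\dist(x_0,Q) \ge \dist(x_0,\aff F) = \height_0(T) \ge \eta d$. Combined with $\height_0(T) \ge \eta d$ and two $\alpha d$-Lipschitz perturbations in numerator and denominator, this gives
\[
\frac{\height_j(\bar T)}{\height_j(T)} \;\ge\; \frac{1-\alpha/\eta}{1+\alpha/\eta} \;=\; \frac{1-\eta/8}{1+\eta/8} \;\ge\; \frac{7}{9}\,,
\]
and since $\height_j(T) \ge \eta d$ I conclude $\height_j(\bar T) \ge \tfrac 79 \eta d \ge \tfrac 12 \eta d$, completing the proof.

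The only genuine obstacle is to secure the bound $\dist(x_0,Q) \ge \eta d$ for the denominator. Without it, a direct comparison of $\H^m(\face_j(\bar T))$ with $\H^m(\face_j(T))$ could degrade by a factor as bad as $1 + 1/(8\eta^{m-1})$, which is far from $1$ for small $\eta$; the nesting $Q\subset\aff(F)$ sidesteps that loss entirely and converts the estimate into a factor that is uniformly close to one for all admissible $\eta$.
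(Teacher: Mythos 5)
Your proof is correct and takes a genuinely different route from the paper's. For the heights $\height_j(\bar T)$ with $j\ge 1$, the paper passes through the Grassmannian: it sets up linear spans $P,\bar P$ of the opposite faces, writes $\height_1(\bar T)\ge(\eta-\dgras(P,\bar P))d$, and then invests most of the effort in bounding $\dgras(P,\bar P)\le 4\alpha/\eta$ via a Gram--Schmidt-flavored comparison of orthogonal projections. Your argument sidesteps the angular metric entirely: it expresses $\height_j(\bar T)/\height_j(T)$ as a ratio of ratios using \eqref{volume_simplex} twice (once in dimension $m+1$ over the unchanged face $F$, once in dimension $m$ over the unchanged $(m-1)$-face $G$ with $Q=\aff G$), and the single geometric input is the nesting $Q\subset\aff F$, which gives $\dist(x_0,Q)\ge\height_0(T)\ge\eta d$. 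That nesting observation plays exactly the role that the Grassmannian angle bound plays in the paper, but it is essentially one line. The two perturbations (numerator up by at most $\alpha d/(\eta d)$, denominator down by at most the same) give $\height_j(\bar T)\ge\frac{1-\alpha/\eta}{1+\alpha/\eta}\,\eta d\ge\frac79\eta d>\frac12\eta d$, a slightly better constant than the paper's sharp $\frac12\eta d$, with strictly less machinery. One small remark worth making explicit, though it is implicit in your computation: for $\eta>0$ the voluminous hypothesis forces $T$, all its faces, and (by the $\height_0$ bound) $\bar T$ to be nondegenerate, so all the quantities you divide by are strictly positive; for $\eta=0$ one has $\alpha=0$ and $\bar T=T$, so the claim is vacuous.
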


\begin{proof}
  First we estimate the height $\height_0(\bar{T})$. Because $|x_0 - \bar{x}_0|
  \le \alpha d$ and $\eta \in [0,1]$ we have
  \begin{equation}
    \label{est:0height}
    \height_0(\bar{T}) \ge \height_0(T) - \alpha d 
    \ge (\eta - \alpha) d > \frac 12 \eta d \,.
  \end{equation}

  Fix two indices $i_1,i_2 \in \{1,2,\ldots,m+1\}$ such that $i_1 \ne i_2$. We
  shall estimate the height $\height_{i_1}(\bar{T})$. Without loss of generality
  we can assume that $x_{i_2}$ is placed at the origin. Furthermore, permuting
  the vertices of $T$ we can assume that $i_1 = 1$ and $i_2 = 2$. We need to
  estimate $\height_1(\bar{T})$. Set
  \begin{align*}
    P = \lin\{ x_0 - x_2, x_3 - x_2, \ldots, x_{m+1} - x_2 \}
    =\lin\{x_0,x_3,\ldots,x_{m+1}\}\\
    \bar{P} = \lin\{ \bar{x}_0 - x_2, x_3 - x_2, 
    \ldots, x_{m+1} - x_2 \}=\lin\{\bar{x}_0,x_3,\ldots,x_{m+1}\} \,.
  \end{align*}
  Now we can write
  \begin{align}
    \notag
    \height_1(\bar{T}) 
    &= \dist(x_1, \bar{P}) = |Q_{\bar{P}}(x_1)| \\
    &= |Q_P(x_1) - (Q_P(x_1) - Q_{\bar{P}}(x_1))| \notag \\
    &\ge |Q_P(x_1)| - |Q_P(x_1) - Q_{\bar{P}}(x_1)| 
    \label{est:jheight} \\
    &\ge \eta d - \|Q_P - Q_{\bar{P}}\| |x_1|\notag\\
    & \ge  (\eta - \dgras(P,\bar{P})) d \notag \,,
  \end{align}
  so all we need to do is to estimate $\dgras(P,\bar{P})$ from above unless
  $\ang(P,\bar{P})=0,$ in which case we are done anyway.

  For that purpose let $y_0:=\pi_{P\cap\bar{P}}(x_0)$ be the closest point to
  $x_0$ in the $(m-1)$-dimensional subspace $P\cap\bar{P}$. (Recall that
  $x_2=0$.) Set
  $$
  v_1:=\frac{x_0-y_0}{|x_0-y_0|}\in (P\cap\bar{P})^\perp,
  $$
  and choose an orthonormal basis $(v_2,\ldots,v_m)$ of $P\cap\bar{P}$. Since
  $y_0\in P\cap\bar{P}\subset\aff\{x_1,x_2,\ldots,x_{m+1}\}$ one has
  $$
  |x_0-y_0|\ge\dist(x_0,\aff\{x_1,x_2,\ldots,x_{m+1}\})\ge
  \height_{\textnormal{min}}(T)\ge\eta d,
  $$
  so that
  \begin{equation}
    \label{est:v1-dist}
    Q_{\bar{P}}(v_1) = \frac{Q_{\bar{P}}(x_0-y_0)}{|x_0-y_0|}
    = \frac{Q_{\bar{P}}(x_0)}{|x_0-y_0|} 
    = \frac{\dist(x_0,\bar{P})}{|x_0-y_0|}
    \le \frac{|x_0-\bar{x}_0|}{\eta d}
    \le \frac{\alpha}{\eta} \,.
  \end{equation}

   Choose any vector $\bar{v}_1 \in \bar{P}$ such that
  $(\bar{v}_1,v_2,\ldots,v_m)$ forms an orthonormal basis of $\bar{P}$.  Note
  that $\pi_{\bar{P}}(v_1)$ is orthogonal to $v_j$ for each $j =
  2,\ldots,m$. Indeed, if $j \in \{2,\ldots,m\}$, then we have
  \begin{align*}
    \langle \pi_{\bar{P}}(v_1), v_j \rangle 
    &= \Bigl\langle \sum_{i=2}^m
    \underbrace{\langle v_1, v_i \rangle}_{=0} v_i, v_j \Bigr\rangle +
    \big\langle \langle v_1, \bar{v}_1 \rangle \underbrace{\bar{v}_1, v_j \big\rangle}_{=0}=0. 
  \end{align*}
  Hence, for
  \begin{displaymath}
    w = \frac{\pi_{\bar{P}}(v_1)}{|\pi_{\bar{P}}(v_1)|} \,,
  \end{displaymath}
  we have $\bar{P} = \lin\{w,v_2,\ldots,v_m\}$ and $(w,v_2,\ldots,v_m)$ is also
  an orthonormal basis of $\bar{P}$. Moreover
  \begin{displaymath}
    |w - v_1| \le |w - \pi_{\bar{P}}(v_1)| + |\pi_{\bar{P}}(v_1) - v_1|
    = (1 - |\pi_{\bar{P}}(v_1)|) + |Q_{\bar{P}}(v_1)| \,.
  \end{displaymath}
  Using \eqref{est:v1-dist} we obtain $(1 - |\pi_{\bar{P}}(v_1)|) \le
  \alpha/\eta$, hence
  \begin{equation}
    \label{est:w-v1}
    |w - v_1| \le 2 \frac{\alpha}{\eta} \,.
  \end{equation}
  Let $h \in \Sphere^{n-1}$ be any unit vector in $\R^n$. We calculate
  \begin{align*}
    |\pi_P(h) - \pi_{\bar{P}}(h)| 
    &= \left| \sum_{j=2}^m \langle h, v_j \rangle v_j + \langle h, v_1 \rangle v_1 
      - \sum_{j=2}^m \langle h, v_j \rangle v_j - \langle h, w \rangle w \right| \\
    &\le |\langle h, (v_1 - w) \rangle v_1| + |\langle h, w \rangle (v_1 - w)|
    \le 2 |v_1 - w| \le 4 \frac{\alpha}{\eta} \,.
  \end{align*}
  This gives us the bound $\dgras(P,\bar{P}) \le 4\frac{\alpha}{\eta}$. Plugging
  this into \eqref{est:jheight} and recalling that $\alpha = \frac{1}{8} \eta^2$
  we get
  \begin{displaymath}
    \height_{i_1}(\bar{T}) = \height_1(\bar{T}) 
    \ge \big(\eta - 4\tfrac{\alpha}{\eta}\big) d 
    = \frac 12 \eta d \,.
  \end{displaymath}
  Since the index $i_1$ was chosen arbitrarily from the set $\{1,\ldots,m+1\}$,
  together with \eqref{est:0height} we obtain
  \begin{displaymath}
    \hmin(\bar{T}) \ge \frac 12 \eta d \,,
  \end{displaymath}
  which ends the proof.
\end{proof}

\subsection{Other auxiliary results}

The following theorem due to Haj\l{}asz gives a characterization of the Sobolev
space $W^{1,p}$ and is now widely used in analysis on metric spaces. We shall
rely on this result in Section~3.4.

\begin{theorem}[\textbf{Haj\l{}asz}\protect{\cite[Theorem
    1]{Haj96}}]\label{hajlasz-ptwise} Let $\Omega$ be a ball in $\R^m$ and
  $1<p<\infty$. Then a function $f\in L^p(\Omega)$ belongs to $W^{1,p}(\Omega)$
  if and only if there exists a function
  $g\in L^p(\Omega)$ such that
  \begin{equation}
    \label{ptwise}  |f(x)-f(y)|\le |x-y|\bigl(g(x)+g(y)\bigr)\, .
  \end{equation}
\end{theorem}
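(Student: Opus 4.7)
The plan is to prove both implications separately, with the forward direction following from a classical bound via the maximal function and the reverse direction relying on a mollification argument combined with the Hardy--Littlewood maximal theorem (which is where the hypothesis $p>1$ enters in an essential way).

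For the implication $f\in W^{1,p}(\Omega) \Rightarrow$ \eqref{ptwise}, I would use the well-known pointwise estimate
\[
|f(x)-f(y)| \le C|x-y|\bigl(\M|\nabla f|(x) + \M|\nabla f|(y)\bigr) \qquad \text{a.e. } x,y\in\Omega,
\]
where $\M$ is an appropriate (local or Hardy--Littlewood) maximal operator on $\Omega$. This inequality is obtained by writing the difference $f(x)-f(y)$ as an integral of $\nabla f$ along segments (after smoothing $f$ by mollification) and then integrating in spherical shells to bound the resulting expression by averages of $|\nabla f|$ on balls centered at $x$ and $y$. Since $p>1$, the Hardy--Littlewood maximal theorem gives $\M|\nabla f| \in L^p(\Omega)$, so one may take $g = C\,\M|\nabla f|$.

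For the converse, suppose $f\in L^p(\Omega)$ satisfies \eqref{ptwise} for some $g\in L^p(\Omega)$. Extend $f$ and $g$ to $\R^m$ (e.g.~by reflection across $\partial\Omega$, preserving the pointwise inequality up to a constant on a slightly smaller ball), and let $\varphi_\varepsilon$ be a standard mollifier with $\int\varphi_\varepsilon=1$. Set $f_\varepsilon := f * \varphi_\varepsilon$. Since $\int \nabla\varphi_\varepsilon = 0$, we may subtract $f(x)$ inside the convolution to obtain
\[
\nabla f_\varepsilon(x) = \int \nabla\varphi_\varepsilon(x-y)\bigl(f(y)-f(x)\bigr)\,dy.
\]
Applying \eqref{ptwise} and using the standard bound $|\nabla\varphi_\varepsilon(x-y)||x-y| \le C \varepsilon^{-m}\mathbf{1}_{\Ball(x,\varepsilon)}(y)$ yields
\[
|\nabla f_\varepsilon(x)| \le C\bigl(g(x) + \M g(x)\bigr) \qquad \text{a.e. } x,
\]
where $\M g$ is the Hardy--Littlewood maximal function of $g$. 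By the maximal theorem (this is precisely where $p>1$ is used), $\M g \in L^p$, so $\|\nabla f_\varepsilon\|_{L^p} \le C\|g\|_{L^p}$ uniformly in $\varepsilon$. As $f_\varepsilon \to f$ in $L^p(\Omega)$ and $\{\nabla f_\varepsilon\}$ is bounded in $L^p$, weak compactness in $L^p$ (again using $p>1$, so that $L^p$ is reflexive) produces a weak limit $v\in L^p(\Omega,\R^m)$, which one identifies with the distributional gradient of $f$ by passing to the limit in the defining duality $\int f_\varepsilon \,\mathrm{div}\,\psi = -\int \nabla f_\varepsilon \cdot \psi$ for $\psi\in C_c^\infty(\Omega,\R^m)$. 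Hence $f\in W^{1,p}(\Omega)$ with $\|\nabla f\|_{L^p}\le C\|g\|_{L^p}$.

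The main obstacle is the reverse direction: making the mollification argument rigorous requires care near $\partial\Omega$ (so that $f_\varepsilon$ is defined on a fixed subdomain and the pointwise bound \eqref{ptwise} applies at the relevant pairs of points), and the whole scheme genuinely breaks down at $p=1$ because the Hardy--Littlewood maximal operator fails to be bounded on $L^1$ and $L^1$ is not reflexive. For the application in the present paper, however, $p>m\ge 1$, so these technical issues do not obstruct the use of the theorem.
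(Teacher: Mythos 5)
The paper does not prove this theorem; it quotes it from Haj\l{}asz \cite{Haj96} and only sketches afterwards that, for $f\in W^{1,p}$, one may take $g$ to be a constant multiple of the Hardy--Littlewood maximal function $M(|Df|)$, while the converse bound $\|f\|_{W^{1,p}}\approx\|f\|_{L^p}+\inf_g\|g\|_{L^p}$ is attributed to the proof in \cite[p.~405]{Haj96}. Your proof is correct and is essentially the standard one: the forward direction is precisely the maximal-function estimate the paper records, and the converse via mollification, the cancellation $\int\nabla\varphi_\varepsilon=0$, the kernel bound $|\nabla\varphi_\varepsilon(x-y)|\,|x-y|\lesssim\varepsilon^{-m}\mathbf 1_{\Ball(x,\varepsilon)}(y)$, the resulting pointwise estimate $|\nabla f_\varepsilon|\lesssim g+Mg$, a uniform $L^p$ bound, and weak compactness is the route Haj\l{}asz himself follows, with $p>1$ entering twice (boundedness of $M$ on $L^p$, and reflexivity of $L^p$ for the weak limit). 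Two small presentation remarks, neither of which is a gap: inequality \eqref{ptwise} should be read as holding for a.e.\ pair $(x,y)$, which suffices because the mollification step only uses it inside an integral in $y$ for a.e.\ $x$; and the boundary issue is handled more cleanly without any reflection by mollifying on the shrunken domains $\Omega_\eta=\{x\in\Omega:\dist(x,\partial\Omega)>\eta\}$ for $\varepsilon<\eta$, obtaining $\|\nabla f_\varepsilon\|_{L^p(\Omega_\eta)}\le C\|g\|_{L^p(\Omega)}$ independently of $\varepsilon$ and $\eta$, and then exhausting $\Omega$.
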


In fact, Haj\l{}asz shows that if $f\in W^{1,p}$, then \eqref{ptwise} holds for
$g$ equal to a constant multiple of the Hardy--Littlewood maximal function $M
(|D f|)$ of $|D f|$ defined as
$$
Mh(x):=\sup_{r>0}\stint_{\Ball^m(x,r)}h(y)\,dy.
$$
Conversely,
\[
\|f\|_{W^{1,p}}\approx \|f\|_{L^p} + \inf_g \|g\|_{L^p}\, ,
\]
where the infimum is taken over all $g$ for which \eqref{ptwise} holds. This
follows from the proof of Theorem 1 in \cite[p. 405]{Haj96}.

Recall that $\beta$ and $\theta$ numbers were defined by \eqref{def:beta}
and \eqref{def:theta}.

\begin{definition}[cf. \cite{davidkenigtoro}, Definition~1.3]
  \label{def:rfvc}
  We say that a compact set $\Sigma \subset \R^n$ is \emph{Reif\-en\-berg-flat}
  (of dimension $m$) \emph{with vanishing constant} if
  \begin{displaymath}
    \lim_{r \to 0} \sup_{x \in \Sigma} 
    \theta_\Sigma(x,r) = 0 \,.
  \end{displaymath}
\end{definition}

The following proposition was proved by David, Kenig and Toro. We will rely on
it in Section~3.1.
\begin{proposition}[cf. \cite{davidkenigtoro}, Proposition~9.1]
  \label{prop:dkt-reg}
  Let $\kappa \in (0,1)$ be given. Suppose $\Sigma$ is an $m$-dimensional
  compact Reifenberg-flat set with vanishing constant in $\R^n$ and that there
  is a constant $C_\Sigma$ such that
  \begin{displaymath}
    \beta_\Sigma(x,r) \le C_\Sigma r^{\kappa} \quad \text{for each $x \in \Sigma$ and $r \le 1$.}
  \end{displaymath}
  Then $\Sigma$ is an $m$-dimensional $C^{1,\kappa}$-submanifold of $\R^n$
  without boundary\footnote{Although boundaries of manifolds are not explicitly
    excluded in the statement of \cite[Proposition 9.1]{davidkenigtoro} it
    becomes evident from the proof that no boundaries are present; see in
    particular \cite[p. 433]{davidkenigtoro}.}.
\end{proposition}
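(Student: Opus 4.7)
The strategy will be to promote Reifenberg's topological disk theorem to $C^{1,\kappa}$-regularity by exploiting the polynomial decay $\beta_\Sigma(x,r)\le C_\Sigma r^\kappa$ to control how fast the best-approximating affine $m$-planes rotate, both across dyadic scales at a fixed basepoint and across basepoints at a fixed scale. For every $x\in\Sigma$ and every dyadic scale $r_k=2^{-k}\le 1$ I would pick $H_k(x)\in G(n,m)$ almost realizing the infimum defining $\beta_\Sigma(x,r_k)$, so that
\begin{displaymath}
  \sup_{z\in\Sigma\cap\Ball(x,r_k)}\dist\bigl(z,\,x+H_k(x)\bigr)\le 2C_\Sigma r_k^{1+\kappa}\,.
\end{displaymath}
Reifenberg flatness with vanishing constant forces, at every sufficiently small scale, the existence of $m+1$ points in $\Sigma\cap\Ball(x,r_k)$ whose projections on $x+H_k(x)$ form an $(\eta,r_k)$-voluminous simplex in the sense of Definition~\ref{def:voluminous} for a universal $\eta=\eta(m)>0$. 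Since these vertices simultaneously lie within $2C_\Sigma r_k^{1+\kappa}$ of both $x+H_k(x)$ and $x+H_{k+1}(x)$, a Gram--Schmidt step (Proposition~\ref{prop:gs-red}) together with Proposition~\ref{prop:dist-ang} yields
\begin{displaymath}
  \dgras\bigl(H_k(x),H_{k+1}(x)\bigr)\le C(m)\,C_\Sigma\, r_k^\kappa\,,
\end{displaymath}
and summing the geometric series produces $T_x\Sigma:=\lim_{k\to\infty}H_k(x)$ with $\dgras(H_k(x),T_x\Sigma)\lesssim r_k^\kappa$.

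The same voluminous-simplex device compares planes at different basepoints. For $x,y\in\Sigma$ with $\rho:=|x-y|\in(0,\tfrac12)$, I would choose $k$ with $r_k\approx\rho$; both $H_k(x)$ and $H_k(y)$ then approximate $\Sigma\cap\Ball\bigl(\tfrac{x+y}{2},2r_k\bigr)$ with error $\lesssim r_k^{1+\kappa}$, and selecting a voluminous simplex inside this common region gives $\dgras(H_k(x),H_k(y))\lesssim r_k^\kappa$. Combined with the scale estimate from the first step, this produces
\begin{displaymath}
  \dgras\bigl(T_x\Sigma,T_y\Sigma\bigr)\le C'\,|x-y|^\kappa
  \qquad\text{for all }x,y\in\Sigma\text{ with }|x-y|\le \tfrac12\,.
\end{displaymath}

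To conclude, for each $x_0\in\Sigma$ Reifenberg's topological disk theorem furnishes a bi-H\"older homeomorphism $\varphi\colon\Ball^m\to\Sigma\cap\Ball(x_0,R)$ for some $R>0$. Splitting $\R^n=T_{x_0}\Sigma\oplus(T_{x_0}\Sigma)^\perp$ and using that $\dgras(H_k(x_0),T_{x_0}\Sigma)\lesssim r_k^\kappa$ at small scales, the orthogonal projection of $\Sigma\cap\Ball(x_0,R/2)$ onto $T_{x_0}\Sigma$ is injective with Lipschitz inverse, exhibiting $\Sigma\cap\Ball(x_0,R/2)$ as the graph of a Lipschitz map $f\colon V\to(T_{x_0}\Sigma)^\perp$ over an open set $V\subset T_{x_0}\Sigma$. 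At each $\xi\in V$ the differential $Df(\xi)$ is determined by $T_{(\xi,f(\xi))}\Sigma$, so the H\"older estimate of the previous paragraph translates into $|Df(\xi)-Df(\xi')|\le C''|\xi-\xi'|^\kappa$ and hence $f\in C^{1,\kappa}(V)$. Absence of boundary is automatic because $\varphi$ is a homeomorphism onto an \emph{open} piece of $\Sigma$. The hard step, hidden behind the phrase ``Reifenberg flatness produces voluminous simplices'', is the quantitative extraction of those simplices at arbitrarily small scales; this is precisely where the vanishing-constant hypothesis is used, and it is what keeps the proof from being purely algebraic.
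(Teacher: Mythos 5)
The paper does not prove Proposition~\ref{prop:dkt-reg}; it is quoted from David--Kenig--Toro (their Proposition~9.1), so there is no in-paper proof to compare against. Your strategy is nevertheless close in spirit to the authors' own ``direct arguments'' for the quantitatively sharper Corollary~\ref{C1kappa-first}: fix near-optimal planes $H(r)$ at dyadic scales, extract from $\Sigma$ a nondegenerate linear frame, compare consecutive scales to get $\dgras(H(r),H(r/2))\lesssim r^\kappa$, sum, transfer the resulting H\"older oscillation of tangent planes to the local graph function.

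There is, however, a genuine misstep in the plane-rotation step. You invoke Definition~\ref{def:voluminous}, which concerns $(m+1)$-\emph{dimensional} simplices, for a configuration of $m+1$ points of $\Sigma$ near $x$ that is meant to approximate a frame inside the $m$-plane $x+H_k(x)$. But an $(m+1)$-simplex whose $m+2$ vertices all lie within $O(r_k^{1+\kappa})$ of a single affine $m$-plane necessarily has $\hmin=O(r_k^{1+\kappa})$, hence cannot be $(\eta,r_k)$-voluminous for any fixed $\eta>0$ once $r_k$ is small: the very hypothesis $\beta_\Sigma(x,r)\le C_\Sigma r^{\kappa}$ kills the voluminosity you are trying to use. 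The tool you actually need is the $(\rho,\eps,\delta)$-basis machinery (Definition~\ref{ortho_basis} with Propositions~\ref{prop:gs-red} and~\ref{prop:red-ang}); the vanishing-$\theta$ hypothesis is used precisely to find $m$ points $z_1,\dots,z_m\in\Sigma$ with $z_i-x$ forming an approximately orthonormal basis of $H_k(x)$, exactly as the authors do in the proof of Corollary~\ref{C1kappa-first}. The voluminous-simplex notion belongs to the energy and Ahlfors-regularity estimates of Sections~3.1--3.2, not here. A smaller gloss: you write $Df(\xi)$ and identify it with $T_{(\xi,f(\xi))}\Sigma$ before establishing that the Lipschitz graph map $f$ is differentiable. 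One needs either Rademacher together with the fact that a Lipschitz map whose a.e.\ derivative agrees with a H\"older-continuous field is $C^{1,\kappa}$, or a direct Campanato-type argument from the $\beta$-decay; either route closes the gap, but as written the inference is circular.
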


\section{Towards the $W^{2,p}$ estimates for graphs}

\label{sec:3}

In this section we prove the harder part of the main result, i.e. the
implications (2)~$\Rightarrow$~(1) and (3)~$\Rightarrow$~(1). We follow the
scheme sketched in the introduction. Each of the four steps is presented in a
separate subsection.

\subsection{The decay of $\beta$ numbers and initial $C^{1,\kappa}$ estimates}

In this subsection we prove the following two results.

\begin{proposition}
  \label{prop:beta-est}
  Let $\Sigma \subset \R^n$ be an $m$-fine set, i.e. $
  \Sigma\in\F(m)$, such that
  $$\|\GMC\|_{L^{p}(\Sigma,\H^m)} \le E^{1/p}$$
  for some $E<\infty $ and some $p >  m$.
  Then, the inequality
  \[
  \beta_{\Sigma} (x,r)\le C\left(
    \frac{E}{A_\Sigma}\right)^{\kappa_1/(p-m)} r^{\kappa_1}\, , \qquad \kappa_1:=\frac{p-m}{p(m+1)+2m}\, ,
  \]
  holds for all $r\in (0, \diam\Sigma]$ and all $x\in\Sigma$. The constant $C$ depends on $m,p$ only.
\end{proposition}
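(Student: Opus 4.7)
The plan is to derive a pointwise lower bound on $\GMC$ on a small ball around $x$ whenever $\beta_\Sigma(x,r)$ is non-negligible, and then convert this into the claimed decay by integrating against the $L^p$ bound on $\GMC$ and the Ahlfors lower bound. Fix $x\in\Sigma$ and $r\in(0,\diam\Sigma]$ and write $\beta:=\beta_\Sigma(x,r)$. Since the claim is trivial when $\beta$ is bounded below by a constant, I assume $\beta$ is small, so that the $m$-fine condition from Definition~\ref{def:fine} gives $\theta_\Sigma(x,r)\le M_\Sigma\beta$, producing $H_\ast\in G(n,m)$ with $\HD(\Sigma\cap\CBall(x,r),(x+H_\ast)\cap\CBall(x,r))\le M_\Sigma\beta r$. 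First, I would fix the canonical orthonormal configuration $p_i:=x+(r/2)e_i$, where $(e_1,\ldots,e_m)$ is an orthonormal basis of $H_\ast$, and use the Hausdorff approximation to select $y_i\in\Sigma$ with $|y_i-p_i|\le M_\Sigma\beta r$; for $\beta$ sufficiently small, $\conv(x,y_1,\ldots,y_m)$ is $(\eta_0,r)$-voluminous with $\eta_0=\eta_0(m,M_\Sigma)>0$.

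Next, set $H:=\lin\{y_1-x,\ldots,y_m-x\}$; directly from the definition of $\beta_\Sigma(x,r)$ there exists $z\in\Sigma\cap\Ball(x,r)$ with $\dist(z,x+H)\ge\beta r/2$, and the $(m+1)$-simplex $T:=\conv(x,y_1,\ldots,y_m,z)$ satisfies $\diam(T)\le 2r$ together with $\H^{m+1}(T)\gtrsim\beta r^{m+1}$, so
\[
\DC(x,y_1,\ldots,y_m,z)=\frac{\H^{m+1}(T)}{\diam(T)^{m+2}}\gtrsim\frac{\beta}{r}.
\]
Estimating each of the $m+2$ heights of $T$ via the identity $h_i=(m+1)\H^{m+1}(T)/\H^m(\face_i(T))$, combined with the trivial upper bound $\H^m(\face_i(T))\lesssim r^m$, gives $\hmin(T)\gtrsim\beta r$; thus $T$ is $(\eta_T,2r)$-voluminous with $\eta_T\gtrsim\beta$. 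Proposition~\ref{prop:perturbed}, applied with $\alpha=\eta_T^2/8$, then ensures that the simplex $\conv(\bar x,y_1,\ldots,y_m,z)$ stays voluminous for every $\bar x\in\Sigma\cap\Ball(x,c_1\beta^2 r)$, and consequently $\GMC(\bar x)\gtrsim\beta/r$ for all such $\bar x$.

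Integrating this pointwise bound against the Ahlfors lower bound from Definition~\ref{def:fine} over the perturbation ball yields
\[
E\ge\int_{\Sigma\cap\Ball(x,c_1\beta^2 r)}\GMC^p\,d\H^m\gtrsim A_\Sigma(\beta^2 r)^m\Bigl(\frac{\beta}{r}\Bigr)^p=C\,A_\Sigma\,\beta^{p+2m}\,r^{m-p},
\]
which rearranges to $\beta^{p+2m}\lesssim(E/A_\Sigma)\,r^{p-m}$. Setting $X:=(E/A_\Sigma)\,r^{p-m}$, the claimed inequality $\beta\le C(E/A_\Sigma)^{\kappa_1/(p-m)}r^{\kappa_1}$ carries information only in the regime $X\le 1$, and there $X^{1/(p+2m)}\le X^{1/(p(m+1)+2m)}$ since $p+2m\le p(m+1)+2m$, so the bound just obtained implies the claim; in the complementary regime the claim follows from the trivial estimate $\beta\le 1$.

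The technical heart of the argument is twofold. First, the construction of a fat base simplex with fatness $\eta_0$ depending only on $(m,M_\Sigma)$ crucially relies on the \emph{bilateral} Hausdorff approximation $\theta\lesssim\beta$ encoded in Definition~\ref{def:fine}, and would fail if only the one-sided quantity $\beta$ were controlled. Second, and more delicately, the bound $\hmin(T)\gtrsim\beta r$ requires analysing the heights at $x$ and at every $y_i$ simultaneously, using both the near-orthonormality of the base and the $\beta r/2$ offset of $z$ from $x+H$; any slack in either input would degrade the final exponent.
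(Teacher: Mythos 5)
Your argument is genuinely different from the paper's and contains a real gap. The paper proves the decay estimate by taking $T=\conv(x_0,\ldots,x_{m+1})$ to be the simplex of \emph{maximal} $\H^{m+1}$-measure with vertices in $\Sigma\cap\CBall(x,r)$. Maximality then forces every point of $\Sigma\cap\CBall(x,r)$ to lie within $\hmin(T)$ of the plane through the largest face, which yields $\beta_\Sigma(x,r)\le 2\hmin(T)/r$ without invoking condition~(ii) of Definition~\ref{def:fine} at all; applying Lemma~\ref{lem:eta-d-balance} to that $T$ finishes the proof. You instead construct a fat $m$-simplex with a vertex at $x$ by hand, picking near-orthonormal points $y_i\in\Sigma$ from the $\theta_\Sigma\le M_\Sigma\beta_\Sigma$ control, and then adding a point $z$ far from $x+H$. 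That construction is fine as far as it goes, but it makes the fatness $\eta_0$ of the base simplex, the perturbation radius, and therefore the final constant all depend on $M_\Sigma$. The proposition explicitly claims $C=C(m,p)$, and this sharper dependence is not cosmetic: Remark~\ref{rem:Msigma} relies precisely on the fact that the decay~\eqref{BETA} is uniform in $M_\Sigma$ in order to later remove $M_\Sigma$ from the estimates. Your bound cannot be fed into that remark without circularity.

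There is a second, related gap in the opening reduction. You say the claim is ``trivial when $\beta$ is bounded below by a constant'' and assume $\beta$ small. It is not trivial: the target inequality $\beta\le C(E/A_\Sigma)^{\kappa_1/(p-m)}r^{\kappa_1}$ has a right-hand side that can be arbitrarily small, so a lower bound $\beta\ge c_0$ does not verify it; what one needs is a lower bound on $E$ in terms of $\beta$, which is exactly what your construction is supposed to supply but only does when $\beta$ is small enough (with a threshold depending on $M_\Sigma$) that the $y_i$ can be selected with a voluminous base. The maximal-simplex argument in the paper handles all $\beta$ uniformly, precisely because it does not need a pre-constructed fat base.

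As a final remark: the exponent you obtain from $\beta^{p+2m}\lesssim(E/A_\Sigma)r^{p-m}$ is actually \emph{better} than the paper's $\kappa_1=(p-m)/(p(m+1)+2m)$, because using the fat base gives $\DC\gtrsim\beta/r$ whereas the paper only has the crude bound $\DC\gtrsim\eta^{m+1}/d$ from~\eqref{volume_simplex_estimate}, and your conversion between the two exponents for $Y:=(E/A_\Sigma)r^{p-m}\le 1$ (versus the trivial case $Y\ge1$) is correct. So the approaches genuinely trade off: a sharper exponent at the price of a constant depending on $M_\Sigma$ and a non-uniform smallness hypothesis. For the purposes of this paper, though, the uniform-in-$M_\Sigma$ constant is what matters, and that is what your argument loses. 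The fix is to replace the constructed simplex by the maximal one, as in the paper, rather than to appeal to Definition~\ref{def:fine}(ii) at this step.
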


\begin{proposition}
  \label{prop:beta-est-tp}
  Let $\Sigma\in\F (m)$ be an $m$-fine set such that
  \[
  \|\GTP\|_{L^p(\Sigma, \H^m)}\le E^{1/p} 
  \]
  for some map $H\colon \Sigma\to G(n,m)$, a constant  $E<\infty$ and some $p>m$. Then, the inequality
  \[
  \beta_{\Sigma} (x,r)\le C\left(
    \frac{E}{A_\Sigma}\right)^{\kappa_2/(p-m)} r^{\kappa_2}\, , \qquad \kappa_2:=\frac{p-m}{p+m}\, ,
  \]
  holds for all $r\in (0, \diam\Sigma]$ and all $x\in\Sigma$. The constant $C$
  is an absolute constant.
\end{proposition}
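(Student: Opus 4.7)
The plan is to exploit the pointwise inequality
\[
\dist(y, x^* + H(x^*)) \le \tfrac{1}{2} |y-x^*|^2\, \GTP(x^*)
\qquad \text{for all } x^*, y \in \Sigma,
\]
which is built into the very definition of the tangent-point curvature, and to combine it with Markov's inequality and the Ahlfors lower bound from Definition~\ref{def:fine}(i) to produce a ``good reference point'' $x^* \in \Sigma \cap \Ball^n(x,r)$ where $\GTP$ is small. The plane $x^* + H(x^*)$ then automatically well approximates $\Sigma$ inside $\Ball^n(x,r)$, and a routine parallel shift turns it into a plane passing through the required center $x$.

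Fix $x \in \Sigma$ and $r \in (0, \diam \Sigma]$. For any threshold $\lambda > 0$, Markov's inequality gives
\[
\H^m\bigl(\Sigma \cap \{\GTP > \lambda\}\bigr) \le E / \lambda^p,
\]
while the $m$-fineness of $\Sigma$ supplies $\H^m(\Sigma \cap \Ball^n(x,r)) \ge A_\Sigma r^m$. Choosing
\[
\lambda \;:=\; 2^{1+m/p}\bigl(E/(A_\Sigma r^m)\bigr)^{1/p}
\]
makes the bad set strictly smaller in measure than $\Sigma \cap \Ball^n(x,r)$, so some $x^* \in \Sigma \cap \Ball^n(x,r)$ satisfies $\GTP(x^*) \le \lambda$. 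Applying the defining inequality of $\GTP$ at $x^*$ to every $y \in \Sigma \cap \Ball^n(x,r)$, and separately to the pair $(x^*,x)$ (note $x \in \Sigma$), yields
\[
\dist(y, x^* + H(x^*)) \le 2 r^2 \lambda
\qquad\text{and}\qquad
\dist(x, x^* + H(x^*)) \le \tfrac{1}{2} r^2 \lambda.
\]
The affine $m$-planes $x + H(x^*)$ and $x^* + H(x^*)$ are parallel with mutual Hausdorff distance equal to $\dist(x, x^* + H(x^*))$, so adding the two estimates gives $\dist(y, x + H(x^*)) \le \tfrac{5}{2} r^2 \lambda$. Feeding $H(x^*)$ into the infimum that defines $\beta_\Sigma(x,r)$ then produces the strong bound
\[
\beta_\Sigma(x,r) \;\le\; \tfrac{5}{2}\, r\, \lambda \;\le\; C_1\, (E/A_\Sigma)^{1/p}\, r^{(p-m)/p}
\]
with an absolute constant $C_1 \le 10$ (since $p>m$ forces $2^{1+m/p} \le 4$).

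The exponent $(p-m)/p$ produced above is strictly larger than the stated $\kappa_2 = (p-m)/(p+m)$. To recover the precise form of the proposition I would interpolate this strong estimate against the trivial bound $\beta_\Sigma(x,r) \le 1$, which holds because $0 \in H$ forces $\dist(y, x+H) \le |y-x| \le r$. Writing $\beta = \beta^{1-\alpha}\beta^{\alpha}$ with $\alpha := p/(p+m)$ one checks $\alpha(p-m)/p = \kappa_2$ and $\alpha/p = 1/(p+m) = \kappa_2/(p-m)$, yielding exactly
\[
\beta_\Sigma(x,r) \le C_1\, (E/A_\Sigma)^{\kappa_2/(p-m)}\, r^{\kappa_2}.
\]
I expect the only genuine subtlety to be the parallel-plane step and the bookkeeping needed to verify that $C_1$ is truly absolute: the Markov step and the Ahlfors constraint each carry only explicit, dimension-free constants, and no hidden dependence on $n,m,p$ enters. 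A secondary choice point is the radius in which $x^*$ is sought; a smaller ball $\Ball^n(x,\rho)$ would shorten the distances $|y-x^*|$ and $|x-x^*|$ but force a correspondingly larger $\lambda$ via $E/\lambda^p < A_\Sigma \rho^m$, and a direct optimization shows that $\rho = r$ is the best choice.
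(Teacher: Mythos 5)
Your proof is correct, and it takes a genuinely different route from the paper's. The paper fixes $\beta=\beta_\Sigma(x,r)$, shows via the triangle inequality that $\GTP(z)\ge\frac49\beta/r$ for every $z$ in the \emph{small} ball $\Ball^n(x,\beta r/2)$, and then integrates $\GTP^p$ over that ball; the extra $\beta^m$ coming from the measure of $\Ball^n(x,\beta r/2)$ is precisely what produces the exponent $\kappa_2=(p-m)/(p+m)$. You instead apply Markov's inequality together with the Ahlfors lower bound on the \emph{full} ball $\Ball^n(x,r)$ to locate a single good reference point $x^*$ with $\GTP(x^*)\lesssim(E/(A_\Sigma r^m))^{1/p}$, translate its tangent plane to pass through $x$, and read off $\beta_\Sigma(x,r)\lesssim(E/A_\Sigma)^{1/p}\,r^{1-m/p}$; the final interpolation with $\beta\le1$ (using the exponent $\alpha=p/(p+m)$) recovers the stated form with an absolute constant, since $C_1^\alpha\le C_1$.

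Two remarks on what the alternative route buys. First, your intermediate bound carries the $r$-exponent $1-m/p=\tau$, which is exactly the optimal H\"older exponent that the paper only reaches after the bootstrap argument of Section~\ref{bootstrap}; so your argument shows immediately that the \emph{global} decay rate of $\beta_\Sigma(x,r)$ is already $r^\tau$. This does not make the bootstrap redundant, because the bootstrap produces a \emph{localized} constant (the energy $E_B$ over a small ball rather than the global energy $E$), which is essential for the Haj\l{}asz-type pointwise estimate in Section~3.4; but it is a clean observation. Second, the "subtlety" you flag about the parallel-plane shift is in fact entirely routine: $\dist(y,x+H(x^*))=|Q_{H(x^*)}(y-x)|\le|Q_{H(x^*)}(y-x^*)|+|Q_{H(x^*)}(x-x^*)|$ is just linearity of the projection, and the optimization over the radius of the ball in which $x^*$ is sought is also as you say. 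All constants remain dimension-free, so the claim that $C$ is absolute goes through.
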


The argument is pretty similar in either case but it will be convenient to give
two separate proofs.

For the proof of Proposition~\ref{prop:beta-est} we mimic -- up to some
technical changes -- the proof of \cite[Corollary~2.4]{slawek-phd}. First we
prove a lemma which is an analogue of\cite[Proposition~2.3]{slawek-phd}.

\begin{lemma}
  \label{lem:eta-d-balance}
  Let $\Sigma \subset \R^n$ be an $m$-fine set, and let $x_0,x_1,\ldots,
  x_{m+1}$ be arbitrary points of $\Sigma$. Assume that $T =
  \conv(x_0,\ldots,x_{m+1})$ is $(\eta,d)$-voluminous for some $\eta \in (0,1)$
  and some $d \in (0,\infty)$. Furthermore, assume that
  $\|\GMC\|_{L^{p}(\Sigma,\H^m)} \le E^{1/p}$ for some $E < \infty$ and some
  $p>m$. Then there exists a constant $C=C(m,p)$ depending only on $m$ and $p$,
  such that
  \begin{displaymath}
    E \ge C A_{\Sigma} d^{m-p} \eta^{p(m+1) + 2m} \,.
  \end{displaymath}
  Equivalently,
  \begin{displaymath}
    \eta \le C' \left( \frac E{A_{\Sigma}} \right)^{\kappa_1/(p-m)} d^{\kappa_1}\,,
  \end{displaymath}
  where $C' = C'(m,p)$ and
  \[
  \kappa_1=\frac{p-m}{p(m+1)+2m}\, .
  \]
\end{lemma}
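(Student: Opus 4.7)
The plan is to use Proposition~\ref{prop:perturbed} to turn a single $(\eta,d)$-voluminous simplex at $x_0$ into a pointwise lower bound for $\GMC$ on a whole ball of $\Sigma$ around $x_0$, and then to integrate against $\H^m$ using the Ahlfors regularity hypothesis. The exponent $\kappa_1 = (p-m)/(p(m+1)+2m)$ should appear as the natural balance between the degeneration of the volume of $\bar T$ as $\eta\to 0$ and the gain in measure $\H^m(\Sigma\cap\Ball)$ as the perturbation radius grows like $\eta^2 d$.

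Concretely, I would first set $\alpha := \tfrac{1}{8}\eta^2$ and, for every $\bar{x}_0 \in \Sigma \cap \Ball^n(x_0,\alpha d)$, apply Proposition~\ref{prop:perturbed} to $\bar T := \conv(\bar{x}_0,x_1,\ldots,x_{m+1})$ to obtain $\bar T \in \V(\tfrac 49\eta,\tfrac 98 d)$. Combining $\diam(\bar T) \le \tfrac 98 d$ with the height estimate $\hmin(\bar T)\ge \tfrac 12 \eta d$ and the elementary volume bound \eqref{volume_simplex_estimate} yields
\[
\DC(\bar{x}_0,x_1,\ldots,x_{m+1}) \;=\; \frac{\H^{m+1}(\bar T)}{\diam(\bar T)^{m+2}} \;\ge\; \frac{(\eta d/2)^{m+1}}{(m+1)!\,(9d/8)^{m+2}} \;=\; c(m)\,\frac{\eta^{m+1}}{d}\,.
\]
Since $\bar{x}_0,x_1,\ldots,x_{m+1}\in\Sigma$, the very definition of $\GMC$ then forces
\[
\GMC(\bar{x}_0) \;\ge\; c(m)\,\frac{\eta^{m+1}}{d}\qquad\text{for every }\bar{x}_0\in\Sigma\cap\Ball^n(x_0,\alpha d).
\]

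The second step is to raise this pointwise inequality to the $p$-th power, integrate over $\Sigma\cap\Ball^n(x_0,\alpha d)$, and invoke Ahlfors regularity \eqref{eq:fine:reg}. Note that $\alpha d = \tfrac{1}{8}\eta^2 d \le d \le \diam(T)\le \diam\Sigma$, so condition \eqref{eq:fine:reg} applies. This yields
\[
E \;\ge\; \int_{\Sigma\cap\Ball^n(x_0,\alpha d)} \GMC^p\,d\H^m \;\ge\; c(m)^p\,\frac{\eta^{p(m+1)}}{d^p}\cdot A_\Sigma(\alpha d)^m \;=\; C(m,p)\,A_\Sigma\,\eta^{p(m+1)+2m}\,d^{m-p},
\]
which is precisely the first form of the conclusion. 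Solving this for $\eta$ gives the equivalent form with exponent $\kappa_1/(p-m)$ on $E/A_\Sigma$ and exponent $\kappa_1$ on $d$.

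I do not expect any serious obstacle here: the lemma is essentially an accounting exercise once Proposition~\ref{prop:perturbed} is available. The only point that needs care is making sure the perturbation radius $\alpha d$ really does not exceed $\diam\Sigma$ (otherwise Ahlfors regularity would be unavailable); since the vertices of $T$ lie on $\Sigma$ one has $d\ge\diam T$ only when $T$ is chosen optimally, but in any case $\alpha d \le d/8 \le \diam\Sigma/8$ because $\eta\le 1$ and one may assume without loss of generality that $d=\diam(T)\le\diam\Sigma$ (replacing a larger $d$ only weakens the hypothesis). The tracking of the exponents $p(m+1)$ (from the volume factor) and $2m$ (from the measure factor $\alpha^m d^m$) is what produces the somewhat unusual exponent $\kappa_1=(p-m)/(p(m+1)+2m)$.
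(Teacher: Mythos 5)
Your proof is correct and follows essentially the same route as the paper's: perturb the vertex $x_0$ via Proposition~\ref{prop:perturbed}, lower-bound $\DC$ on the perturbed simplex via \eqref{volume_simplex_estimate}, and integrate the resulting pointwise bound for $\GMC$ over $\Sigma\cap\Ball^n(x_0,\alpha d)$ using Ahlfors regularity. The only addition (a sensible one) is your explicit check that $\alpha d\le\diam\Sigma$ so that \eqref{eq:fine:reg} applies; note in passing that the phrase ``one has $d\ge\diam T$ only when $T$ is chosen optimally'' is backwards --- the inequality $\diam T\le d$ is automatic from $T\in\V(\eta,d)$ --- but your subsequent WLOG reduction to $d=\diam T$ is still valid and the argument goes through unchanged.
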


\begin{proof}
  Set $\alpha = \frac{1}{8} 
  \eta^2$. By Proposition~\ref{prop:perturbed},
  each $(m+1)$-simplex 
  $$
  \bar{T} = \conv(\bar{x}_0,x_1,\ldots,x_{m+1})
  $$ 
  satisfying $|x_0 - \bar{x}_0| \le \alpha d$ is $(\frac 49 \eta, \frac 98
  d)$-voluminous. Thus, for any such $\bar{T}$ we have according to
  \eqref{volume_simplex_estimate}
  \begin{equation}
    \label{est:DCbound}
    \DC(\bar{T}) \ge \frac{\big( \frac 49 \eta \big)^{m+1}}{(m+1)! \frac 98 d} 
    = C \frac{\eta^{m+1}}{d} \,,
  \end{equation}
  where $C = C(m) = (\frac 49)^{m+1} \frac 8{9(m+1)!}$. Using
  \eqref{est:DCbound} we obtain
  \begin{align*}
    E &\ge \|\GMC\|^p_{L^{p}(\Sigma,\H^m)}\\[4pt]
    &\ge \int_{\Sigma \cap \Ball(x_0,\alpha d)} \GMC(x)^p\       d\H^m(x) \\[4pt]     
    &\ge \Big(C \frac{\eta^{m+1}}{d}\Big)^p \H^m(\Sigma \cap \Ball(x_0,\alpha d))\\[4pt]        
    &\ge C^p  \left(\tfrac{1}{8}\right)^m A_{\Sigma} d^{m - p} \eta^{p(m+1) + 2m} \,.
  \end{align*}
  This completes the proof of the lemma.
\end{proof}

\medskip

We are now ready to give the {\it Proof of Proposition~\ref{prop:beta-est}}.

Fix some point $x \in \Sigma$ and a radius $r \in (0,\diam(\Sigma)]$. Let $T =
\conv(x_0, \ldots, x_{m+1})$ be an $(m+1)$-simplex such that $x_i \in \Sigma
\cap \CBall(x,r)$ for $i = 0,1,\ldots,m+1$ and such that $T$ has maximal
$\H^{m+1}$-measure among all simplices with vertices in $\Sigma \cap
\CBall(x,r)$, i.e.
\begin{displaymath}
  \H^{m+1}(T) = \max\{
  \H^{m+1}(\conv(x_0',\ldots,x_{m+1}')) : x_i' \in \Sigma \cap \CBall(x,r)
  \} \,.
\end{displaymath}
The existence of $T$ follows from the fact that the set $\Sigma \cap
\CBall(x,r)$ is compact and from the fact that the function $T \mapsto
\H^{m+1}(T)$ is continuous with respect to $x_0$, \ldots, $x_{m+1}$; see, e.g.,
formula \eqref{volume_simplex}.

Renumbering the vertices of $T$ we can assume that $\hmin(T) =
\height_{m+1}(T)$. Thus, according to \eqref{volume_simplex} the largest
$m$-face of $T$ is $\conv(x_0,\ldots,x_m)$ . Let $H = \lin\{ x_1-x_0, \ldots,
x_m-x_0 \}$, so that $x_0 + H$ contains the largest $m$-face of $T$. Note that
the distance of any point $y \in \Sigma \cap \CBall(x,r)$ from the affine plane
$x_0 + H$ has to be less then or equal to $\hmin(T) = \dist(x_{m+1},x_0+H)$,
since if we could find a point $y \in \Sigma \cap \CBall(x,r)$ with
$\dist(y,x_0+H) > \hmin(T)$, then the simplex $\conv(x_0,\ldots,x_m,y)$ would
have larger $\H^{m+1}$-measure than $T$ but this is impossible due to the choice
of $T$.

Since $x \in \Sigma \cap \CBall(x,r)$, we know that $\dist(x,x_0+H) \le
\hmin(T)$. Thus, we obtain for all $y \in \Sigma \cap \CBall(x,r) $
\begin{equation}
  \dist(y,x+H) \le \dist(y,x_0+H)+\dist(x,x_0+H)\le 2 \hmin(T).
\end{equation}
Hence
\begin{equation}
  \beta_\Sigma(x,r) \le \frac{2\hmin(T)}{r} \,.
  \label{est:beta-hmin}
\end{equation}
Now we only need to estimate $\hmin(T) = \height_{m+1}(T)$ from above.  
Of~course $T$ is $(\eta,2r)$-voluminous with $\eta = {\hmin(T)}/({2r})$.
Lemma~\ref{lem:eta-d-balance} implies that
\begin{displaymath}
  \beta_\Sigma(x,r) \le \frac{2\hmin(T)}{r} =
  4\eta
  \le
  C \left( \frac E{A_{\Sigma}} \right)^{\kappa_1/(p-m)} r^{\kappa_1} \,,
\end{displaymath}
which ends the proof of the proposition.
\qed

\medskip

Now we come to the {\it  Proof of Proposition~\ref{prop:beta-est-tp}.}

Fix $x\in \Sigma$ and $r\in (0,\diam\Sigma]$. We know by definition of the
$\beta$-numbers that \mbox{$\beta\equiv\beta_\Sigma(x,r)\le{}1$}. We also know
that for any $z\in \Sigma\cap\Ball(x,\beta r/2)$ that
$$
\sup_{\Sigma\cap\Ball(x,r)}\dist(\cdot,x+H_z)\ge \beta_\Sigma(x,r)r,
$$ 
where $H_z\in G(n,m)$ denotes the image of $z$ under the mapping $H:\Sigma\to
G(n,m)$. Furthermore, for any $\epsilon>0$ we can find a point
$y_\epsilon\in\Sigma\cap\Ball(x,r)$ such that
$$
\dist(y_\epsilon,x+H_z)\ge\sup_{\Sigma\cap\Ball(x,r)}\dist(\cdot,x+H_z)-\epsilon\ge\beta_\Sigma(x,r)r-\epsilon.
$$
On the other hand, we have  by $|y_\epsilon-z|\le|y_\epsilon -x|+|x-z|\le
\tfrac 32 r$
$$
\dist(y_\epsilon,z+H_z)\le\frac 12 \GTP(z)|y_\epsilon-z|^2
\le  \GTP(z) \frac 98 r^2
$$ 
so that  we obtain
\begin{eqnarray*}
  \frac 98 r^2\GTP(z) & \ge & \dist(y_\epsilon,z+H_z)\\
  & \ge & \dist(y_\epsilon,x+H_z)-|x-z|\\
  & \ge & \beta_\Sigma(x,r)r-\epsilon-\beta_\Sigma(x,r)r/2,
\end{eqnarray*}
which upon letting $\epsilon\to 0$ leads to
$$
\GTP(z)\ge \frac 49 \beta_\Sigma(x,r)/r.
$$
Estimating the energy as
\begin{align*}
  E &\ge \int_{\Sigma\cap\Ball(x,\beta r/2)}\GTP(z)^p\,d\H^m(z)\\
  &\ge \left(\frac 49 \right)^p(\beta_\Sigma(x,r))^pr^{-p}\H^m(\Sigma\cap\Ball(x,\beta r/2))
  \ge \left(\frac 49 \right)^p \left(\frac 12\right)^{m}  A_\Sigma r^{m-p}(\beta_\Sigma(x,r))^{p+m},
\end{align*}
which gives the desired estimate for $C =4 > \left(\frac 94 \right)^{p/(p+m)} 2^{m/(p+m)}$.
\qed

\bigskip

\begin{corollary}[\textbf{$C^{1,\kappa}$ estimates, first version}]
  Let $\Sigma \subset \R^n$ be an $m$-fine set and set
  $\mathcal{K}^{(1)}(\cdot):= \GMC[\Sigma](\cdot)$ and
  $\mathcal{K}^{(2)}(\cdot):=\GTP[\Sigma] (\cdot)$. If
  \[
  \int_\Sigma \GCi(z)^p\, d\H^m(z) \le E <\infty
  \]
  holds for $i=1$ or $i=2$. Then $\Sigma$ is an embedded closed manifold of
  class $C^{1,\kappa_i}$, where
  \[
  \kappa_1=\frac{p-m}{p(m+1)+2m}\, , \qquad \kappa_2=\frac{p-m}{p+m}\, .
  \]

  Moreover we can find a radius $R =
  R(n,m,p,A_{\Sigma},M_\Sigma,E,\diam\Sigma)$ 
  and a constant $K =
  K(n,m,p,A_{\Sigma},M_\Sigma,E,\diam\Sigma)$ such that
  for each $x \in \Sigma$ 
  there is a function 
  $$
  f_x:T_x\Sigma=:P
  \cong\R^m\to P^\perp\cong\R^{n-m}
  $$
  of class $C^{1,\kappa_i}$, such that $f_x(0)=0$ and
  $D f_x(0)=0$, and
  $$
  \Sigma\cap\Ball^n(x,R)=x+\Big(\graph f_x\cap\Ball^n(0,R)\Big),
  $$
  where $\graph f_x\subset P\times P^\perp=\R^n$ denotes the graph of $f_x$, and
  $$
  \|D f_x\|_{C^{0,\kappa_i}(\overline{\Ball^{m}(0,R)},
    \R^{(n-m)\times n})}\le K.
  $$
  \label{C1kappa-first}
\end{corollary}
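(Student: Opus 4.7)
The strategy is to combine the beta-number decay from Propositions~\ref{prop:beta-est} and \ref{prop:beta-est-tp} with the $m$-fine axiom (ii) of Definition~\ref{def:fine} to reach the hypotheses of the Reifenberg parameterization result of David--Kenig--Toro (Proposition~\ref{prop:dkt-reg}), and then upgrade the qualitative conclusion to a quantitative uniform statement about graph patches.

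First I would set $C_\Sigma := C(m,p)(E/A_\Sigma)^{\kappa_i/(p-m)}$ and observe that Propositions~\ref{prop:beta-est} and \ref{prop:beta-est-tp} give $\beta_\Sigma(x,r) \le C_\Sigma r^{\kappa_i}$ for all $x \in \Sigma$ and $r \in (0,\diam\Sigma]$, whence the $m$-fine assumption yields
\[
\theta_\Sigma(x,r) \le M_\Sigma C_\Sigma r^{\kappa_i} \longrightarrow 0 \quad (r \to 0),
\]
uniformly in $x$. Thus $\Sigma$ is Reifenberg-flat with vanishing constant in the sense of Definition~\ref{def:rfvc}, and the decay $\beta_\Sigma(x,r) \le M_\Sigma C_\Sigma r^{\kappa_i}$ holds on the relevant range of $r$ (after a trivial rescaling, for $r \le 1$). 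Proposition~\ref{prop:dkt-reg} then applies and gives the first assertion: $\Sigma$ is an embedded $C^{1,\kappa_i}$-submanifold of $\R^n$ without boundary.

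Second, for the uniform graph representation I would fix an absolute threshold $\eta_0 \in (0,1)$ (depending only on $n,m$) small enough that whenever $\theta_\Sigma(x,2R) \le \eta_0$ at a point $x$ where the tangent plane $T_x\Sigma$ exists, the intersection $\Sigma \cap \Ball^n(x,R)$ coincides with $x + \graph f_x$ for some function $f_x \colon T_x\Sigma \to T_x\Sigma^\perp$ with $f_x(0) = Df_x(0) = 0$. This smallness translates into the condition $M_\Sigma C_\Sigma (2R)^{\kappa_i} \le \eta_0$, which determines $R$ in terms of $n,m,p,A_\Sigma,M_\Sigma,E,\diam\Sigma$. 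To bound $\|Df_x\|_{C^{0,\kappa_i}}$ I would use a dyadic telescoping argument at any point $y = x+(u,f_x(u))$ of the graph: letting $H_k$ denote an almost-optimal plane realizing $\beta_\Sigma(y, 2^{-k}R)$, one has $\dgras(H_k, H_{k+1}) \lesssim C_\Sigma (2^{-k}R)^{\kappa_i}$ via Proposition~\ref{prop:dist-ang}, so summing the geometric series gives $\dgras(T_x\Sigma, T_y\Sigma) \lesssim C_\Sigma |u|^{\kappa_i}$, and comparing the same estimate centered at two graph points yields $\dgras(T_{y_1}\Sigma,T_{y_2}\Sigma) \lesssim C_\Sigma |y_1-y_2|^{\kappa_i}$. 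This angular bound transfers directly to the desired H\"older estimate on $Df_x$, with constant $K = K(n,m,p,A_\Sigma,M_\Sigma,E,\diam\Sigma)$.

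The main obstacle will be the quantitative control in the last step: although Proposition~\ref{prop:dkt-reg} grants the $C^{1,\kappa_i}$ manifold structure, it does not immediately supply uniform estimates on the graph radius $R$ or on $\|Df_x\|_{C^{0,\kappa_i}}$ with the specific parameter dependence stated. Both must be extracted by careful bookkeeping through the elementary Grassmannian estimates of Section~2.1 (Propositions~\ref{prop:close-bases}, \ref{prop:gs-red}, \ref{prop:dist-ang}, \ref{prop:red-ang}) and through the telescoping comparison of approximating planes at consecutive dyadic scales, using the $m$-fine hypothesis at every step to convert bounds on $\beta_\Sigma$ into bounds on $\theta_\Sigma$ that are strong enough to preserve the graph structure as $x$ varies in $\Sigma$.
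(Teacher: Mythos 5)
Your plan follows the paper's strategy closely: the first paragraph (combining the $\beta$-decay of Propositions~\ref{prop:beta-est}--\ref{prop:beta-est-tp} with axiom (ii) of Definition~\ref{def:fine} and invoking Proposition~\ref{prop:dkt-reg}) is exactly how the paper obtains the non-quantitative conclusion, and the general idea of extracting the quantitative $C^{1,\kappa_i}$ estimate by iterating through dyadic scales with the Section~2.1 Grassmannian machinery is also the paper's route.

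However, the middle step is not sound as written. You posit that there is an absolute threshold $\eta_0(n,m)$ such that $\theta_\Sigma(x,2R) \le \eta_0$ at a single scale (at a point with a tangent plane) already forces $\Sigma \cap \Ball^n(x,R)$ to be a graph of $f_x$ over $T_x\Sigma$ --- but smallness of $\theta$ at one scale gives no control at finer scales and does not by itself preclude the projection $\pi_x : \Sigma \cap \Ball^n(x,R) \to x + T_x\Sigma$ from failing to be injective. The paper does not use any such ``absolute threshold'' fact. Instead it first establishes the tangent-plane oscillation estimate $\dgras(T_x\Sigma, T_y\Sigma) \le C|x-y|^{\kappa_i}$ (with $C$ depending on the stated parameters), and only \emph{then} deduces the graph structure by an explicit injectivity argument: if $y,z$ were distinct points with $\pi_x(y) = \pi_x(z)$, then $y-z \perp T_x\Sigma$ forces $\dgras(T_x\Sigma,T_z\Sigma) \ge 1 - C|y-z|^{\kappa_i}$, contradicting the oscillation bound unless $|x-z|$ is bounded below. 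Your telescoping argument for the oscillation bound is also mildly circular, since you carry it out at a point $y = x + (u, f_x(u))$ of the graph whose existence you have not yet established; the fix is to telescope directly on $\Sigma$. Finally, the key technical mechanism is omitted: the $m$-fine condition is used to produce actual points $z_i(r) \in \Sigma$ within $M_\Sigma C_1 r^{\kappa+1}$ of an ortho-$(r/3)$-normal basis of the optimal plane $H_x(r)$, and \emph{these points} are then projected onto $H_x(r/2)$ (or $H_y(r)$) to produce the perturbed basis to which Proposition~\ref{prop:red-ang} (not just Proposition~\ref{prop:dist-ang}) is applied. Without this device the angular comparison between approximating planes at consecutive scales does not get off the ground.
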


\begin{proof}
  The first non-quantitative part follows from our estimates on the
  $\beta$-numbers in Proposition \ref{prop:beta-est} and \ref{prop:beta-est-tp}
  in combination with \cite[Proposition~9.1]{davidkenigtoro},
  cf. Proposition~\ref{prop:dkt-reg} of the previous section. However, direct
  arguments (as in \cite[Corollary~3.18]{slawek-phd} for the global Menger
  curvature $\GMC$, and in \cite[Section 5]{svdm-tp1} for the global
  tangent-point curvature $\GTP$), lead to the full statement of that corollary
  including the uniform estimates on the H\"older-norm of $Df_x$ and on the
  minimal size of the surfaces patches of $\Sigma$ that can be represented as
  the graph of $f_x$.  Let us give the main ideas here for the convenience of
  the reader.

  Assume without loss of generality that $x=0$ and write $\kappa:=\kappa_i$ for
  any $i\in\{1,2\}$ depending on the particular choice of integrand
  $\mathcal{K}^{(i)}$.  We know from Proposition \ref{prop:beta-est} or
  \ref{prop:beta-est-tp}, respectively, that there is a constant
  $C_1=C_1(A_\Sigma,E,m,p)$ such that
  \begin{equation}
    \label{BETA}
    \beta(r):=\beta_\Sigma(0,r)\le C_1 r^\kappa\quad\Foa r\in (0,\diam\Sigma].
  \end{equation}
  Since $\Sigma\in\F(m)$ we have
  \begin{equation}\label{THETA}
    \theta(r):=\theta_\Sigma(0,r)\le M_\Sigma C_1r^\kappa\quad\Foa
    r\in (0,\diam\Sigma].
  \end{equation}
  The Grassmannian $G(n,m)$ is compact, so we find for each $r \in
  (0,\diam\Sigma]$ an $m$-plane $H_{x}(r)\in G(n,m)$ such that
  $$
  \sup_{z\in\Sigma\cap\Ball(x,r)}\dist(z,H_x(r))=\beta(r)r.
  $$
  Taking an ortho-$(r/3)$-normal basis $(v_1(r),\ldots,v_m(r))$ of
  $H_x(r)$ for any such $r\in (0,\diam\Sigma]$ we find by
  \eqref{THETA} for each $i=1,\ldots,m$, some  point
  $z_i(r)\in\Sigma$ such that
  \begin{equation}\label{z-v-dist}
    |z_i(r)-v_i(r)|\le M_\Sigma C_1r^{\kappa+1};
  \end{equation}
  see Definition \ref{def:fine}. Now there is a radius
  $R_0=R_0(A_\Sigma,E,m,p,M_\Sigma)>0$ so small that  we have the inclusion
  $\Ball(v_i(r),M_\Sigma C_1r^{\kappa+1}) \subset \Ball(0,r/2)$ for each
  $r\in (0,R_0)$ and  each $i =1,\ldots,m$, which then implies by
  \eqref{BETA} that
  \begin{equation}\label{z-Hrhalbe}
    \dist(z_i(r),H_x(r/2))\le C_1r^{\kappa+1}\quad\Foa r\in (0,R_0).
  \end{equation}
  The orthogonal projections $u_i(r):=\pi_{H_x(r/2)}(v_i(r))$
  for $i=1,\ldots,m,$ satisfy due to \eqref{z-v-dist} and
  \eqref{z-Hrhalbe}
  $$
  |u_i(r)-v_i(r)|\le |v_i(r)-z_i(r)|+\dist(z_i(r),H_x(r/2))
  \le (M_\Sigma+1)C_1r^{\kappa+1}.
  $$
  Hence there is a smaller radius $0<R_1=R_1(A_\Sigma,E,m,p,M_\Sigma) \le R_0$
  such that for all $r\in (0,R_1)$ one has 
  \begin{equation}\label{new_small}
  C_1r^\kappa < (M_\Sigma+1)C_1r^\kappa<
  \frac{1}{\sqrt{2}}-\frac 14,
  \end{equation}
  so that Proposition~\ref{prop:red-ang} is applicable to the $(r/3,0,0)$-basis
  $(v_1(r),\ldots,v_m(r))$ of $V:=H_x(r)$ and the basis $(u_1(r),\ldots,u_m(r))$
  of $U:=H_x(r/2)$ with $\vartheta:=C_1r^\kappa.$ (Notice that condition
  \eqref{cond:eps-del} in Proposition \ref{prop:red-ang} is automatically
  satisfied since $\epsilon=\delta=0$ in the present situation.)  Consequently,
  \begin{equation}\label{angle-growth-estimate}
    \ang (H_x(r),H_x(r/2)) \le C_4C_1r^\kappa\quad\Foa r\in (0,R_1).
  \end{equation}
  Iterating this estimate, one can show that the sequence of $m$-planes
  $(H_x(r/2^N))$ is a Cauchy sequence in $G(n,m)$, hence converges as
  $N\to\infty$ to a limit $m$-plane, which must coincide with the already
  present tangent plane $T_0\Sigma$ at $x=0$, and the angle estimate
  \eqref{angle-growth-estimate} carries over to
  \begin{equation}
    \label{THang}
    \ang (T_x\Sigma,H_x(r))\le Cr^\kappa\quad\Foa r\in (0,R_1) \,.
  \end{equation}

  Let $y \in \Sigma$ be such that $|y-x| = r/2$ and set $w_i(r) =
  \pi_{H_y(r)}(v_i(r))$. We have $z_i(r) \in \Ball(y,r)$, so
  \begin{displaymath}
    \dist(z_i(r), H_y(r)) \le \beta_{\Sigma}(y,r) r \le C_1 r^{\kappa + 1} \,,
  \end{displaymath}
  \begin{displaymath}
    \text{hence} \quad
    |v_i(r) - w_i(r)| \le |v_i(r) - z_i(r)| + \dist(z_i(r), H_y(r))
    \le (M_\Sigma + 1)C_1 r^{\kappa + 1} \,.
  \end{displaymath}
  Applying once again Proposition~\ref{prop:red-ang} -- which is possible
  due to \eqref{new_small} -- we obtain the inequality
  \begin{displaymath}
  \ang(H_x(r), H_y(r)) \le  C_4(M_\Sigma+1)C_1 r^{\kappa} = \bar{C} |x-y|^{\kappa} \,.
  \end{displaymath}
This together with \eqref{THang} (which by symmetry also holds in $y$ replacing 
  $x$) leads to  the desired local estimate
  for the oscillation of tangent planes
  \begin{equation}
    \label{est:tanosc}
    \ang (T_x\Sigma, T_y\Sigma)\le C |x-y|^\kappa\quad\Foa |x-y|\le R_1/2,
  \end{equation}
  where $C=C(E,A_\Sigma,m,p,M_\Sigma)$ and $R_1=R_1(E,A_\Sigma,m,p,M_\Sigma)$ do {\it
    not} depend on the choice of $x,y\in\Sigma$.

  Next we shall find a radius $R_2=R_2(E,A_\Sigma,m,p,M_\Sigma)$ such that for
  each $x \in \Sigma$ the affine projection
  \begin{displaymath}
    \pi_x : \Sigma \cap \Ball(x,R_2) \to x + T_x\Sigma
  \end{displaymath}
  is injective. This will prove that $\Sigma \cap \Ball(x,R_2)$ coincides with
  a~graph of some function $f_x$, which is $C^{1,\kappa}$-smooth
  by~\eqref{est:tanosc}.

  Assume that there are two distinct points $y,z \in \Sigma \cap \Ball(x,R_1)$
  such that $\pi_x(y) = \pi_x(z)$. In other words $(y-z) \perp T_x\Sigma$. Since
  $y$ and $z$ are close to each other the vector $(y-z)$ should form a small
  angle with $T_z\Sigma$ but then $\ang(T_z\Sigma,T_x\Sigma)$ would be large and
  due to \eqref{est:tanosc} this can only happen if one of $y$ or $z$ is far
  from $x$. To~make this reasoning precise assume that $|x-y| \le |x-z|$ and set
  $H_x = H_x(|y-x|)$. Employing~\eqref{BETA} and~\eqref{THang} we get
  \begin{align*}
    |Q_{T_x\Sigma}(y-x)|
    &\le |Q_{H_x}(y-x)| + |Q_{T_x\Sigma}(y-x) - Q_{H_x}(y-x)| \\
    &\le \beta(x,|y-x|) |y-x| + \ang(T_x\Sigma,H_x) |y-x|
    \le C |y-x|^{1+\kappa} \le C |z-x|^{1+\kappa} \,,
  \end{align*}
  where $C$ depends only on $E$, $A_\Sigma$, $m$ and $p$. The same applies to
  $(z-x)$ so we also have
  \begin{displaymath}
    |Q_{T_x\Sigma}(z-x)| \le C |z-x|^{1+\kappa} \,.
  \end{displaymath}
  Next we estimate
  \begin{equation}
    \label{est:z-y}
    |z-y| = |Q_{T_x\Sigma}(z-y)|
    \le | Q_{T_x\Sigma}(z-x)| + |Q_{T_x\Sigma}(y-x)| 
    \le 2 C |z-x|^{1+\kappa} \,. 
  \end{equation}
  Setting $H_z = H_z(|y-z|)$ and repeating the same calculations we obtain
  \begin{displaymath}
    \dist(y-z,T_z\Sigma) = |Q_{T_z\Sigma}(y-z)| \le C |y-z|^{1+\kappa} \,.
  \end{displaymath}
  This gives
  \begin{align*}
    \ang(T_x\Sigma,T_z\Sigma) &= \| Q_{T_x\Sigma} - Q_{T_z\Sigma} \|
    \ge |Q_{T_x\Sigma}(z-y) - Q_{T_z\Sigma}(z-y)| |z-y|^{-1} \\
    &\ge \left( |z-y| - |Q_{T_z\Sigma}(z-y)| \right) |z-y|^{-1}
    \ge 1 - C |y-z|^{\kappa} \,.
  \end{align*}
  On the other hand by~\eqref{est:tanosc}
  \begin{math}
    \ang(T_x\Sigma,T_z\Sigma) \le C |x-z|^{\kappa} \,.
  \end{math}
  Hence, applying~\eqref{est:z-y} we obtain
  \begin{displaymath}
    C |x-z|^{\kappa} \ge 1 - \tilde{C} |y-z|^{\kappa} 
    \ge 1 - \bar{C} |x-z|^{\kappa+\kappa^2} \\
    \, \iff \, 
    |x-z| \ge \left(C + \bar{C}|x-z|^{\kappa^2} \right)^{-1/\kappa} \,.
  \end{displaymath}
  This shows that if $(y-z) \perp T_x\Sigma$ then the point $z$ has to be far
  from $x$. We set 
  \begin{displaymath}
    R_2 = \min\left(1, (C + \bar{C})^{-1/\kappa}\right)
  \end{displaymath}
  and this way we make sure that $\pi_x : \Sigma \cap \Ball(x,R_2) \to x +
  T_x\Sigma$ is injective for each $x \in \Sigma$, hence $\Sigma \cap
  \Ball(x,R_2)$ is a graph of some function $f_x : T_x\Sigma \cap \Ball(0,R_2)
  \to (T_x\Sigma)^{\perp}$.  

  The oscillation estimate~\eqref{est:tanosc} leads with standard arguments (as,
  e.g., presented in \cite[Section 5]{svdm-tp1}) to the desired uniform
  $C^{1,\kappa}$-estimates for $f_x$ on balls in $T_x\Sigma$ of radius $R_2$
  which depends on $E,A_\Sigma,p,m,M_\Sigma$, but not on the particular choice
  of the point $x$ on $\Sigma$.
\end{proof}

\begin{remark}
  \label{rem:Msigma}
  The statement of Corollary \ref{C1kappa-first} can a posteriori be sharpened:
  One can show that one can  make the constants $R$ and $K$ independent of
  $M_{\Sigma}$. This was carried out in detail in the first author's
  doctoral thesis; see \cite[Theorem 2.13]{slawek-phd}, so we will restrict
  to a brief sketch of the argument here. Assume as before that $x=0$ and
  notice that $\beta(r)=\beta(0,r) \to 0$ uniformly (independent of the point
  $x$ and also independent of $M_\Sigma$ according to \eqref{BETA}). Since at
  this stage we know that $\Sigma $ is a $C^{1,\kappa}$-submanifold of $\R^n$
  without boundary, it is clearly also admissible in the sense of
  \cite[Definition 2.9]{svdm-tp1}. In particular $\Sigma$ is locally flat around
  each point $y\in\Sigma$ -- it is actually close to the tangent $m$-plane
  $T_y\Sigma$ near $y$ -- and $\Sigma $ is nontrivially linked with sufficiently
  small $(n-m-1)$-spheres contained in the orthogonal complement of $T_y\Sigma.$
  Let $H_x(r)$ for $r\in (0,\diam\Sigma]$ be as in the proof of Corollary
  \ref{C1kappa-first} the optimal $m$-plane through $x=0$ such that
  \begin{equation}\label{CLOSE}
    \dist(y,x+H_x(r)) \le \beta(r)r
    \quad \Foa y \in \Sigma \cap \Ball(0,r).
  \end{equation}
  One can use now the uniform estimate \eqref{BETA} (not depending on
  $M_\Sigma$) to prove that there is a radius $R_3=R_3(E,A_\Sigma,m,p)$ such
  that the angle $\ang(T_0\Sigma,H_x(r))$ is for each $r \in (0,R_3)$ so small
  that, for any given $p\in H_x(r)\cap\Ball(0,R_3)$, one can deform the linking
  sphere in the orthogonal complement of $T_0\Sigma$ with a homotopy to a small
  sphere in $p+H_x(r)^\perp$ without ever hitting $\Sigma$. Because of the
  homotopy invariance of linking one finds also this new sphere nontrivially
  linked with $\Sigma$. This implies in particular by standard degree arguments
  the existence of a point $z\in\Sigma$ contained in the $(n-m)$-dimensional
  disk in $p+H_x(r)^\perp$ spanned by this new sphere; see,
  e.g.~\cite[Lemma~3.5]{svdm-tp1}. On the other hand by \eqref{CLOSE}
  $\Sigma\cap \Ball(0,r)$ is at most $\beta(r)r$ away from $H_x(r)$ which
  implies now that this point $z\in\Sigma$ must satisfy $|z-p|\le\beta(r)r$.
  This gives the uniform estimate $\theta(r)\le C\beta(r)$ for all  $r < R_3$
  and some absolute constant $C$.
\end{remark}

Now we know that the estimates in Corollary \ref{C1kappa-first} do not depend on
$M_\Sigma$. This constant may be replaced by an absolute one if we are only
working in small scales. In the next section we show that this can be further
sharpened: $R$ and $K$ depend in fact only on $m$, $p$ and $E$, but {\it not} on
the constant~$A_\Sigma$.

\subsection{Uniform Ahlfors regularity and its consequences}
\label{sec:tangent-planes}

In this section, we show that the $L^p$-norms of the global curvatures $\GMC$
and $\GTP$ control the length scale in which bending (or `hairs', narrow
tentacles, long thin tubes etc.)  can occur on $\Sigma$. In particular, there is
a number $R$ depending \emph{only} on $n,m,p$ and $E$, where $E$ is any constant
dominating $\|\GMC\|_{L^p}^p$ or $\|\GTP\|_{L^p}^p$, such that for all $x\in
\Sigma$ and all $r\le R$ the intersection $\Sigma\cap \Ball^n(x,r)$ is congruent
to $\graph f_x\cap \Ball^n(x,r)$, where $f_x\colon \R^m\to\R^{n-m}$ is a
$C^{1,\kappa_i}$ function (with small $C^1$ norm, if one wishes).  Note that $R$
does not at all depend on the shape or on other properties of $\Sigma$, just on
its energy value, i.e. on the $L^p$-norm of $\GMC$ or of $\GTP$.

By the results of the previous subsection, we already know that $\Sigma$ is an
embedded $C^1$ compact manifold without boundary. This is assumed throughout
this subsection.

The crucial tool needed to achieve such control over the shape of $\Sigma$ is
the following.

\begin{theorem}[\textbf{Uniform Ahlfors regularity}]
  \label{thm:UAR} For each $p>m$ there exists a constant 
  $C(n,m,p)$ with the following property. 
  If $\|\GMC\|_{L^p}$ or $\|\GMC\|_{L^p}$ is less than $E^{1/p}$
  for some $E<\infty$, then for every $x\in\Sigma$
  \begin{equation}
    \label{ineq:UAR}
    \H^m(\Sigma\cap \Ball^n(x,r))\ge \frac 12 \omega_mr^m \qquad\mbox{for all $0<r\le R_0$,}
  \end{equation}
  where
  $	
  R_0= C(n,m,p) E^{-1/(p-m)}
  $
  and $\omega_m=\H^m(\Ball^m(0,1)).$
\end{theorem}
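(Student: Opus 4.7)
By Corollary~\ref{C1kappa-first}, $\Sigma$ is already known to be an embedded compact $C^{1,\kappa_i}$-submanifold of $\R^n$ without boundary; in particular every point has a local graph representation, so \eqref{ineq:UAR} holds at each point for all sufficiently small $r$, where ``sufficiently small'' may a priori depend on $\Sigma$ (and in particular on $A_\Sigma$). The plan is to bootstrap this information and show that the scale at which~\eqref{ineq:UAR} holds is controlled purely by $n, m, p$ and $E$. Set
\[
R^* := \sup\bigl\{ r > 0 : \H^m(\Sigma\cap\Ball^n(y,s)) \ge \tfrac 12 \omega_m s^m \text{ for all } y\in\Sigma \text{ and all } s\le r \bigr\}.
\]
A standard lower-semi-continuity-plus-compactness argument (using that $\H^m(\Sigma \cap \Ball^n(\cdot,s))$ is lower semi-continuous on the compact set $\Sigma$) produces a point $x_0 \in \Sigma$ at which the Ahlfors bound is saturated at scale $R^*$, i.e.\ $\H^m(\Sigma \cap \Ball^n(x_0, R^*)) = \tfrac 12 \omega_m (R^*)^m$. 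The idea is to assume $R^* < C(n,m,p) E^{-1/(p-m)}$ for some $C(n,m,p)$ to be chosen small, and to derive a contradiction.

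\textbf{Ahlfors-free decay of $\beta$.} Inspecting the proofs of Lemma~\ref{lem:eta-d-balance} and of Proposition~\ref{prop:beta-est-tp}, one sees that Ahlfors regularity of $\Sigma$ is invoked only at radii bounded by the scale at which $\beta$ is being estimated. Consequently, for every $y \in \Sigma$ and every $r \le R^*$ one may replace $A_\Sigma$ by the absolute constant $\omega_m / 2$ and conclude
\[
\beta_\Sigma(y, r) \le C_1(m, p) \bigl( 2 E / \omega_m \bigr)^{\kappa/(p-m)} r^\kappa,
\]
with $\kappa$ equal to $\kappa_1$ or $\kappa_2$ according to which energy is finite. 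By the hypothesis on $R^*$, the right-hand side at $y = x_0$ and $r = R^*$ is bounded by an arbitrarily small prescribed $\varepsilon > 0$ once $C(n,m,p)$ is chosen small enough.

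\textbf{Linking and projection.} Because $\Sigma$ is an embedded compact $C^{1,\kappa}$-manifold without boundary, small $(n{-}m{-}1)$-dimensional spheres lying in $x_0 + T_{x_0}\Sigma^\perp$ and encircling $x_0$ are nontrivially linked with $\Sigma$. Let $H \in G(n,m)$ attain the infimum in the definition of $\beta_\Sigma(x_0, R^*)$, so that $\Sigma \cap \Ball^n(x_0, R^*)$ lies in the tube $\{y : \dist(y, x_0 + H) \le \varepsilon R^*\}$. The iterative use of Proposition~\ref{prop:red-ang} carried out in the proof of Corollary~\ref{C1kappa-first}, run with $A_\Sigma$ replaced by $\omega_m / 2$, then gives $\dgras(T_{x_0}\Sigma, H) = O(\varepsilon)$. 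For $\varepsilon$ sufficiently small and for every $p \in (x_0 + H) \cap \Ball^n(x_0, (1-2\varepsilon) R^*)$, one can homotope a small linking sphere from $x_0 + T_{x_0}\Sigma^\perp$ to a sphere in $p + H^\perp$ through the $\varepsilon R^*$-tube around $x_0 + H$ without ever meeting $\Sigma$. Homotopy invariance of linking combined with a standard degree argument (cf.\ the proof of \cite[Lemma~3.5]{svdm-tp1} and Remark~\ref{rem:Msigma}) produces $z \in \Sigma \cap (p + H^\perp)$ with $|z - p| \le \varepsilon R^*$, hence $z \in \Sigma \cap \Ball^n(x_0, R^*)$.

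\textbf{Contradiction and main obstacle.} It follows that the orthogonal projection $\pi_H$ (onto $x_0 + H$) restricted to $\Sigma \cap \Ball^n(x_0, R^*)$ surjects onto the $m$-disk $(x_0 + H) \cap \Ball^n(x_0, (1 - 2\varepsilon) R^*)$. Since $\pi_H$ is $1$-Lipschitz,
\[
\tfrac 12 \omega_m (R^*)^m = \H^m\bigl(\Sigma \cap \Ball^n(x_0, R^*)\bigr) \ge (1 - 2\varepsilon)^m \omega_m (R^*)^m,
\]
which is false for $\varepsilon$ small enough. The resulting contradiction yields $R^* \ge C(n,m,p) E^{-1/(p-m)}$ and hence~\eqref{ineq:UAR}. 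The main obstacle lies in the linking-through-a-tube step: verifying that nontrivial linking of $\Sigma$ with small spheres in $x_0 + T_{x_0}\Sigma^\perp$ survives the homotopy into $p + H^\perp$ inside the $\varepsilon R^*$-tube, once $\dgras(T_{x_0}\Sigma, H)$ and $\beta_\Sigma(x_0, R^*)$ are below a threshold depending only on $n$ and $m$. Given the topological tools already developed in \cite{svdm-tp1, slawek-phd} (and recalled in Remark~\ref{rem:Msigma}), the execution should be routine.
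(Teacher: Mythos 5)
Your proposal is correct in essence, but it takes a genuinely different route from the paper. The paper proves Theorem~\ref{thm:UAR} via the stopping-distance machinery of Propositions~\ref{prop:big-proj-fat-simp} and~\ref{mainlemma-tp} (imported from \cite{slawek-phd} and \cite{svdm-tp1}): for every $x\in\Sigma$ one constructs a maximal scale $d_s(x)$ below which large projections exist (hence \eqref{eq:uahlreg} holds), \emph{and simultaneously} produces either an $(\eta,d_s(x))$-voluminous simplex ($\GMC$ case) or a high-energy couple ($\GTP$ case), so that $\GMC\gtrsim 1/d_s(x)$ or $\GTP\gtrsim 1/d_s(x)$ on a set of measure $\gtrsim d_s(x)^m$. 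The energy bound then directly forces $d_s(x)\gtrsim E^{-1/(p-m)}$, and one takes the infimum. Your proof replaces that explicit construction with an extremal argument: you locate a point $x_0$ where the Ahlfors bound saturates at the critical scale $R^*$, observe (correctly) that the $\beta$-decay proofs of Propositions~\ref{prop:beta-est} and~\ref{prop:beta-est-tp} only invoke Ahlfors regularity at scales strictly below the one being estimated (so $A_\Sigma$ may be replaced by $\omega_m/2$ for $r\le R^*$), and derive a contradiction via linking and a 1-Lipschitz projection. What the paper's route buys is that the ``curvature must be large at the critical scale'' step is packaged once and for all inside the stopping-distance propositions; what your route buys is avoiding the intricate iterative cone construction altogether, at the cost of re-deriving flatness at scale $R^*$ from the $\beta$-decay estimates and re-running the linking-through-a-tube argument of Remark~\ref{rem:Msigma}. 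You correctly flag the linking step as the place where care is needed; given that by Corollary~\ref{C1kappa-first} the set $\Sigma$ is already an embedded compact $C^1$ manifold, and that $\dgras(T_{x_0}\Sigma,H)$ and $\beta_\Sigma(x_0,R^*)$ are made as small as one wishes, the homotopy of the linking sphere into $p+H^\perp$ staying outside the $\varepsilon R^*$-tube (not through it, as your phrasing suggests) while remaining in $\Ball^n(x_0,R^*)$ is indeed routine; the detailed execution is essentially what Remark~\ref{rem:Msigma} already carries out, so your deferral is justified. One small point of care that you should make explicit: the existence of the saturating point $x_0$ with $\H^m(\Sigma\cap\Ball^n(x_0,R^*))=\tfrac12\omega_m(R^*)^m$ follows by combining continuity from below of $\H^m$ along increasing open balls (giving $\ge$) with lower semicontinuity of $(y,s)\mapsto\H^m(\Sigma\cap\Ball^n(y,s))$ applied to a sequence witnessing the failure of the bound at scales $s_k\downarrow R^*$ (giving $\le$).
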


The proof of Theorem~\ref{thm:UAR} is similar to the proof of Theorem~3.3 in
\cite{svdm-surfaces} where Menger curvature of surfaces in $R^3$ has been
investigated. This idea has been later reworked and extended in various settings
to the case of sets having codimension larger than 1.

Namely, one demonstrates that each $\Sigma$ with finite energy cannot penetrate
certain conical regions of $\R^n$ whose size \emph{depends solely on the
  energy}. The construction of those regions has algorithmic nature. Proceeding
iteratively, one constructs for each $x\in \Sigma$ an increasingly complicated
set $S$ which is centrally symmetric with respect to $x$ and its intersection
with each sphere $\partial \Ball^n(x,r)$ is equal to the union of two or four
spherical caps. The size of these caps is proportional to $r$ but their position
may change as $r$ grows from $0$ to the desired large value, referred to as
\emph{the stopping distance} $d_s(x)$.  The interior of $S$ contains no points
of $\Sigma$ but it contains numerous $(n-m-1)$-dimensional spheres which are
nontrivially linked with $\Sigma$. Due to this, for each $r$ below the stopping
distance, $\Sigma\cap \Ball^n(x,r)$ has large projections onto some planes in
$G(n,m)$. However, there are points of $\Sigma$ on $\partial S$, chosen so that
the global curvature $\GMC(x)$, or $\GTP(x)$, respectively, must be $\gtrsim
1/d_s(x)$.

To avoid entering into too many technical details of such a construction, we
shall quote almost verbatim two purely geometric lemmata from our previous work
that are independent of any choice of energy, and indicate how they are used in
the proof of Theorem~\ref{thm:UAR}.

\subsubsection{The case of global Menger curvature}

Recall the Definition~\ref{def:voluminous} of the class $\V(\eta, d)$ of
$(\eta,d)$-voluminous simplices. The following proposition comes from the
doctoral thesis of the first author, see \cite[Proposition 2.5]{slawek-phd}.

\begin{proposition}
  \label{prop:big-proj-fat-simp}
  Let $\delta \in (0,1)$ and $\Sigma$ be an embedded $C^1$ compact manifold
  without boundary. There exists a real number $\eta = \eta(\delta,m) \in (0,1)$
  such that for every point $x_0 \in \Sigma$ there is a stopping distance $d =
  d_s(x_0) > 0$, and an $(m+1)$-tuple of points $(x_1, x_2, \ldots, x_{m+1}) \in
  \Sigma^{m+1}$ such that
  \begin{displaymath}
    T = \mathrm{conv}\{ x_0, \ldots, x_{m+1}\} \in \V(\eta, d) \,.
  \end{displaymath}
  Moreover, for all $\rho \in (0, d)$ there exists an $m$-dimensional subspace
  $H = H(\rho) \in G(n,m)$ with the property
  \begin{equation}
    \label{eq:big-proj}
    (x_0 + H) \cap \Ball^n(x_0, \sqrt{1 - \delta^2} \rho)
    \subset \pi_{x_0 + H}(\Sigma \cap \Ball^n(x_0, \rho)) \,.
  \end{equation}
\end{proposition}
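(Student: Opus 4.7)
Fix $\delta \in (0,1)$, $x_0 \in \Sigma$ and a small $\eta = \eta(\delta,m) \in (0,1)$ to be pinned down at the end. I would \emph{define} the stopping distance by
\begin{displaymath}
d_s(x_0) := \inf\bigl\{\, d > 0 \;:\; \exists\,(x_1,\ldots,x_{m+1}) \in \Sigma^{m+1} \text{ with } \conv(x_0,x_1,\ldots,x_{m+1}) \in \V(\eta,d) \,\bigr\}\,.
\end{displaymath}
The $C^1$-smoothness of $\Sigma$ near $x_0$ traps $\Sigma \cap \Ball(x_0,d)$ in an $o(d)$-thick slab around $x_0 + T_{x_0}\Sigma$ for small $d$, forcing $\hmin = o(d)$ on any $(m+1)$-simplex with vertices in that slab; no such simplex can be $(\eta,d)$-voluminous, hence $d_s(x_0) > 0$. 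Compactness of $\Sigma$ together with continuity of $\diam$ and $\hmin$ in the vertex coordinates shows that the infimum is attained, delivering the voluminous simplex $T \in \V(\eta, d_s(x_0))$ required by the first assertion.

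\textbf{Slab confinement for $\rho < d_s(x_0)$.} I would pick $(y_1,\ldots,y_{m+1}) \in (\Sigma \cap \CBall(x_0,\rho))^{m+1}$ that maximises $\H^{m+1}(\conv(x_0,y_1,\ldots,y_{m+1}))$, and call the resulting simplex $T^*$. Since $T^*$ has diameter at most $2\rho$ but fails to be $(\eta,\rho)$-voluminous (after rescaling $\eta$ to absorb the factor $2$, this follows from $\rho < d_s(x_0)$), a relabelling of vertices gives $\hmin(T^*) = \height_{m+1}(T^*) < \eta\rho$. The formula $\H^{m+1}(T^*) = \tfrac{1}{m+1}\,\height_{m+1}(T^*)\,\H^m(\face_{m+1}(T^*))$ combined with the maximality of $T^*$ shows that if some $z \in \Sigma \cap \Ball(x_0,\rho)$ satisfied $\dist(z, \aff\{x_0,y_1,\ldots,y_m\}) > \height_{m+1}(T^*)$, replacing $y_{m+1}$ by $z$ would strictly increase $\H^{m+1}$; this is impossible, so
\begin{displaymath}
\dist(z, \aff\{x_0,y_1,\ldots,y_m\}) \le \eta\rho \qquad \text{for every } z \in \Sigma \cap \Ball(x_0,\rho)\,.
\end{displaymath}
Setting $H(\rho) := \lin\{y_i - x_0 : 1 \le i \le m\}$ -- which is genuinely $m$-dimensional except in degenerate cases that can be dispatched separately -- the last inequality reads $\dist(z, x_0 + H(\rho)) \le \eta\rho$, and Pythagoras yields $|\pi_{x_0+H(\rho)}(z) - x_0|^2 \ge |z - x_0|^2 - \eta^2 \rho^2$ for each such $z$.

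\textbf{Surjectivity of the projection -- the main obstacle.} It remains to show $\pi_{x_0+H(\rho)}(\Sigma \cap \Ball(x_0,\rho)) \supseteq (x_0+H(\rho)) \cap \Ball(x_0, \sqrt{1-\delta^2}\,\rho)$. The same volume-maximality reasoning, applied one dimension lower to the $m$-face $\conv(x_0,y_1,\ldots,y_m) \subset x_0 + H(\rho)$, forces $|y_i - x_0| \gtrsim \rho$, so the $y_i$ themselves already mark off points near the boundary of the $\rho$-disk inside $x_0 + H(\rho)$. Because $\Sigma$ is a \emph{closed} $C^1$ $m$-manifold without boundary and its intersection with $\Ball(x_0,\rho)$ is confined to an $\eta\rho$-slab, $\pi_{x_0+H(\rho)}$ restricted to $\Sigma \cap \Ball(x_0,\rho)$ is a local diffeomorphism away from a set of critical points; in particular its $m$-dimensional image is an open-dense set whose topological boundary can arise only from $\Sigma \cap \partial\Ball(x_0,\rho)$, and by the Pythagorean bound such boundary points project to distance at least $\sqrt{1-\eta^2}\,\rho$ from $x_0$. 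Hence, taking $\eta \le \delta$ (and shrinking $\eta$ further to absorb the constants from the previous paragraph) ensures that no image-boundary enters the disk of radius $\sqrt{1-\delta^2}\,\rho$, and a standard degree / open-and-closed argument then delivers the inclusion \eqref{eq:big-proj}. This last step is the main technical hurdle: the combination of slab confinement with the closed-manifold hypothesis is what ultimately fixes the quantitative choice $\eta = \eta(\delta, m)$.
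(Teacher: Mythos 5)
Your approach is genuinely different from the paper's. The paper (following \cite[Proposition~2.5]{slawek-phd}) produces $d_s(x_0)$ by an \emph{iterative construction}: starting from $x_0$ it builds, scale by scale, a centrally symmetric conical set $S$ whose interior avoids $\Sigma$ but contains $(n-m-1)$-spheres nontrivially linked with $\Sigma$; the stopping distance is the scale at which $\Sigma$ first hits $\partial S$, and at that moment one reads off the voluminous simplex, while for smaller scales the linking spheres \emph{force} the large projection. You instead \emph{define} $d_s(x_0)$ as an infimum and extract the plane $H(\rho)$ from a volume-maximizing simplex, which is cleaner for the first assertion and for the slab confinement, but it puts all of the weight on the projection step -- and that is exactly where your argument has a gap.

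The decisive claim, ``its $m$-dimensional image is an open-dense set whose topological boundary can arise only from $\Sigma \cap \partial\Ball(x_0,\rho)$,'' is not justified and is false in general: the image of $\pi_{x_0+H(\rho)}$ restricted to a $C^1$ $m$-manifold is not open near critical points, and critical values (fold loci) do contribute to its topological boundary (already the orthogonal projection of $\S^2$ onto a plane has all of its image-boundary coming from critical points). Slab confinement only controls $\dist(z,x_0+H(\rho))$; it says nothing about the tangent planes $T_z\Sigma$, which may be nearly orthogonal to $H(\rho)$ even inside a thin slab for a merely $C^1$ manifold. Consequently an ``open-and-closed'' argument does not get off the ground: the image need not be open. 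What is actually needed -- and what the paper supplies via the linking construction -- is a mod~$2$ degree argument for $\pi_{H(\rho)}$ restricted to the compact manifold-with-boundary $\Sigma\cap\CBall(x_0,\rho)$, for which the slab confinement correctly shows that the boundary $\Sigma\cap\partial\Ball(x_0,\rho)$ projects outside $\Ball^m(x_0,\sqrt{1-\eta^2}\rho)$. But then one must still prove the degree is nonzero, e.g.\ by computing it at (a regular value near) $x_0$; that in turn requires knowing that $H(\rho)$ is quantitatively close to $T_{x_0}\Sigma$, so that $\pi_{H(\rho)}$ is a submersion on the graph patch of $\Sigma$ through $x_0$. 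Nothing in your construction establishes that closeness: $H(\rho)$ is the span of edges of one volume-maximizing simplex at scale $\rho$, and the slab estimate at a single scale $\rho$ does not control the tangent plane at $x_0$ (the local graph near $x_0$ could sit in a much thinner sub-slab whose direction is a priori unrelated to $H(\rho)$). The paper's iterative construction is designed precisely so that the linking of the reference $(n-m-1)$-sphere with $\Sigma$ is nontrivial at the base scale and then propagated by homotopy; your shortcut omits both the base-case linking and its propagation. There is also a small bookkeeping issue -- the volume-maximizing simplex with vertices in $\CBall(x_0,\rho)$ has diameter $\le 2\rho$, so ruling it out as voluminous requires $2\rho<d_s(x_0)$, not $\rho<d_s(x_0)$, forcing a rescaling of $\eta$ or of the radius -- but that is cosmetic compared with the degree/linking gap.
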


Fixing $\delta=\delta(m)\in (0,\sqrt{1 - 4^{-1/m}})$ small enough, we obtain
$\eta=\eta(m)$ depending on $m$ only.  This yields the following.
\begin{corollary}
  \label{cor:uahlreg}
  For any $x_0 \in \Sigma$ and any $\rho \le d_s(x_0)$ we have
  \begin{equation}
    \label{eq:uahlreg}
    \H^m(\Sigma \cap \Ball(x_0,\rho)) 
    \ge (1 - \delta^2)^{ m/2} \omega_m \rho^m
    \ge \frac 12 \omega_m \rho^m\,.
  \end{equation}
\end{corollary}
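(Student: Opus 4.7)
The plan is to read Corollary \ref{cor:uahlreg} as an immediate consequence of Proposition \ref{prop:big-proj-fat-simp} combined with the fact that the orthogonal projection $\pi_{x_0+H}\colon \R^n \to x_0 + H$ is $1$-Lipschitz, and hence does not increase $m$-dimensional Hausdorff measure. The large-projection inclusion \eqref{eq:big-proj} has already been arranged in the proposition so that the flat disc $(x_0+H) \cap \Ball^n(x_0, \sqrt{1-\delta^2}\rho)$, whose $\H^m$-measure is computed directly, is contained in $\pi_{x_0+H}(\Sigma \cap \Ball^n(x_0,\rho))$. Hence all the real geometric work is already done in Proposition \ref{prop:big-proj-fat-simp}, and what remains is a projection/Lipschitz argument together with a numerical check on the choice of $\delta$.

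More concretely, I would fix $x_0 \in \Sigma$ and $\rho \in (0, d_s(x_0)]$ (the case $\rho = d_s(x_0)$ being obtained by continuity from $\rho < d_s(x_0)$, or directly since the inclusion is closed), and let $H = H(\rho) \in G(n,m)$ be the plane provided by Proposition \ref{prop:big-proj-fat-simp}. Because $\pi_{x_0+H}$ is $1$-Lipschitz one has $\H^m\bigl(\pi_{x_0+H}(A)\bigr) \le \H^m(A)$ for every $A \subset \R^n$. Applying this to $A := \Sigma \cap \Ball^n(x_0,\rho)$ and using the inclusion \eqref{eq:big-proj} gives
\begin{displaymath}
  \H^m\bigl(\Sigma \cap \Ball^n(x_0,\rho)\bigr)
  \ge \H^m\bigl(\pi_{x_0+H}(\Sigma \cap \Ball^n(x_0,\rho))\bigr)
  \ge \H^m\bigl((x_0+H) \cap \Ball^n(x_0, \sqrt{1-\delta^2}\rho)\bigr).
\end{displaymath}
Since $(x_0+H) \cap \Ball^n(x_0, \sqrt{1-\delta^2}\rho)$ is an $m$-dimensional Euclidean ball of radius $\sqrt{1-\delta^2}\,\rho$ in the $m$-plane $x_0+H$, its $\H^m$-measure is exactly $\omega_m (1-\delta^2)^{m/2} \rho^m$, which proves the first inequality in \eqref{eq:uahlreg}.

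For the second inequality, recall that $\delta=\delta(m)$ was fixed in the range $\bigl(0, \sqrt{1-4^{-1/m}}\bigr)$ just before the corollary. This yields $\delta^2 < 1-4^{-1/m}$, equivalently $1-\delta^2 > 4^{-1/m}$, and raising both sides to the power $m/2$ gives $(1-\delta^2)^{m/2} > 4^{-1/2} = 1/2$. Multiplying through by $\omega_m \rho^m$ then produces the second inequality. There is no serious obstacle here: the only ``choice'' is the calibration of $\delta$ in terms of $m$, which was already done in the paragraph preceding the statement so that the resulting Ahlfors constant is a clean $\frac12 \omega_m$.
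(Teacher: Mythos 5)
Your proof is correct and matches the paper's intent exactly: the paper gives no written proof of the corollary, simply stating it immediately after Proposition~\ref{prop:big-proj-fat-simp} as a direct consequence, and your Lipschitz-projection argument (together with the numerical check on $\delta$ and the harmless limiting step to cover $\rho = d_s(x_0)$) is precisely the missing link the authors leave implicit.
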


Moreover, we can provide a lower bound for all stopping distances. For this,
we~need an elementary consequence of the definition of voluminous simplices:
\begin{fact}
  \label{fact:reg-curv}
  If $T=\conv(x_0,\ldots,x_{m+1}) \in \V(\eta,d)$ then by
  \eqref{volume_simplex_estimate}
  \begin{equation}
    \label{est:curv}
    K(x_0,\ldots,x_{m+1}) 
    \ge \frac{(\eta d)^{m+1}}{(m+1)! (d)^{m+2}} 
    = \frac{\eta^{m+1}}{(m+1)!d} \,.
  \end{equation}
\end{fact}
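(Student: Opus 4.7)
The plan is to unpack the definitions of $\DC$ and of the class $\V(\eta,d)$, and then to chain together the lower bound on $\H^{m+1}(T)$ from \eqref{volume_simplex_estimate} with the upper bound on the diameter built into membership in $\V(\eta,d)$. There is no substantial obstacle here; the fact is essentially a tautological consequence of the two definitions plus the earlier volume estimate.

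More concretely, I would first recall from \eqref{Menger_simplex} that
\begin{displaymath}
\DC(x_0,x_1,\ldots,x_{m+1}) = \frac{\H^{m+1}(T)}{\diam(\{x_0,\ldots,x_{m+1}\})^{m+2}}\,,
\end{displaymath}
so the task reduces to bounding the numerator from below and the denominator from above. For the numerator, the hypothesis $T \in \V(\eta,d)$ gives $\hmin(T) \ge \eta d$, and plugging this into \eqref{volume_simplex_estimate} yields
\begin{displaymath}
\H^{m+1}(T) \;\ge\; \frac{\hmin(T)^{m+1}}{(m+1)!} \;\ge\; \frac{(\eta d)^{m+1}}{(m+1)!}\,.
\end{displaymath}
For the denominator, the other half of the definition of $\V(\eta,d)$ gives $\diam(T) \le d$, hence $\diam(\{x_0,\ldots,x_{m+1}\})^{m+2} \le d^{m+2}$.

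Dividing the two bounds gives
\begin{displaymath}
\DC(x_0,\ldots,x_{m+1}) \;\ge\; \frac{(\eta d)^{m+1}}{(m+1)!\, d^{m+2}} \;=\; \frac{\eta^{m+1}}{(m+1)!\, d}\,,
\end{displaymath}
which is exactly \eqref{est:curv}. The only thing to be careful about is that $\diam(T)$ really equals the maximal pairwise distance among the vertices $x_0,\ldots,x_{m+1}$ (since the diameter of a convex polytope is attained at a pair of vertices), so replacing $\diam(T)$ by $\diam(\{x_0,\ldots,x_{m+1}\})$ in the denominator of $\DC$ is consistent and the estimate goes through verbatim.
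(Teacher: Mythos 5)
Your proof is correct and is exactly the argument the paper intends: lower-bound $\H^{m+1}(T)$ via \eqref{volume_simplex_estimate} using $\hmin(T)\ge\eta d$, upper-bound the diameter in the denominator of $\DC$ by $d$, and divide. The paper gives no further detail than the citation to \eqref{volume_simplex_estimate}, so there is no alternative route to compare.
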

\noindent
For $\eta=\eta(m)$ and $d=d_s(x_0)$ this yields
\[
\GMC(x_0)\ge K(x_0,\ldots,x_{m+1})\ge \frac{a(m)}{d_s(x_0)}
\]
for some constant $a(m)$ depending only on $m$. By
Proposition~\ref{prop:perturbed}, we know that for simplices $\bar{T}$ that
arise from $T$ by shifting $x_0$ by at most $\frac 18 \eta^2 d$ a similar
estimate holds, possibly with a slightly smaller $a(m)$ -- still, depending only
on $m$. Thus,
\begin{equation}
  \label{GMC-lowbound}
  \GMC(z) \ge \frac{a(m)}{d_s(x_0)}\,,
  \quad\Foa
  z\in \Sigma \cap \Ball^n(x_0,\eta^2d/8)\,.
\end{equation}
Using the assumption of Theorem~\ref{thm:UAR} we now estimate
\begin{align*} 
  E &\ge \int_{\Sigma \cap \Ball^n(x_0,\eta^2d/8)} \GMC(z)^p\, d\H^m(z) \\
  &\ge \H^m(\Sigma \cap \Ball^n(x_0,\eta^2d/8)) \left( \frac {a(m)}{d_s(x_0)} \right)^p
  \qquad\text{by \eqref{GMC-lowbound}} \\
  &\ge \frac 1{2\cdot 8^m}\omega_m \eta^{2m}d_s(x_0)^{m-p}a(m)^p
  \qquad \text{by Corollary~\ref{cor:uahlreg}.}
\end{align*}
Note that $\eta \in (0,1)$, so Corollary~\ref{cor:uahlreg} is indeed
applicable. Equivalently,
\[
d_s(x_0)^{p-m}\ge c / E
\]
for some $c$ depending only on $m$ and $p$. Upon taking the infimum
w.r.t. $x_0\in \Sigma$ (note that we use $p>m$ here!), we obtain
\[
d(\Sigma) := \inf_{x_0\in\Sigma} d_s(x_0) \ge \left(\frac c{E}\right)^{1/(p-m)} =:R_0
\]
An application of Corollary~\ref{cor:uahlreg} implies now Theorem~\ref{thm:UAR}
in the case of $\GMC$.

\subsubsection{The case of global tangent--point curvature}

As we have already mentioned in the introduction, the $L^p$ norm of the global
tangent-point curvature $\GTP[\Sigma]$ can be finite for at most one choice of a
continuous map $H\colon \Sigma\ni x\mapsto H(x)\in G(n,m)$. Thus, from now on we
suppose
\[
H\colon \Sigma\ni x\longmapsto T_x\Sigma\in G(n,m)\, ,
\]
since at this point we know already that $\Sigma$ is a $C^1$ submanifold of
$\R^n$ (without boundary). The general scheme of proof is similar to the case of
global Menger curvature. Some of the technical details are different and we
present them below.

\subsubsection*{High energy couples of points and large projections}

\label{sec:4.1}

The notion of a \emph{high energy couple} expresses in a quantitative way the
following rough idea: if there are two points $x,y\in \Sigma$ such that the
distance from $y$ to a substantial portion of the affine planes $z+T_z\Sigma$
(where $z$ is very close to $x$) is comparable to $|x-y|$, then a certain fixed
portion of the `energy', i.e. of the norm $\|\GTP\|_{L^p}$, comes \emph{only
  from a fixed neighbourhood of $x$}, of size comparable to $|x-y|$.

Recall that $Q_{T_z\Sigma}$ stands for the orthogonal projection onto
$(T_z\Sigma)^\perp$.

\begin{definition}[High energy couples]
  \label{couples}
  We say that $(x,y)\in \Sigma \times \Sigma$ is a {\rm
    $(\lambda,\alpha,d)$--high energy couple} if and only if the following two
  conditions are satisfied:
  \begin{enumerate}
  \item[{\rm (i)}] $d/2 \le |x-y|\le 2d$;
  \item[{\rm (ii)}] The set
    \[
    S(x,y;\alpha,d) := \left\{ z\in \Ball^n(x,\alpha^2 d) \cap \Sigma\colon
    |Q_{T_z\Sigma}(y-z)|\ge \alpha d \right\}
    \]
    satisfies
    \[
    \H^m(S(x,y;\alpha,d)) \ge \lambda \H^m(\Ball^m(0,\alpha^2 d)) =
    \lambda \omega_m \alpha^{2m}d^m\, .
    \]
  \end{enumerate}
\end{definition}
We shall be using this definition for fixed $0<\alpha,\lambda\ll 1$ depending
only on $n$ and $m$. Intuitively, high energy couples force the $L^p$-norm of
$\GTP$ to be large.

\begin{lemma}
  \label{1/R-est}
  If $(x,y)\in \Sigma \times \Sigma$ is a $(\lambda,\alpha,d)$--high energy
  couple with $\alpha < \frac 12$ and an arbitrary $\lambda\in (0,1]$, then
  \begin{equation}
    \GTP(z) > \frac \alpha {9d}
  \end{equation}
  for all $z\in S(x,y;\alpha,d)$.
\end{lemma}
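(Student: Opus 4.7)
The plan is to extract the lower bound on $\GTP(z)$ directly from the definition, by testing the supremum $\GTP(z) = \sup_{y'\in\Sigma} 1/\rtp(z,y';T_z\Sigma)$ with the specific point $y'=y$ supplied by the high energy couple. Indeed, formula~\eqref{rtp} gives
\[
\GTP(z) \;\ge\; \frac{1}{\rtp(z,y;T_z\Sigma)} \;=\; \frac{2\,\dist(y,z+T_z\Sigma)}{|y-z|^2} \;=\; \frac{2\,|Q_{T_z\Sigma}(y-z)|}{|y-z|^2},
\]
so the whole proof reduces to bounding the numerator from below and the denominator from above.

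For the numerator, I use condition (ii) of Definition~\ref{couples}: since $z\in S(x,y;\alpha,d)$, one has $|Q_{T_z\Sigma}(y-z)|\ge \alpha d$. For the denominator, I combine condition (i) of Definition~\ref{couples} with $z\in\Ball^n(x,\alpha^2 d)$ and the triangle inequality to obtain
\[
|y-z| \;\le\; |y-x|+|x-z| \;\le\; 2d+\alpha^2 d,
\]
and the hypothesis $\alpha<1/2$ gives $\alpha^2<1/4$, hence $|y-z|<9d/4$. Substituting both estimates yields
\[
\GTP(z) \;>\; \frac{2\alpha d}{(9d/4)^2} \;=\; \frac{32\alpha}{81 d} \;>\; \frac{\alpha}{9d},
\]
where the last inequality holds because $32\cdot 9 > 81$.

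There is no real obstacle; the lemma is a pure definition‑unfolding, once one notices that the single test point $y$ already suffices for the supremum defining $\GTP(z)$. The parameter $\lambda$ plays no role in this pointwise estimate — it will only be used afterwards, when the bound is integrated over $S(x,y;\alpha,d)$ in Ahlfors‑type energy estimates. The mild restriction $\alpha<1/2$ is introduced solely to keep $|y-z|^2$ below the convenient threshold $(9d/4)^2$; any $\alpha$ bounded away from $1$ would give a qualitatively identical conclusion with a slightly worse numerical constant.
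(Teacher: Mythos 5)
Your proof is correct and follows essentially the same approach as the paper: one lower-bounds $\GTP(z)$ by testing the defining supremum at a point near $y$ and then controls the numerator via condition~(ii) of Definition~\ref{couples} and the denominator via the triangle inequality. The only difference is a mild simplification --- the paper tests with arbitrary $w\in\Sigma\cap\Ball^n(y,\alpha^2 d)$ (losing a factor of two in the numerator bound), whereas you test directly with $y$ itself, which is admissible since $y\in\Sigma$ and yields the slightly sharper constant $32\alpha/(81d)$.
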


\begin{proof}
  For $z\in S(x,y;\alpha,d)$ and $w\in \Ball^n(y,\alpha^2 d)$  we have
  \begin{eqnarray*}
    \dist(w,z+T_z\Sigma)=|Q_{T_z\Sigma}(w-z)| & = & |Q_{T_z\Sigma}(y-z) + Q_{T_z\Sigma}(w-y)| \\
    & \ge & \alpha d - |w-y| \qquad \mbox{by Definition~\ref{couples}~(ii)}
    \\
    &  > & \frac{\alpha d}{2} \qquad\text{as } \alpha < \tfrac 12 \,.
  \end{eqnarray*}
  Moreover, $|w-z|\le |x-y|+|x-z|+|w-y|< 2d + 2\alpha^2d < 3d$.  Thus, by the
  above computation,
  \begin{align*}
    \GTP(z) &= \sup_{w\in \Sigma}\frac{2\dist(w,z+T_z\Sigma)}{|w-z|^2} \\
    &\ge   \sup_{w\in \Sigma\cap \Ball^n(y,\alpha^2 r)}\frac{2\dist(w,z+T_z\Sigma)}{|w-z|^2} 
    > \frac{\alpha
      d}{(3d)^2} =  \frac \alpha {9d}\, .
  \end{align*}
  This completes the proof of the lemma.
\end{proof}

\reallyinvisible{}{ To describe the mechanism of finding the high energy couples,
  we introduce some notation first. For a plane $H\in G(n,m)$ and $\delta\in
  (0,1)$ we set
  \begin{eqnarray}
    C(\delta,H) &:=& \{ z\in \bbbr^n \colon |Q_H(z)|\ge \delta |z|\}\, , \label{CH}\\
    C_r(\delta,H) &:=& C(\delta,H) \cap \overline{\Ball^n(0,r)}\, .\label{CrH}
  \end{eqnarray}
  (These are closed `double cones' with `axis' equal to $H^\perp$.  Note that if
  $n>m+1$, then the interior of $C(\delta,H)$ and of $C_r(\delta,H)$ is
  connected.) We shall also use the intersections of cones with annuli, {\tt\xx
    NECESSARY??????, SEE OUR REVISED \cite{svdm-tp1} }
  \begin{equation}
    A_{R,r}(x,\delta,W):=x+\mathrm{int}\, \Bigl(C_R(\delta,W)\setminus
    \Ball^n(0,r)\Bigr)\, . \label{ann-cone}
  \end{equation} 
}
  
The key to Theorem~\ref{thm:UAR} in the case of $\GTP$ global curvature
is to observe that high energy couples and large projections coexist on the same
scale.

\begin{proposition}[Stopping distances and large projections]
  \label{mainlemma-tp}
  There exist constants $\eta=\eta(m), \delta=\delta(m), \lambda=\lambda(n,m)
  \in (0,\frac 19) $ which depend only on $n,m$, and have the following
  property.

  Assume that $\Sigma$ is an arbitrary embedded $C^1$ compact manifold without
  boundary. For every $x\in \Sigma$ there exist a number $d\equiv d_s(x)>0$ and
  a point $y\in \Sigma$ such that
  \begin{enumerate}
    \renewcommand{\labelenumi}{{\rm (\roman{enumi})}}
    
  \item $(x,y)$ is a $(\lambda,\eta,d)$--high energy couple;
    
  \item for each $r\in (0,d]$ there exists a plane $H(r)\in G(n,m)$
    such that
    \[
    \pi_{H(r)}(\Sigma\cap \Ball^n(x,r)) \ \supset\ H(r)\cap
    \Ball^n\bigl(\pi_{H(r)}(x),r\sqrt{1-\delta^2}\bigr)\, ,
    \]
    and therefore $$ \H^m(\Sigma\cap \Ball^n(x,r)) \ge
    (1-\delta^2)^{m/2}\omega_m r^m\ge \frac 12 \omega_m r^m$$ for all $0<r \le d_s(x)$.  
    \reallyinvisible{}{
      
    \item the plane $W=H(d)\in G(n,m)$ is such that
      $\Sigma\cap A_{d,d/2}(x,\delta,W)=\emptyset$.

    \item Each disk $D^{n-m}(z,r;W^\perp)=  z +\{v\in W^\perp\colon
      |v|\le r\}$ with $z\in x+W$, $|z-x|\le d\sqrt{1-\delta^2}$, and
      radius $r$ such that
      \begin{equation}
        \S^{n-m-1}(z,r;W^\perp):= z +\{v\in W^\perp\colon |v|= r\} \
        \subset \ A_{d,d/2}(x,\delta, W) \label{s-in-cone}
      \end{equation}
      contains at least one point of $\Sigma$.
    }
  \end{enumerate}
\end{proposition}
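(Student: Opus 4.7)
The plan is to define the stopping distance $d = d_s(x)$ as the supremum of radii $r > 0$ for which the projection inclusion in (ii) can be realised simultaneously for every $r' \in (0, r]$, with a single fixed constant $\delta \in (0, 1/9)$ depending only on $m$ (to be calibrated at the end). Since $\Sigma$ is a $C^1$ embedded submanifold, for $r$ small enough $\Sigma \cap \Ball^n(x,r)$ is a very flat graph over $T_x\Sigma$; the choice $H(r) := T_x\Sigma$ then works with room to spare, so $d_s(x) > 0$. The Ahlfors lower bound asserted in (ii) is immediate from the projection inclusion, since $\pi_{H(r)}$ is $1$-Lipschitz and therefore does not increase $\H^m$.

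To produce the point $y$ and verify (i), I would analyse what breaks down at scales slightly above $d$. By the supremum property, for every $r > d$ no plane satisfies the projection inclusion at that scale. Using compactness of $G(n,m)$ and passing to a limit plane $H(d)$, one extracts a point $q \in H(d) \cap \Ball^n(\pi_{H(d)}(x), d\sqrt{1-\delta^2})$ that is only marginally covered: raising the radius an $\varepsilon$-amount above $d$ removes the coverage of $q$. Following the homotopy/linking strategy used for the analogous Proposition~\ref{prop:big-proj-fat-simp} in \cite{slawek-phd} and for tangent--point energies in \cite[Section 3]{svdm-tp1}, a small $(n-m-1)$-sphere in the normal slice above $q$ at height slightly above $d$ must link $\Sigma$ nontrivially; a standard degree argument (cf.\ the reasoning in Remark~\ref{rem:Msigma}) then produces a point $y \in \Sigma$ with $d/2 \le |x-y| \le 2d$ that sits essentially transverse to the affine plane $x + H(d)$.

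With $y$ fixed, the delicate step is the mass bound in Definition~\ref{couples}(ii). Condition (ii) at the smaller scale $\eta^2 d$ already supplies
\[
\H^m\bigl(\Sigma \cap \Ball^n(x, \eta^2 d)\bigr) \ge \tfrac{1}{2} \omega_m \eta^{2m} d^m.
\]
I would show that at most a fixed fraction, say $1 - 2\lambda$, of this mass can lie in the complement of $S(x, y; \eta, d)$. Any $z$ in the complement satisfies $|Q_{T_z\Sigma}(y - z)| < \eta d$, so $y$ lies within height $\eta d$ of $z + T_z\Sigma$. A perturbation argument as in Proposition~\ref{prop:perturbed} converts the displacement $|z-x| \le \eta^2 d$ into a quantitative bound on $\dgras(T_z\Sigma, T_x\Sigma)$, which in turn pins $y$ to within $O(\eta d)$ of $x + T_x\Sigma$. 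If that happened for too large a subset of $\Sigma \cap \Ball^n(x, \eta^2 d)$, one could tilt $H(d)$ slightly to a new plane $\tilde H$ still making the projection inclusion hold on the scale $(1+c)d$ for some $c = c(n,m) > 0$, contradicting the maximality of $d_s(x)$.

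The main obstacle I expect is the calibration of the constants $\eta, \delta, \lambda \in (0, 1/9)$ so that all three depend only on $n$ and $m$. This requires threading the perturbation constants from Propositions~\ref{prop:gs-red}, \ref{prop:dist-ang}, and \ref{prop:perturbed} through the linking step and the mass estimate at once: $\delta$ must be small enough that the angular deficit at the stopping scale cannot be absorbed by rotating $H(d)$, and then $\eta$ must be chosen small enough that the perturbation estimates apply cleanly on the ball $\Ball^n(x, \eta^2 d)$, while $\lambda$ must be taken below the explicit lower bound coming from the complement argument. Once these constants are fixed once and for all in terms of $n$ and $m$, the proposition follows.
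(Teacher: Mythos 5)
The paper does not prove this proposition; it defers explicitly to \cite[Section~4]{svdm-tp1}, where the construction is an \emph{iterative} (algorithmic) build-up of forbidden conical regions with apertures determined by $\delta$. The stopping distance $d_s(x)$ is defined by the termination of that algorithm, and the high-energy point $y$ emerges from the configuration at termination time. Your ``supremum of admissible radii'' definition is a plausible shortcut, but it is not what \cite{svdm-tp1} does, and as sketched it does not close the loop.

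The decisive gap is in your verification of Definition~\ref{couples}(ii). You claim that ``a perturbation argument as in Proposition~\ref{prop:perturbed} converts the displacement $|z-x|\le\eta^2 d$ into a quantitative bound on $\dgras(T_z\Sigma,T_x\Sigma)$.'' This is not available. Proposition~\ref{prop:perturbed} is a statement about perturbing one vertex of a voluminous simplex; it says nothing about tangent planes of a $C^1$ manifold. More fundamentally, for an \emph{arbitrary} $C^1$ embedded manifold the map $z\mapsto T_z\Sigma$ is merely continuous, with no modulus that is uniform over the class. The constants $\eta,\delta,\lambda$ in the proposition are required to depend \emph{only} on $n$ and $m$, so any argument invoking a quantitative decay of $\dgras(T_z\Sigma,T_x\Sigma)$ with $|z-x|$ is structurally unavailable at this stage (indeed, establishing such a H\"older modulus is precisely what the later parts of Section~3 accomplish, using finiteness of the energy, which here cannot yet be assumed). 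The complement argument --- bounding the measure of $\{z:|Q_{T_z\Sigma}(y-z)|<\eta d\}$ by tilting $H(d)$ --- therefore does not go through as written, and the contradiction with maximality of $d_s(x)$ is not established. A correct proof must instead extract the transversality needed in (ii) from the linking/large-projection structure itself (as in the forbidden-cone construction of \cite{svdm-tp1}), not from a purported continuity modulus of $T_z\Sigma$.

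A secondary issue: your extraction of the ``marginally covered'' point $q$ and of $y$ via linking is stated only at the level of intentions. In particular, passing to a limit plane $H(d)$ along $r\nearrow d$ requires controlling possible oscillation of the optimizing planes $H(r)$, and the linking argument needs the precise cone geometry (as in \cite[Lemma~3.5]{svdm-tp1}) to produce a point with $d/2\le|x-y|\le 2d$ rather than merely $|x-y|\gtrsim d$; both steps are left unargued.
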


For the proof of this lemma (for a much wider class of $m$-dimensional sets than
just $C^1$ embedded compact manifolds) we refer the reader to \cite[Section
4]{svdm-tp1}.

\begin{lemma}\label{low-dE-bounds} If $\Sigma\subset \R^n$ is an embedded $C^1$
  compact manifold without boundary, $p>m$ and
  \[
  E\ge \int_\Sigma \GTP(x)^p \, d\H^m(x)\, ,
  \]
  then the stopping distances $d_s(x)$
  of Proposition \ref{mainlemma-tp}
  satisfy
  \begin{equation}
    \label{low-E-bound}
    d(\Sigma) =\inf_{x\in\Sigma} d_s(x) \ge \left(\frac c{E}\right)^{1/(p-m)} =:R_0
  \end{equation}
  where $c$ depends only on $n$, $m$ and $p$.
\end{lemma}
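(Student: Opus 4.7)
The plan is to mimic the argument used above for $\GMC$, replacing the voluminous-simplex input by the high-energy-couple input supplied by Proposition~\ref{mainlemma-tp} and quantified in Lemma~\ref{1/R-est}. Fix an arbitrary $x \in \Sigma$ and let $d = d_s(x)$, $y \in \Sigma$, together with the high-energy-couple parameter $\eta = \eta(m)$ and the mass parameter $\lambda = \lambda(n,m)$, be as provided by Proposition~\ref{mainlemma-tp}(i). Since $\eta < 1/9 < 1/2$, Lemma~\ref{1/R-est} applies and gives the pointwise lower bound
\begin{equation}
\label{eq:plan-Ktp-low}
\GTP(z) \;>\; \frac{\eta}{9\,d_s(x)} \qquad\text{for every } z \in S(x,y;\eta,d).
\end{equation}
By Definition~\ref{couples}(ii) the set $S(x,y;\eta,d)$ has measure at least $\lambda\,\omega_m\,\eta^{2m}\,d^{m}$, and it is contained in $\Sigma \cap \Ball^n(x,\eta^2 d) \subset \Sigma \cap \Ball^n(x, d_s(x))$.

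The plan next is to feed \eqref{eq:plan-Ktp-low} into the energy bound. Since $S(x,y;\eta,d) \subset \Sigma$, we have
\begin{equation*}
E \;\geq\; \int_{S(x,y;\eta,d)} \GTP(z)^{p}\,d\H^m(z) \;\geq\; \Bigl(\frac{\eta}{9d}\Bigr)^{p}\,\H^m\bigl(S(x,y;\eta,d)\bigr) \;\geq\; \Bigl(\frac{\eta}{9d}\Bigr)^{p}\,\lambda\,\omega_m\,\eta^{2m}\,d^{m}.
\end{equation*}
Collecting the constants into $c_1 = c_1(n,m,p) := 9^{-p}\,\lambda\,\omega_m\,\eta^{\,p+2m}$, this reads $E \geq c_1\, d_s(x)^{\,m-p}$, i.e.
\begin{equation*}
d_s(x)^{\,p-m} \;\geq\; \frac{c_1}{E}.
\end{equation*}
Because $p > m$, we may take $(p-m)$-th roots and conclude $d_s(x) \geq (c_1/E)^{1/(p-m)}$, with $c_1$ independent of $x$. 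Taking the infimum over $x \in \Sigma$ yields $d(\Sigma) \geq (c/E)^{1/(p-m)}$ with $c = c_1(n,m,p)$, as claimed.

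No step here is delicate: the content of the lemma is really bookkeeping of previously established geometric facts, with the role of voluminous simplices for $\GMC$ taken over by high-energy couples for $\GTP$. The only thing to be careful about is to apply Lemma~\ref{1/R-est} \emph{and} the measure lower bound from Definition~\ref{couples}(ii) on the same set $S(x,y;\eta,d)$, so that the product of the pointwise lower bound and the measure lower bound produces the correct power of $d$ on the right-hand side; this is what forces the exponent $1/(p-m)$ and makes essential use of the assumption $p > m$.
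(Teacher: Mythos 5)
Your proof is correct and follows essentially the same route as the paper: select the high-energy couple from Proposition~\ref{mainlemma-tp}(i), apply the pointwise lower bound of Lemma~\ref{1/R-est} on the set $S(x,y;\eta,d_s(x))$, combine with the measure lower bound in Definition~\ref{couples}(ii), and integrate. The resulting constant $c = 9^{-p}\lambda\,\omega_m\,\eta^{p+2m}$ matches the paper's computation, and the final step (taking the infimum over $x$, using $p>m$) is identical.
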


\begin{proof}
  Let $\lambda$ and $\eta$ be the constants of 
  Proposition~\ref{mainlemma-tp}.
  Use this proposition  to select a $(\lambda,\eta,d)$--high energy couple $(x,y)\in \Sigma\times\Sigma$. Let
  \[
  S:=S(x,y;\eta,d_s(x))
  \]
  be as in Definition~\ref{couples}~(ii). Applying~Lemma~\ref{1/R-est} we estimate
  \begin{eqnarray*} 
    E & \ge & \int_{S}      \GTP(z)^p\, d\H^m(z) \\
    &  > & \H^m(S)  \left(\frac \eta{9d_s(x)}\right)^p \qquad\mbox{by Lemma~\ref{1/R-est}} \\
    &\ge & \lambda\omega_m \eta^{2m+p}d_s(x)^{m-p}9^{-p}  
    \qquad\mbox{by Definition~\ref{couples}~(ii).}
  \end{eqnarray*}
  This implies
  \begin{displaymath}
    d_s(x)^{p-m}
    >   c/E
  \end{displaymath}
  for a constant $c$ depending only on $n$, $m$, $p$. As in the case of $\GMC$,
  upon taking the infimum of the left hand side w.r.t. $x\in \Sigma$, we
  conclude the proof of the lemma.
\end{proof}

Theorem~\ref{thm:UAR} in the case of $\GTP$ follows now immediately. By the lower bound  \eqref{low-E-bound} for stopping distances and Proposition~\ref{mainlemma-tp}~(ii), the inequality
\[
\H^m(\Sigma\cap \Ball(x,r)) \ge 
(1-\delta^2)^{m/2} \omega_mr^m \ge \frac 12 \omega_mr^m
\]
holds for each $x\in \Sigma$ and each $r\le R_0$, since $R_0\le d(\Sigma)\le d_s(x)$.

\subsubsection{An application: uniform size of $C^{1,\kappa}$-graph patches}

Now, returning to the proofs of Propositions~\ref{prop:beta-est} and~\ref{prop:beta-est-tp}, we see that for all radii
\[
r\le C(n,m,p) E^{-1/(p-m)}=R_0
\]
the estimate $\H^m(\Ball(x,r))\ge A_\Sigma\omega_mr^m$ can be replaced by \eqref{ineq:UAR}, i.e. used with $A_\Sigma=1/2$. Thus, for such radii the decay estimates in  Propositions~\ref{prop:beta-est} and~\ref{prop:beta-est-tp}, and the resulting $C^{1,\kappa}$-estimates do not depend on $A_\Sigma$ or $\diam \Sigma$ at all. An inspection of the argument leading to Corollary~\ref{C1kappa-first} gives the following sharpened version, with all estimates depending in a uniform way only on the energy.

\begin{corollary}[\textbf{$C^{1,\kappa}$ estimates, second version}]\label{C1kappa-second}
  Assume that $\Sigma \subset \R^n$ is an $m$-fine set
  and let $\mathcal{K}^{(1)}(\cdot):=
  \GMC[\Sigma](\cdot)$ and $\mathcal{K}^{(2)}(\cdot):=\GTP[\Sigma]
  (\cdot)$. If
  \[
  \int_\Sigma \GCi(z)^p\, d\H^m(z) \le E <\infty
  \]
  holds  for $i=1$ or $i=2$. Then $\Sigma$ is an
  embedded closed manifold of class $C^{1,\kappa_i}$, where
  \[
  \kappa_1=\frac{p-m}{p(m+1)+2m}\, , 
  \qquad \kappa_2=\frac{p-m}{p+m}\, .
  \]

  Moreover we can find a radius $R_1 =
  a(n, m,p)E^{-1/(p-m)}\le R_0$ and a constant $K_1 =
  K(n,m,p)$ such that
  for each $x \in \Sigma$
  there is a function
  $$
  f_x:T_x\Sigma=:P
  \cong\R^m\to P^\perp\cong\R^{n-m}
  $$
  of class $C^{1,\kappa_i}$, such that $f_x(0)=0$ and
  $D f_x(0)=0$, and
  $$
  \Sigma\cap\Ball^n(x,R_1)=x+\Big(\graph f_x\cap\Ball^n(0,R_1)\Big),
  $$
  where $\graph f_x\subset P\times P^\perp=\R^n$ denotes the
  graph of $f_x$, and
  $$
  \|D f_x\|_{C^{0,\kappa_i}(\overline{\Ball^n}(0,R_1),
    \R^{(n-m)\times n})}\le K_1 E^{\kappa_i/(p-m)}\,.
  $$
\end{corollary}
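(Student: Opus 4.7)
The proof will proceed by revisiting each step of the proof of Corollary \ref{C1kappa-first} and carefully replacing those constants that depended on $A_\Sigma$, $M_\Sigma$, and $\diam\Sigma$ by absolute constants (or pure functions of $n,m,p$), using Theorem \ref{thm:UAR} and Remark \ref{rem:Msigma} as the two decisive inputs. The key observation is that everything in the previous proof was derived from three ingredients: (i) the beta-number decay from Propositions \ref{prop:beta-est} or \ref{prop:beta-est-tp}, (ii) the hole-control inequality $\theta_\Sigma(\cdot, r)\le M_\Sigma \beta_\Sigma(\cdot, r)$, and (iii) the angular iteration based on Proposition \ref{prop:red-ang}. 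Each of these now admits a uniform version.

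First, I would fix $R_0 = C(n,m,p)E^{-1/(p-m)}$ as in Theorem \ref{thm:UAR}, so that on all scales $r\le R_0$ the measure bound $\H^m(\Sigma\cap\Ball(x,r))\ge \tfrac12\omega_m r^m$ holds independently of $\Sigma$. Re-running the arguments of Lemma \ref{lem:eta-d-balance} and Proposition \ref{prop:beta-est-tp} with $A_\Sigma$ replaced by $1/2$ yields, for $r\le R_0$ and $x\in\Sigma$,
\begin{equation*}
\beta_\Sigma(x,r)\le C(n,m,p)\, E^{\kappa_i/(p-m)}\, r^{\kappa_i}\,,
\end{equation*}
with the constant $C_1:=C(n,m,p)E^{\kappa_i/(p-m)}$ replacing the old one. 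By Remark \ref{rem:Msigma}, there is a further scale $R_3=R_3(n,m,p,E)$ below which $\theta_\Sigma(x,r)\le C\beta_\Sigma(x,r)$ with an absolute constant $C$, so that \eqref{THETA} from the proof of Corollary \ref{C1kappa-first} holds with $M_\Sigma$ replaced by an absolute constant and with $C_1$ as above. Note that $R_3$ in turn depends on $E$ only through the decay rate of $\beta_\Sigma$ and is therefore of the form $a(n,m,p)E^{-1/(p-m)}$.

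Next I would run the angular iteration verbatim as in Corollary \ref{C1kappa-first}. The smallness condition \eqref{new_small} becomes $C_1 r^{\kappa_i}<\tfrac{1}{\sqrt 2}-\tfrac14$, and this determines a scale
\begin{equation*}
R_1 := a(n,m,p)\,E^{-1/(p-m)}\,,
\end{equation*}
below which Proposition \ref{prop:red-ang} applies uniformly, Proposition \ref{prop:close-bases} propagates through the dyadic chain $r,r/2,r/4,\ldots$, and the limit plane equals $T_x\Sigma$. The resulting tangent-plane oscillation estimate \eqref{est:tanosc} reads
\begin{equation*}
\ang(T_x\Sigma,T_y\Sigma)\le C(n,m,p)\,E^{\kappa_i/(p-m)}\,|x-y|^{\kappa_i}
\end{equation*}
for all $x,y\in\Sigma$ with $|x-y|\le R_1/2$. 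Because the constants come only from Propositions \ref{prop:red-ang}--\ref{prop:close-bases} applied finitely many times, the explicit power $E^{\kappa_i/(p-m)}$ is preserved.

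Finally, to obtain the graph representation I would repeat the injectivity argument for $\pi_x:\Sigma\cap\Ball(x,R_2)\to x+T_x\Sigma$ from the proof of Corollary \ref{C1kappa-first}, noting that the bound on $R_2$ there is expressed purely in terms of the constant in \eqref{est:tanosc}; hence after shrinking $R_1$ by a uniform factor we may take $R_2=R_1=a(n,m,p)E^{-1/(p-m)}$. The standard graph-$C^{1,\kappa_i}$ conversion of \cite[Section 5]{svdm-tp1} then yields a function $f_x:T_x\Sigma\to (T_x\Sigma)^\perp$ with $f_x(0)=0$, $Df_x(0)=0$, and
\begin{equation*}
\|Df_x\|_{C^{0,\kappa_i}(\overline{\Ball^n(0,R_1)},\R^{(n-m)\times n})}\le K_1 E^{\kappa_i/(p-m)}\,,
\end{equation*}
where $K_1=K(n,m,p)$. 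The main obstacle is of a bookkeeping nature: tracking the precise power of $E$ through the dyadic iteration of the angle estimate and verifying that no hidden dependence on $A_\Sigma$, $M_\Sigma$, or $\diam\Sigma$ sneaks in via the constant $R_3$ from Remark \ref{rem:Msigma}; once one checks that the linking argument there also uses only the uniform decay of $\beta_\Sigma$, the conclusion follows.
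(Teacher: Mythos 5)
Your proposal is correct and mirrors the paper's own argument: invoke Theorem~\ref{thm:UAR} to replace $A_\Sigma$ by $1/2$ at scales $r\le R_0$, invoke Remark~\ref{rem:Msigma} to replace $M_\Sigma$ by an absolute constant, and then re-run the beta-decay and angular-iteration steps of Corollary~\ref{C1kappa-first} to see that all constants depend only on $n,m,p$ and $E$. The paper states this upgrade briefly (``an inspection of the argument leading to Corollary~\ref{C1kappa-first} gives the following sharpened version'') and you have simply spelled out the same inspection.
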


As for Corollary \ref{C1kappa-first} also here we do not enter into the details
of construction of the graph parametrizations $f_x$. These are described in
\cite[Section~5.4]{svdm-tp1} and in \cite[Section~3]{slawek-phd}.

\begin{remark}\label{rem:flat} Note that shrinking $a(n,m,p)$ 
  if necessary, we can always assume that
  \begin{multline*}
    |Df_x(z_1)-Df_x(z_2)|  \le  K_1 E^{\kappa_i/(p-m)}
    \cdot R_1^{\kappa_i}\\= K_1 a(n,m,p)^{\kappa_i} E^{\kappa_i/(p-m)} E^{-\kappa_i/(p-m)} 
    = K_1(m,p)\cdot a(n,m,p)^{\kappa_i} <\eps_0
  \end{multline*}
  for an arbitrary small $\eps_0=\eps_0(m)>0$ that has been a priori fixed.
\end{remark}

\subsection{Bootstrap: optimal H\"older regularity for graphs}
\label{bootstrap}

In this subsection we assume that $\Sigma$ is a flat $m$-dimensional graph of
class $C^{1,\kappa_i}$, satisfying
\[
\int_\Sigma \GCi(z)^p\, d\H^m(z)<\infty\, 
\]
for $i=1$ or $i=2$, recall our notation from before: $\mathcal{K}^{(1)}:=\GMC$
and $\mathcal{K}^{(2)}:=\GTP.$ The goal is to show how to bootstrap the
H\"{o}lder exponent $\kappa_i$ to $\tau = 1- m/p$.

Relying on Corollary~\ref{C1kappa-second} and Remark~\ref{rem:flat}, without
loss of generality we can assume that
\[
\Sigma \cap \Ball^n(0,20 R) = \mathrm{Graph}\, 
f \cap \Ball^n(0,20 R)
\]
for a fixed number $R>0$, where
\[
f\colon P\cong \R^m \to P^\perp \cong \R^{n-m}
\]
is of class $C^{1,\kappa_i}$ and satisfies $D f(0)=0$,  $f(0)=0$,
\begin{equation}
  \label{flatgraph} |D f|< \eps_0(m) \qquad\mbox{on $P$ }
\end{equation}
for some number $\eps_0$ to be specified later on. The ultimate goal is to show
that $\osc_{\Ball^m(b,s)}Df\le Cs^\tau$ with a constant $C$ depending only on
the~local energy of $\Sigma$; cf.~\eqref{FINAL}. The smallness condition
\eqref{flatgraph} allows us to use all estimates of Section~\ref{sec:2} for all
tangent planes $T_z\Sigma$ with $z\in \Sigma\cap \Ball^n(0,20 R)$.

\reallyinvisible{}{ 
  The essence is to be able to apply all estimates of Section \ref{sec:2}\xx for
  all tangent planes $T_z\Sigma$ with $z\in \Sigma\cap \Ball^n(0,20\xx R)$: we
  want these planes to be sufficiently close in $G(n,m)$.  {\tt\xx does the
    preceding sentence really help???\xx}
}
Let $F\colon P\to \R^n$ be the natural parametr\-ization of $\Sigma\cap
\Ball^n(0,20 R)$, given by $F(\xi) = (\xi,f(\xi))$ for $\xi\in P$; outside
$\Ball^n(0,20 R)$ the image of $F$ does not have to coincide with $\Sigma$.
The choice of $\eps_0$  guarantees
\begin{equation}
  \ang (T_{F(\xi_1)}\Sigma, T_{F(\xi_2)}\Sigma ) <  \eps_1(m) \qquad \mbox{for all $x_1,x_2\in
    \Ball^n(0,5R)\cap P$,} \label{tan-angles}
\end{equation}
where $\eps_1(m)$ is the constant from Lemma~\ref{proj-slab}.

As in our papers \cite[Section~6]{svdm-tp1}, \cite{svdm-surfaces} and
\cite{slawek-phd}, developing the idea which has been used in
\cite{ssvdm-triple} for curves, we introduce the maximal functions controlling
the oscillation of $D f$ at various places and scales,
\begin{equation}
  \label{Phi-ast} \Phi^\ast(\varrho, A) = \sup_{{B_\varrho\subset A}}
  \left(\osc_{B_\varrho} D f\right)
\end{equation}
where the supremum is taken over all  possible closed $m$-dimensional balls
$B_\varrho$ of radius $\varrho$ that are contained in a subset  $A
\subset \Ball^n(0,5R) \cap P$, with $\varrho\le 5R$. Since $f\in
C^{1,\kappa}$ with $\kappa=\kappa_1$ or $\kappa=\kappa_2$ we have a priori
\begin{equation}
  \label{apriori} \Phi^\ast(\varrho, A)\le C\varrho^{\kappa_i}, \qquad i=1 \mbox{ or } i=2, 
\end{equation}
for some constant $C$ which does not depend on $\varrho,A$.

To show that $f\in C^{1,\tau}$ for $\tau=1-m/p$, we check that locally, on each
scale $\rho$, the oscillation of $D f$ is controlled by a main term which
involves the local integral of $\GCi(z)^p$ and has the desired form
$C\rho^\tau$, up to a small error, which itself is controlled by the oscillation
of $D f$ on a much smaller scale $\rho/N$. The number $N$ can be chosen so large
that upon iteration this error term vanishes.

\begin{lemma}\label{key}
  Let $f$, $F$, $\Sigma$, $R>0$ and $P$ be as above. If $z_1,z_2\in
  \Ball^n(0,2R)\cap P$ with $|z_1-z_2|=t>0$, then for each sufficiently large
  $N>4$ we have
  \begin{equation}
    \label{osc-error}
    |D f(z_1)-D f(z_2)| \le  A(m)\Phi^\ast (2t/N,B) + C(N, m,p) \,  E_B^{1/p} \, t^\tau
  \end{equation}
  where $B:=\Ball^m(\frac{z_1+z_2}2,t)$ is an $m$-dimensional disk in $P$,
  $\tau:=1-m/p$, and
  \begin{equation}
    \label{EonB} E_B= \int_{F(\! B)}\GCi(z)^p
    \,\, d\H^m(z)
  \end{equation}
  is the local curvature energy of $\Sigma$ (with $i=1$ or $i=2$, respectively)
  over $B$. In the case of global tangent-point curvature $\GTP$ one can use
  \eqref{osc-error} with $A(m)=2$.
\end{lemma}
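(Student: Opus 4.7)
The plan is a slicing argument in the spirit of \cite{ssvdm-triple} and \cite[Section~6]{svdm-tp1}. The integrand $\Psi(\xi) := \GCi(F(\xi))$ is $p$-integrable over $B$ with $\int_B \Psi^p \le C(m) E_B$ (using that $F$ is bi-Lipschitz with constants close to $1$ by \eqref{flatgraph}). Pick the threshold
\[
\Lambda := C_0(N,m) \Bigl(\frac{E_B}{t^m}\Bigr)^{1/p}
\]
with $C_0$ large enough so that Chebyshev's inequality gives, for $j=1,2$,
\[
\H^m\bigl(\{\xi \in \Ball^m(z_j, 2t/N) : \Psi(\xi) > \Lambda\}\bigr) < \tfrac 12\, \omega_m (2t/N)^m.
\]
Note that $\Ball^m(z_j, 2t/N) \subset B$ for $j=1,2$, which is the reason for the assumption $N > 4$. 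Hence the ``good'' set $G := \{\xi \in B : \GCi(F(\xi)) \le \Lambda\}$ occupies the majority of each small ball, and we may select good points $\xi_j \in G \cap \Ball^m(z_j, 2t/N)$ for $j=1,2$.

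The geometric heart is to convert the pointwise bound at $\xi_1, \xi_2$ into an angle estimate between the tangent planes $T_{F(\xi_1)}\Sigma$ and $T_{F(\xi_2)}\Sigma$. For $\GTP$, the definition gives directly
\[
\dist\bigl(y,\, F(\xi_j) + T_{F(\xi_j)}\Sigma\bigr) \le \tfrac{\Lambda}{2}\,|y - F(\xi_j)|^2 \quad\text{for all } y \in \Sigma.
\]
Using that $G$ occupies at least half of $B$, a Fubini-type selection yields $m-1$ auxiliary good points $\xi_3,\ldots,\xi_{m+1} \in G$ in $B$ such that $(\xi_2-\xi_1, \xi_3-\xi_1,\ldots,\xi_{m+1}-\xi_1)$ is a $(ct,\eps,\delta)$-basis of $P$ with $\eps,\delta$ depending only on $m$. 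Applying the tangent-point inequality at $\xi_1$ and at $\xi_2$ with $y = F(\xi_k)$ shows that both planes $T_{F(\xi_1)}\Sigma$ and $T_{F(\xi_2)}\Sigma$ lie within distance $C(m)\Lambda t^2$ of $\aff\{F(\xi_1),\ldots,F(\xi_{m+1})\}$. Propositions~\ref{prop:dist-ang} and~\ref{prop:red-ang} then yield
\[
\dgras\bigl(T_{F(\xi_1)}\Sigma, T_{F(\xi_2)}\Sigma\bigr) \le C(m)\, \Lambda\, t,
\]
which by the flatness \eqref{flatgraph} translates to $|Df(\xi_1) - Df(\xi_2)| \le C(N,m,p)\,E_B^{1/p}\, t^{\tau}$. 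The triangle inequality
\[
|Df(z_1) - Df(z_2)| \le |Df(z_1)-Df(\xi_1)| + |Df(\xi_1)-Df(\xi_2)| + |Df(\xi_2)-Df(z_2)|,
\]
together with the bound $|Df(z_j)-Df(\xi_j)| \le \Phi^*(2t/N, B)$ (valid since $\Ball^m(z_j, 2t/N) \subset B$), closes the proof with $A(m)=2$ in the $\GTP$ case.

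For $\GMC$ the scheme is the same, but the geometric step is replaced by the voluminous-simplex machinery of Section~2.3: one selects good auxiliary points $\xi_3,\ldots,\xi_{m+2}\in G\cap B$ so that $\conv(F(\xi_1),\ldots,F(\xi_{m+2}))$ is $(\eta(m),ct)$-voluminous, and then $\GMC(F(\xi_j))\le\Lambda$ together with \eqref{volume_simplex} bounds the relevant heights, hence again the angle between the two tangent planes. The additional structural rigidity of the simplex condition may enlarge the $\Phi^*$-multiplier, so we only claim a generic $A(m)$ in that case. The main obstacle is the geometric conversion step: extracting an angle estimate between two tangent planes from pointwise control of a curvature that is by its nature a \emph{sup} over distant points. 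This requires the Fubini-type selection of auxiliary good points in the prescribed quasi-orthogonal configuration, and is exactly where the uniform flatness \eqref{flatgraph} (coming from the previous subsection) is crucial, since it ensures that $G$ is massive not only on average but on most sub-balls of $B$ on the relevant scale.
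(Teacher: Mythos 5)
Your overall scaffolding (Chebyshev threshold, two good points near $z_1,z_2$, triangle inequality to isolate $|Df(\xi_1)-Df(\xi_2)|$) matches the paper's, but the geometric conversion to an angle estimate is done quite differently in both cases, and in the Menger case your route cannot work. You propose selecting $\xi_3,\ldots,\xi_{m+2}\in G\cap B$ so that $T=\conv(F(\xi_1),\ldots,F(\xi_{m+2}))$ is $(\eta(m),ct)$-voluminous. This is impossible: after Corollary~\ref{C1kappa-second} and Remark~\ref{rem:flat} the whole patch lies on a flat graph with $|Df|<\eps_0(m)$, so \emph{every} $(m+1)$-simplex with vertices on $\Sigma\cap\Ball^n(0,20R)$ has all vertices within $O(\eps_0\diam T)$ of a fixed $m$-plane, hence $\hmin(T)\lesssim\eps_0\diam T<\eta(m)\diam T$ once $\eps_0$ is chosen small (which is exactly how the bootstrap is set up). Voluminous simplices are used in the paper only where one wants a \emph{lower} bound for $\GMC$ at intermediate scales (Sections~3.1--3.2); in the bootstrap the paper instead uses a deliberately \emph{thin} simplex with base $\conv\{F(u_1),F(x_1),\ldots,F(x_m)\}$ close to a secant plane and apex $F(\zeta)$, converting the $\GMC$ bound at $u_1$ into the single-height estimate $\dist(F(\zeta),P_1)\lesssim K_0 t^{1+\tau}$. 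Moreover, since $\GMC$ makes no reference to tangent planes, the passage from simplex data to $\ang(T_{F(\xi_1)}\Sigma,T_{F(\xi_2)}\Sigma)$ must go through secant planes (this is the $\ang(H_1,X)+\ang(X,Y)+\ang(Y,H_2)$ decomposition), which is precisely where the term $A(m)\Phi^\ast(2t/N,B)$ with $A(m)>2$ enters; your sketch never accounts for this.

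In the tangent-point case your idea is plausible in spirit, but it differs from the paper's and is left with nontrivial gaps. The paper never selects auxiliary good points for $\GTP$: it shows that \emph{all} of $F(G)$ sits within distance $h_0\approx K_0 t^{1+\tau}$ of both affine tangent planes $P_1,P_2$, then uses Lemmas~\ref{proj-slab} and~\ref{strip-ball} to bound $\H^m(\pi_{H_1}(F(G)))\lesssim t^{m-1}h_0/\alpha$, and closes by comparing with the lower bound $\H^m(G)\ge(1-N^{-m})\H^m(B)$. Your replacement --- pinning down the tangent planes by $m-1$ auxiliary good points forming a near-orthonormal frame --- could work, but as written the frame $(\xi_2-\xi_1,\xi_3-\xi_1,\ldots,\xi_{m+1}-\xi_1)$ is not a $(\rho,\eps,\delta)$-basis with $\eps<1/2$: to stay inside $B$ the points $\xi_3,\ldots,\xi_{m+1}$ must be chosen a bounded distance (roughly $t/4$) from $\xi_1$, whereas $|\xi_2-\xi_1|\approx t$, so the lengths differ by a fixed factor and Proposition~\ref{prop:red-ang} does not apply directly; the ``Fubini-type selection'' with the precise $(\eps,\delta)$-constants also needs to be spelled out. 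These are fixable, but the crucial missing piece is the $\GMC$ argument, where the voluminous-simplex premise must be abandoned in favour of the secant-plane/thin-simplex estimate.
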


\noindent\textbf{Remark.} Once this lemma is proved, one can fix
an $m$-dimensional disk $$\Ball^m(b,s)\subset \Ball^n(0,R)\cap P$$ and use
\eqref{osc-error} to obtain for $ t\le s$
\begin{multline}
  \label{pre-morrey} \Phi^\ast(t, \Ball^m(b,s))\le A(m) \Phi^\ast
  \bigl(4t/N, \Ball^m(b,s+2t) \bigr) \\{}+ C( N,m,p)\, M_p^i
  (b,s+2t)\,
  t^\tau\, , \quad \tau=1-\frac mp\, ,
\end{multline}
where
\[
M_p^i (b,r):= \left(\int_{F(\! \Ball^m(b,r))}
  \GCi(z)^p \,\, d\H^m(z)\right)^{1/p}\quad\Fo i=1,2.
\]
We fix $i$ and then  a large $N=N(i,m,p)>4$ such that  
$A(m)(4/N)^{\kappa_i}<1/2$. This yields $A(m)^j\cdot (2/N)^{j\kappa_i}\to
0$ as $j\to \infty$. Therefore, one can iterate
\eqref{pre-morrey} and eventually show that
\begin{eqnarray}\label{FINAL}
  \osc_{\Ball^m(b,s)} D f  & \le & C'( m,p) M_p^i(b, 5s)\cdot s^\tau
  \\
  & = & C'( m,p)\left(\int_{F(\! \Ball^m(b,5s))}
    \GCi(z)^p \,\, d\H^m(z)\right)^{1/p}\, \cdot s^\tau\, , \quad \tau=1-\frac mp\, .\notag
\end{eqnarray}
Thus, in particular, we have the following.

\begin{corollary}[\textbf{Geometric Morrey-Sobolev embedding into $C^{1,\tau}$}]\label{cor:optimal}
  Let $p>m$ and 
  $\Sigma \subset \R^n$ be an $m$-fine set 
  \[
  \int_\Sigma \GCi(z)^p\, d\H^m(z) \le E <\infty
  \]
  for $i=1$ or $i=2$. Then $\Sigma$ is an embedded closed manifold of class
  $C^{1,\tau}$, where $\tau = 1 - m/p$.  Moreover we can find a radius $R_2
  =a_2(n,m,p) E^{-1/(p-m)}\le R_1$, where $a_2(n,m,p)$ is a constant depending
  only on $n$, $m$ and $p$, and a constant $K_2 = K_2(n,m,p)$ such that for each
  $x \in \Sigma$ there is a function
  $$
  f:T_x\Sigma=:P
  \cong\R^m\to P^\perp\cong\R^{n-m}
  $$
  of class $C^{1,\tau}$, such that $f(0)=0$ and
  $D f(0)=0$, and
  $$
  \Sigma\cap\Ball^n(x,R_2)=x+\Big(\graph f\cap\Ball^n(0,R_2)\Big),
  $$
  where $\graph f\subset P\times P^\perp=\R^n$ denotes the
  graph of $f$, and we have
  \begin{equation}
    \label{pre-ptwise}
    |D f(z_1)-D f(z_2)|\le K_2\biggl(\int_{U(z_1,z_2)} \GCi\bigl((z,f(z)\bigr)^p\, dz\biggr)^{1/p} |z_1-z_2|^\tau
  \end{equation}
  for all $z_1,z_2\in \Ball^n(0,{R_2}) \cap P$, where
  \[
  U(z_1,z_2) = \Ball^m((z_1+z_2)/2,5|z_1-z_2|) \,. 
  \]
\end{corollary}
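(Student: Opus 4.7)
The plan is to combine the initial $C^{1,\kappa_i}$ graph parametrizations from Corollary~\ref{C1kappa-second} with the iteration scheme sketched in the Remark following Lemma~\ref{key}, in order to upgrade the H\"older exponent of $Df$ from $\kappa_i$ to the optimal $\tau = 1 - m/p$.

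First I would fix $x \in \Sigma$ and apply Corollary~\ref{C1kappa-second}, possibly after shrinking the constant $a(n,m,p)$ and invoking Remark~\ref{rem:flat}, to obtain, after translating $x$ to the origin and setting $P := T_x\Sigma$, a $C^{1,\kappa_i}$ function $f \colon P \cong \R^m \to P^\perp \cong \R^{n-m}$ with $f(0)=0$, $Df(0)=0$, and $|Df| < \eps_0(m)$, such that $\Sigma \cap \Ball^n(0,20R) = \graph f \cap \Ball^n(0,20R)$ for some radius $R \approx E^{-1/(p-m)}$. The flatness $|Df| < \eps_0(m)$ is what lets Lemma~\ref{key} be applied at every point of the patch, since all tangent planes $T_z\Sigma$ then stay sufficiently close on the Grassmannian.

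Next I would introduce the maximal oscillation $\Phi^*(\varrho,A)$ from~\eqref{Phi-ast}, set $R_2 := R/4$, and fix any closed disk $\Ball^m(b,s) \subset \Ball^n(0,R_2) \cap P$. Applying Lemma~\ref{key} to pairs $z_1, z_2 \in \Ball^m(b,s)$ with $t := |z_1-z_2| \le s$ and taking the supremum over such pairs yields the recursive estimate~\eqref{pre-morrey}:
\[
\Phi^*(t,\Ball^m(b,s)) \le A(m)\,\Phi^*(4t/N,\Ball^m(b,s+2t)) + C(N,m,p)\,M_p^i(b,s+2t)\,t^\tau.
\]
I would then choose $N = N(m,p) > 4$ so that $A(m)(4/N)^{\kappa_i} < 1/2$, which forces $A(m)(4/N)^\tau < 1/2$ as well, since $\tau > \kappa_i$ for both $i=1,2$. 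Iterating the inequality $j$ times produces a main contribution bounded by a convergent geometric sum of terms $A(m)^k(4/N)^{k\tau}\,C(N,m,p)\,M_p^i(b,5s)\,t^\tau$, together with a residual of the form $A(m)^j \Phi^*((4/N)^j t, \cdot)$, which by the a priori bound~\eqref{apriori} is at most $C\bigl(A(m)(4/N)^{\kappa_i}\bigr)^j t^{\kappa_i}$ and therefore tends to $0$ as $j \to \infty$. The enlarging radii satisfy $s_j \le s + 2t \sum_{k=0}^{j-1}(4/N)^k \le s + 2tN/(N-4) \le 5s$ for $t \le s$ and $N$ large, so the iteration stays inside the coordinate patch.

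Passing to the limit gives the Morrey-type bound~\eqref{FINAL}, and specializing to $b = (z_1+z_2)/2$, $s = t = |z_1-z_2|$ yields~\eqref{pre-ptwise} with a constant $K_2 = K_2(n,m,p)$. The principal technical obstacle is to line up the geometric constants so that (a) the iteration converges at the improved rate $\tau$ rather than $\kappa_i$, (b) the expanding balls $\Ball^m(b,s_j)$ remain inside the flat graph patch fixed at the start, and (c) the final constant $K_2$ depends only on $n,m,p$, with no residual dependence on the $C^{1,\kappa_i}$ norm of $f$. Point (c) is precisely what promotes the H\"older exponent from $\kappa_i$ to the optimal $\tau$, and it is built into the explicit geometric structure of Lemma~\ref{key}.
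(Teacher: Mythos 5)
Your proposal is correct and follows essentially the same route as the paper: apply Corollary~\ref{C1kappa-second} together with Remark~\ref{rem:flat} to get a flat $C^{1,\kappa_i}$ graph patch, feed Lemma~\ref{key} into the recursive bound~\eqref{pre-morrey} for the oscillation maximal function $\Phi^*$, choose $N$ large so that $A(m)(4/N)^{\kappa_i}<1/2$ (which, since $\tau>\kappa_i$, simultaneously forces convergence of the geometric sum at rate $\tau$ and decay of the residual term via the a priori estimate~\eqref{apriori}), and verify that the expanding balls stay inside $\Ball^m(b,5s)$. You also correctly identify the key point (c) -- that the final constant $K_2$ carries no dependence on the $C^{1,\kappa_i}$ norm of $f$, because the a priori estimate is used only to kill the residual -- which is exactly what yields the exponent upgrade from $\kappa_i$ to $\tau$. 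The only slip is cosmetic: to pass from the Morrey bound~\eqref{FINAL} to the pointwise bound~\eqref{pre-ptwise} one should take $b=(z_1+z_2)/2$ and $s=|z_1-z_2|/2$ (so that $z_1,z_2\in\Ball^m(b,s)$ and $\Ball^m(b,5s)\subset U(z_1,z_2)$), rather than $s=t=|z_1-z_2|$; this only changes the constant by a harmless factor.
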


\medskip

The rest of this section is devoted to the proof of Lemma~\ref{key} for each
of the global curvatures $\GCi$. We follow the lines of \cite{slawek-phd} and
\cite{svdm-tp1} with some technical changes and necessary adjustments.

\subsubsection{Slicing: the setup. Bad and good points.}

We fix $z_1,z_2$ and the disk $B=\Ball^m(\tfrac{z_1+z_2}{2},t)$ as in the
statement of Lemma \ref{key}; we have $\H^m(B)=\omega_mt^m$. Pick $N>4$ and let
$E_B$ be the curvature energy of $\Sigma$ over $B$, defined for $i=1$ or $i=2$
by \eqref{EonB}. Assume that $D f\not\equiv \mathrm{const}$ on $B$, for
otherwise there is nothing to prove.

\smallskip 
Take
\begin{equation}
  \label{K0}
  K_0:= \left( E_B\cdot N^{m} \omega_m^{-1}\right)^{1/p} > 0
\end{equation}
and consider the set of \emph{bad points} where the global curvature becomes
large,
\begin{equation}
  Y_0 := \{ \xi\in B \ \colon \
  \GCi(F(\xi))>  K_0t^{-1+\tau}=K_0t^{-m/p}\}\, .\label{bad}
\end{equation}
We now estimate the curvature energy to obtain a bound for $\H^m(Y_0)$. For this
we restrict ourselves to a portion of $\Sigma $ that is described as the graph
of the function $f$.
\begin{eqnarray*}
  E_B & =&\int_{F( B)} \GCi(z)^p \,\,
  d\H^m(z)\\
  &\ge & \int_{F( Y_0)} \GCi(z)^p\,d\H^m(z)\\
  &  = & 
  \int_{Y_0}\GCi(F(\xi))^p\sqrt{\det\Big(\Big[\begin{array}{c}
      \Id_{\R^m}\\
      Df(\xi)\end{array}\Big]^T
    \Big[\begin{array}{c}
      \Id_{\R^m}\\
      Df(\xi)\end{array}\Big]\Big)}
  \,d\xi
  \\ 
  &\ge & \int_{Y_0} \GCi(F(\xi))^p\, d\xi \\
  &\stackrel{\eqref{bad}}{>} & \H^m(Y_0)
  K_0^p t^{-m} \ = \ E_B \H^m(Y_0) N^{m}
  \bigl(\H^m(B)\bigr)^{-1}\, .
\end{eqnarray*}
The last equality follows from the choice of $K_0$ in \eqref{K0}. Thus, we obtain
\begin{equation}
  \label{smallbad}
  \H^m(Y_0) < \frac{1}{N^m} \H^m(B)=\omega_m
  \frac{t^m}{N^m},
\end{equation}
and since the radius of $B$ equals $t$, we obtain
\begin{equation}
  \label{close}
  \Ball^m (z_j,t/N) \setminus Y_0 \not=\emptyset
  \qquad\mbox{for $j=1,2$.}
\end{equation}
Now, select two \emph{good points} $u_j \in \Ball^m(z_j,t/N) \setminus Y_0$
($j=1,2$). By the triangle inequality,
\begin{eqnarray}
  |D f(z_1)-D f(z_2)| & \le & |D f(z_1)-D f
  (u_1)| + |D f(u_2)-D f(z_2)|\nonumber \\
  & & {} + |D f(u_1)-D f(u_2)| \nonumber \\
  & \le & 2 \Phi^\ast(t/N,B) + |D f(u_1)-D f(u_2)|\, .\label{zj-uj}
\end{eqnarray}
Thus, we must only show that for \emph{good} $u_1,u_2$ the 
last term in \eqref{zj-uj} 
satisfies
\begin{equation}
  \label{goodpts-est}
  |D
  f(u_1)-D f(u_2)|\le 
  A(m) \Phi^\ast(2t/N,B) + C(N,m,p) E_B^{1/p} t^\tau\, .
\end{equation}
This has to be done for each of the global curvatures $\GCi$. (It will turn out
that for $\GTP$ one can use just the second term on the right hand side of
\eqref{goodpts-est}.)

\subsubsection{Angles between good planes: the `tangent-point' case}

We first deal with the case of $\GTP$ which is less complicated. To verify
\eqref{goodpts-est}, we assume that $D f(u_1)\not= D f(u_2)$ and work with the
portion of the surface parame\-tri\-zed by the points in the \emph{good set}
\begin{equation}
  \label{G} G:= B\setminus Y_0 .
\end{equation}
By \eqref{smallbad}, $G$ satisfies
\begin{equation}
  \label{G-below} \H^m(G) > (1- N^{-m}) \H^m(B) =: C_1( p,m)\,
  t^m\, .
\end{equation}
To conclude the whole proof, we shall derive -- for each of the two global
curvatures -- an upper estimate for the measure of $G$,
\begin{equation}
  \label{G-above} \H^m(G) \le  C_2(p,m)\, K_0\,
  \frac{t^{m+\tau}}{\alpha}, \,
\end{equation}
where $\alpha:=\ang (H_1,H_2)\not= 0$ and $H_i:=T_{F(u_i)}\Sigma$
denotes the tangent plane to $\Sigma$ at $F(u_i)\in \Sigma$ for $i=1,2.$
Combining \eqref{G-above} and \eqref{G-below}, we will then obtain
\[
\alpha <   (C_1)^{-1} C_2 K_0 t^\tau =:  C_3 E_B^{1/p} 
t^\tau\,
.
\]
(By an elementary reasoning analogous to the proof of Theorem~5.7
in \cite{svdm-tp1}, this also yields an estimate for the oscillation of $D f$.)

Following \cite[Section~6]{svdm-tp1} closely, we are going to 
prove the upper estimate \eqref{G-above} for $\H^m(G)$. 

By Corollary \ref{C1kappa-second} and Remark \ref{rem:flat}
$$
\Sigma\cap\Ball^n(F(u_1),20R)=F(u_1)+\Big(\graph
f_1\cap\Ball^n(0,20R)\Big),
$$
i.e, that portion of $\Sigma$ near $F(u_1)\in\Sigma$ is a graph of a
$C^{1,\kappa_2}$ function $f_1:H_1:= T_{F(u_1)}\Sigma\to H_1^\perp$ with
$|\nabla f_1|<\eps_0(m)\ll 1.$ As $G\subset B=\Ball^m(\tfrac{z_1+z_2}{2},t)$
with $z_i\in \Ball^n(0,2R)\cap P,$ $t=|z_1-z_2|\le 4R,$ and
$u_i\in\Ball^m(z_i,t/N)$ (see \eqref{close}), we have the inclusion
$$
G\subset \Ball^m(0,6R)\subset\Ball^m(u_1,6R+2R+t/N)\subset\Ball^m(u_1,10R),
$$
and, as $F$ is $2$-Lipschitz, $F(G)\subset\Ball^n(F(u_1),20R),$
i.e., $F(G)\subset x+\Big(\graph f_1\cap\Ball^n(0,20R)\Big).$
Thus, since $\eps_0(m)$ is small,
\begin{eqnarray*}
  \H^m(F(G))&= &
  \int_{\pi_{H_1}(F(G))}\sqrt{\det\Big(\Big[\begin{array}{c}
      \Id_{\R^m}\\
      Df_{1}(\xi)\end{array}\Big]^T
    \Big[\begin{array}{c}
      \Id_{\R^m}\\
      Df_{1}(\xi)\end{array}\Big]\Big)}
  \,d\xi\\
  &< &\int_{\pi_{H_1}(F(G))}\sqrt{2}\,d\xi=\sqrt{2}\H^m(\pi_{H_1}(F(G))).
\end{eqnarray*}
Therefore, 
$$
\H^m(G)\le\H^m(F(G))<\sqrt{2}\H^m(\pi_{H_1}(F(G))),
$$
so that \eqref{G-above} would follow from
\begin{equation}
  \H^m\bigl(\pi_{H_1}(F(G))\bigr)\le C_4(m)\, K_0\,
  \frac{t^{m+\tau}}{\alpha}\, . \label{proj-G-H1}
\end{equation}
To achieve this, we shall use the definition of $\GTP$ combined with the
properties of intersections of tubes stated in Lemma~\ref{proj-slab}. To shorten
the notation, we write
\[
\frac{1}{\rtp(x,y;T_x\Sigma)}\equiv\frac{1}{\rtp(x,y)}\, , \qquad x,y\in \Sigma\, .
\]
For an arbitrary $\zeta \in G$ and $i=1,2$ we have by \eqref{bad}
\begin{align*}
  \frac{1}{\rtp(F(u_i),F(\zeta))}& =
  \frac{2\bigl|Q_{H_i}(F(\zeta)-F(u_i))\bigr|}{|F(\zeta)-F(u_i)|^2}
  \\
  &\le \GTP(F(u_i))
  \le
  K_0 t^{-1+\tau}\, .
\end{align*}
Let $P_i=F(u_i)+H_i$ be the affine tangent plane to $\Sigma$ at
$F(u_i)$. Since $F$ is Lipschitz with constant $(1+\eps_0)<2$
and
$|\zeta-u_i|\le  2 t$,
\begin{eqnarray}
  \dist (F(\zeta),P_i) & = & \dist (F(\zeta)-F(u_i), H_i) \label{z-Pi} \\
  & = & \bigl|Q_{H_i}(F(\zeta)-F(u_i))\bigr|\  <  8K_0
  t^{1+\tau} =: h_0 \nonumber
\end{eqnarray}
for $\zeta\in G$, $i=1,2$.  Select the points $p_i\in P_i$, $i=1,2$,
so that $|p_1-p_2|=\dist (P_1,P_2)$. The vector $p_2-p_1$ is then
orthogonal to $H_1$ and to $H_2$, and since $G$ is nonempty by
\eqref{G-below}, we have $|p_1-p_2| < 2h_0$ by \eqref{z-Pi}.

Set $p=(p_1+p_2)/2$, pick a parameter $\zeta\in G$ and consider
$y=F(\zeta)-p$. We have
\[
y= (F(\zeta) - F(u_1)) + (F(u_1)-p_1) + (p_1-p),
\]
so that $\pi_{H_1}(y) = \pi_{H_1} (F(\zeta)-F(u_1)) + (F(u_1) - p_1)
$, and
\begin{eqnarray*}
  |y-\pi_{H_1}(y)| & = & |(p_1-p) + F(\zeta) - F(u_1) -  \pi_{H_1}
  (F(\zeta)-F(u_1))|\\
  & = & |(p_1-p) + Q_{H_1}(F(\zeta) - F(u_1)) |
  \, .
\end{eqnarray*}
Therefore, since $|p-p_1|\le h_0$ and by  \eqref{z-Pi}, $
|y-\pi_{H_1}(y)| < h_0 + h_0 = 2h_0$. In the same way, we obtain
$|y-\pi_{H_2}(y)| < 2h_0$. Thus,
\[
\frac{y}{2h_0}=\frac{F(\zeta)-p}{2h_0} \in S(H_1,H_2),
\]
where $S(H_1,H_2)=\{x\in \R^n \colon \dist(x,H_j)\le 1\mbox{ for
  $j=1,2$}\}$ is the intersection of two tubes around the planes $H_j$ considered in
Section~2.2. Applying Lemma~\ref{proj-slab} which is possible due to the estimate \eqref{tan-angles} for $\ang(H_1,H_2)$, we
conclude that there exists an $(m-1)$-dimensional subspace
$W\subset H_1$ such that
\begin{equation}
  \label{strip} \pi_{H_1}(F(G)-p) \subset \{x\in H_1\, \colon\,
  \dist (x,W)\le 2h_0\cdot 5c_2/\alpha\}\, .
\end{equation}
On the other hand, since $F$ is $2$-Lipschitz, 
we certainly have
$$F(G)\subset  \Ball^n\Bigl(F(\frac{z_1+z_2}2),2t\Bigr)$$ 
and therefore
\begin{equation}
  \label{ball} \pi_{H_1}(F(G)-p)\subset \Ball^n(a,2t), \qquad
  a:=\pi_{H_1}(F( \frac{z_1+z_2}2)-p).
\end{equation}
Combining \eqref{strip}--\eqref{ball}, we use
Lemma~\ref{strip-ball} for the plane $H:=H_1\in G(n,m)$, the set $S':=\pi_{H_1}(F(G)-p)$, and
$d:=2h_05c_2/\alpha$, to obtain
\begin{equation}
  \label{last-gtp}
  \H^m\bigl(\pi_{H_1}(F(G))\bigr) \le 4^{m-1}t^{m-1} \cdot 20h_0
  c_2/\alpha =: C_4 (m) K_0\frac{t^{m+\tau}}{\alpha}
\end{equation}
by definition of $h_0$ in \eqref{z-Pi}, which is the desired \eqref{proj-G-H1}, implying \eqref{G-above} and thus
completing the bootstrap estimates in the case of the global tangent-point curvature $\GTP$.

\subsubsection{Angles between good planes: the `Menger' case}

To obtain \eqref{goodpts-est} for the global Menger curvature $\GMC$, one
proceeds along the lines of \cite{slawek-phd}, with a few necessary changes.

The main difference between $\GTP$ and $\GMC$ is that the control of $\GTP$
directly translates to the control of the angles between the tangent
planes. In the case of $\GMC$ an extra term is necessary. Namely, we choose
$x_1,\ldots, x_m\in P$ so that
\[
|x_i-u_1| =\frac tN, \qquad i=1,2,\ldots,m
\]
and the vectors $x_i-u_1$ form and ortho-$\rho$-normal basis of $P$ with
$\rho=t/N$; see Definition \ref{ortho_basis}.  Analogously, we choose
$y_1,\ldots, y_m\in P$ close to $u_2$. Next, setting as before
$H_j=T_{F(u_j)}\Sigma$, we write
\begin{align}
  |D f(u_1)-D f(u_2)|&\lesssim \ang (H_1,H_2)\label{tan-sec}\\
  &\le \ang (H_1,X) + \ang(X,Y) + \ang (Y,H_2)\, ,\notag
\end{align}
with the constant in \eqref{tan-sec} depending on $m$ only, where
\begin{align*}
  X &= \mathrm{span}\, (F(x_1)-F(u_1),F(x_2)-F(u_1),\ldots,F(x_m)-F(u_1))\\
  Y &= \mathrm{span}\, (F(y_1)-F(u_2),F(y_2)-F(u_2),\ldots,F(y_m)-F(u_2))\, 
\end{align*}
are the secant $m$-dimensional planes, approximating the tangent ones. A
technical but routine calculation, relying on the fundamental theorem of
calculus (see e.g.  \cite[Proof of Thm. 4.3]{slawek-phd} or (for $m=2$) Step 4
of the proof of Theorem 6.1 in \cite{svdm-surfaces}), shows that if the
constant $\eps_0=\eps_0(m)>0$ controlling the oscillation of $Df$ is chosen
small enough then
\[
\ang (H_1,X)+\ang(Y,H_2)\le C(m) \Phi^\ast (2t/N,B)\, ,
\]
and consequently
\begin{equation}
  \label{almost-end}
  |D f(u_1)-D f(u_2)|\le A(m) \Phi^\ast (2t/N,B) + C(m) \ang(X,Y) \,,
\end{equation}
where $C(m)$ comes from~\eqref{tan-sec}. Thus, it remains to estimate the
angle between the secant planes $X,Y$ approximating the tangent ones
$H_1,H_2$. The estimate of $\ang(X,Y)$ is very similar to the computations
carried out in Section 3.3.2 for the global-tangent point curvature. Here is the
crux of the argument.

We let $G=B\setminus Y_0$ be the good set defined in \eqref{G}. Shrinking
$\eps_0=\eps_0(m)$ if necessary, we may assume that
\begin{equation}\label{SMALLANGLE}
  \ang (X,Y)\le \eps_1(m)
\end{equation}
where $\eps_1(m)$ is sufficiently small. Then,
\[
\H^m(G)\le \H^m(F(G))\le 2H^m(\pi_X(F(G)))\, ,
\]
and the strategy is to show a counterpart of \eqref{proj-G-H1}, namely
\begin{equation}
  \label{proj-G-X}
  \H^m\bigl(\pi_{X}(F(G))\bigr)\le C_5\, K_0\,
  \frac{t^{m+\tau}}{\alpha}\, ,\qquad\alpha=\ang(X,Y)\, .
\end{equation}
Comparing this estimate with the lower bound \eqref{G-below} for the measure of
$G$, one obtains
\[
\ang(X,Y)\lesssim K_0t^\tau= \mathrm{const}\cdot E_B^{1/p} t^\tau
\]
which is enough to conclude the proof of Lemma~\ref{key} also in the case of the
global Menger curvature $\GMC$.

Now, to verify \eqref{proj-G-X}, we select a point 
$\zeta\in B=\Ball^m(\tfrac{z_1+z_2}{2},t)$ with
\[
|\zeta-u_j|\approx |F(\zeta)-F(u_j)|\approx \frac t2, \qquad 
j=1,2
\]
(one can arrange to have constants here close to $1$ by the initial uniform
smallness of $\varepsilon_0(m)$ in \eqref{flatgraph}). Then, the $(m+1)$-simplex
$T$ with vertices at $F(u_1)$, $F(x_1)$, \ldots, $F(x_m)$, $F(\zeta)$ is of
diameter $\approx t$. The face
\[
\face_{m+1}(T)=\conv\bigl\{F(u_1), F(x_1), \ldots, F(x_m)\bigr\}
\]
is spanned by $m$ nearly orthogonal edges $F(x_i)-F(u_1)$, of length roughly
$t/N$ each, and therefore $\H^m(\face_{m+1}(T))\approx t^m$. Thus, setting now
$P_1=F(u_1)+X$, and keeping in mind that $u_1\not \in Y_0$ (see \eqref{bad}), we
obtain by means of \eqref{volume_simplex}
\begin{align*}
  K_0t^{-1+\tau} &\ge \GMC(F(u_1))\\
  &\ge K(F(u_1),F(x_1), \ldots,F(x_m),F(\zeta))
  \approx \frac{t^m\dist(F(\zeta),P_1)}{t^{m+2}}.
\end{align*}
Thus,
\begin{equation}
  \label{zeta-P1} \dist(F(\zeta),P_1)\le C(m) K_0 t^{1+\tau}\, ,
\end{equation}
and the same estimate holds for $\dist(F(\zeta),P_2)$ where
$P_2=F(u_2)+Y$. Thus, we have a counterpart of \eqref{z-Pi} in the previous
subsection. From that point we reason precisely like in Section 3.3.2, between
\eqref{z-Pi} and \eqref{last-gtp}, where at one point we need to use
\eqref{SMALLANGLE}.  This completes the proof of Lemma~\ref{key} in the case of
global Menger curvature $\GMC$.

\subsection{$W^{2,p}$ estimates for the graph patches}

We now show that Corollary~\ref{cor:optimal} combined with the result of Haj\l{}asz, cf. Theorem~\ref{hajlasz-ptwise}, easily yields the following.

\begin{theorem}[\textbf{Sobolev estimates}] \label{W2p-estimates}
  Let $\Sigma \subset \R^n$ be an $m$-fine set with
  \[
  \int_\Sigma \GCi(z)^p\, d\H^m(z) \le E <\infty
  \]
  for $i=1$ or $i=2$. Then $\Sigma$ is an embedded closed manifold of class $C^{1,\tau}\cap W^{2,p}$, where $\tau = 1 - m/p$.

  Moreover we can find a radius $R_3 = a_3(n,n,p)E^{-1/(p-m)}\le
  R_2$,where $a_3(n,m,p)$ is a constant depending only on 
  $n,m$, and $p$, 
  and a constant $K_3 =
  K_3(n,m,p)$ such that
  for each $x \in \Sigma$  there is a function
  $$
  f:T_x\Sigma=:P
  \cong\R^m\to P^\perp\cong\R^{n-m}
  $$
  of class $C^{1,\tau}\cap W^{2,p}$, such that $f(0)=0$
  and $Df(0)=0$, and 
  $$
  \Sigma\cap\Ball^n(x,R_3)=x+\Big(\graph f\cap\Ball^n(0,R_3)\Big),
  $$
  where $\graph f\subset P\times P^\perp=\R^n$ denotes the
  graph of $f$
\end{theorem}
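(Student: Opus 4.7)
My plan is to combine the pointwise estimate \eqref{pre-ptwise} from Corollary~\ref{cor:optimal} with Haj\l{}asz's characterization of Sobolev spaces, Theorem~\ref{hajlasz-ptwise}. The $C^{1,\tau}$ regularity and the existence of the graph patches $f$ are already delivered by Corollary~\ref{cor:optimal}; what remains is to upgrade $f$ from $C^{1,\tau}$ to $W^{2,p}$ and to track the uniform dependence of $R_3$ on $n,m,p,E$.

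Shrinking the radius if necessary, I would first take $R_3 := R_2/11$, so that for every pair $z_1, z_2 \in \Ball^m(0, R_3)$ the ball $U(z_1, z_2) = \Ball^m((z_1+z_2)/2, 5|z_1-z_2|)$ sits inside $\Ball^m(0, R_2)$, where \eqref{pre-ptwise} is available. Next, I would fix an auxiliary exponent $p' \in (m, p)$; since $\Sigma$ is compact, $\GCi \in L^p(\Sigma, \H^m)$ implies $\GCi \in L^{p'}(\Sigma, \H^m)$, so Corollary~\ref{cor:optimal} applies verbatim with $p'$ in place of $p$ and yields
$$
|Df(z_1) - Df(z_2)| \le K_2' \left(\int_{U(z_1,z_2)} g(z)^{p'}\, dz\right)^{1/p'} |z_1 - z_2|^{1 - m/p'},
$$
where $g(z) := \GCi\bigl((z, f(z))\bigr)$, extended by zero outside $\Ball^m(0, R_2)$. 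Since $|U(z_1,z_2)| = \omega_m (5|z_1-z_2|)^m$, pulling this volume factor outside converts the integral into a ball average, which the inclusion $U(z_1, z_2) \subset \Ball^m(z_1, \tfrac{11}{2}|z_1-z_2|)$ dominates by the Hardy--Littlewood maximal function $M(g^{p'})(z_1)$. Symmetrising in $z_1, z_2$, I arrive at a Haj\l{}asz-type pointwise inequality
$$
|Df(z_1) - Df(z_2)| \le C(m,p)\, |z_1 - z_2| \bigl(h(z_1) + h(z_2)\bigr), \qquad h := \bigl(M(g^{p'})\bigr)^{1/p'}.
$$

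To finish, I would note that $p/p' > 1$, so the strong-type Hardy--Littlewood maximal inequality delivers
$$
\int_{\R^m} h^p \, dz = \int_{\R^m} \bigl(M(g^{p'})\bigr)^{p/p'} dz \le C(m, p, p') \int_{\R^m} g^p \, dz \le C(n,m,p,p')\, E,
$$
so $h \in L^p(\Ball^m(0, R_3))$. Applying Theorem~\ref{hajlasz-ptwise} componentwise to $Df$ then yields $Df \in W^{1,p}(\Ball^m(0, R_3); \R^{(n-m)\times m})$, i.e.\ $f \in W^{2,p}$, with $\|D^2 f\|_{L^p}$ controlled by a constant depending only on $n,m,p$ times $E^{1/p}$. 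The hard work is already contained in Corollary~\ref{cor:optimal}; the only genuine subtlety here is the compatibility of exponents, but any $p' \in (m,p)$ simultaneously satisfies $p' > m$ (needed to invoke \eqref{pre-ptwise}) and $p/p' > 1$ (needed for $L^{p/p'}$-boundedness of $M$), and such a $p'$ exists precisely because $p > m$. Tracking constants step by step then yields the quantitative claim $R_3 = a_3(n,m,p)\, E^{-1/(p-m)}$.
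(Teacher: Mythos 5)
Your proposal is correct and follows essentially the same route as the paper: fix an auxiliary exponent $p'\in(m,p)$ (the paper calls it $s$), apply Corollary~\ref{cor:optimal} at that lower exponent, convert the local integral to a ball average so that the $|z_1-z_2|^{1-m/p'}$ factor becomes $|z_1-z_2|$, dominate by the Hardy--Littlewood maximal function of $(\GCi\circ F)^{p'}$, invoke the strong maximal inequality (valid since $p/p'>1$) to get an $L^p$ bound on the resulting $g$, and then apply Haj\l{}asz's Theorem~\ref{hajlasz-ptwise}. The only cosmetic differences are your explicit choice $R_3=R_2/11$ to keep $U(z_1,z_2)$ inside the graph patch and your slightly more explicit bookkeeping of constants, both of which the paper leaves implicit.
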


\begin{proof}
  It remains to show that the graph parametrizations are in fact in
  $W^{2,p}$. To this end, we fix an exponent $s\in (m,p)$ and apply
  Corollary~\ref{cor:optimal} with $p$ replaced by $s$, to obtain from
  \eqref{pre-ptwise} the following estimate
  \begin{eqnarray*}
    \lefteqn{
      |D f(z_1)-D f(z_2)|}\\
    & \lesssim & \biggl(\int_{\Ball^m((z_1+z_2)/2,5|z_1-z_2|)} 
    \GCi\bigl((z,f(z))\bigr)^s\, dz\biggr)^{1/s} |z_1-z_2|^{1-m/s}\\
    & \lesssim & \biggl(\fint_{\Ball^m((z_1+z_2)/2,5|z_1-z_2|)} 
    \GCi\bigl((z,f(z))\bigr)^s\, dz\biggr)^{1/s} |z_1-z_2|\\
    & \lesssim & \bigl(G(z_1) + G(z_2)\bigr)
    |z_1-z_2|
  \end{eqnarray*}
  where
  \[
  G(z)=\Bigl(M \GCi\bigl(F(z)\bigr)^s\Bigr)^{1/s} \quad\Fo F(z)=(z,f(z))\, ,
  \]
  and $M h$ denotes the standard Hardy-Littlewood maximal function of
  $h$. Since $p>s$, we have $p/s> 1$, so that $(\GCi\circ F)^s$ is in
  $L^{p/s}$ and by the Hardy--Littlewood maximal theorem $\bigl.G\, \bigr.^s=M
  \bigl((\GCi\circ F)^s\bigr)\in L^{p/s}$.  Thus, $G\in L^p$. An application
  of Haj\l{}asz' Theorem~\ref{hajlasz-ptwise} concludes the proof of
  Theorem~\ref{W2p-estimates}.
\end{proof}

\section{From $W^{2,p}$ estimates to finiteness of both energies}

\label{sec:4}
\setnumbers

In this section, we prove the implications (1) $\Rightarrow$ (2), (3) of the
main result, Theorem \ref{mainthm}. Let us begin with a definition.

\begin{definition}
  \label{def:w2pmani}
  Let $\Sigma \subset \R^n$. We say that $\Sigma$ is an \emph{$m$-dimensional,
    $W^{2,p}$-manifold} (without boundary) if at each point $x \in \Sigma$ there
  exist an $m$-plane $T_x\Sigma \in G(n,m)$, a radius $R_x > 0$, and a function
  $f \in W^{2,p}( T_x\Sigma \cap \Ball^n(0,2R_{x}),\R^{n-m})$ such that
  \begin{displaymath}
    \Sigma \cap \Ball^n(x,R_x) = 
    x+\Big( \graph f \cap\Ball^n(0,R_x) \Big)\,.
  \end{displaymath}
\end{definition} 

We will use this definition only for $p>m$. In this range, by the Sobolev
imbedding theorem,  each~$W^{2,p}$-manifold is a manifold of class $C^1$.

\begin{theorem}
  \label{thm:w2p-gmc}
  Let $p > m$ and let $\Sigma$ be a compact, $m$-dimensional,
  $W^{2,p}$-manifold. Then the global curvature functions $\GMC[\Sigma]$ and
  $\GTP[\Sigma]$ are of class $L^p(\Sigma,\H^m)$.
\end{theorem}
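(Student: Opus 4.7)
The plan is to localize using the compactness of $\Sigma$, and then to dominate each global curvature pointwise by an $L^p$ function plus a harmless constant. By Definition~\ref{def:w2pmani} and compactness, $\Sigma$ admits a finite cover by balls $\Ball^n(x_j,R_j)$ inside which $\Sigma$ coincides with a translate by $x_j$ of the graph of a $W^{2,p}$ function $f_j\colon P_j\cap\Ball^n(0,2R_j)\to P_j^\perp$. Shrinking radii if necessary, I may assume $|Df_j|<\varepsilon_0(m)$ so that $\H^m$ on $\Sigma$ is comparable to Lebesgue measure on $P_j$ via the graph parametrization, and so that every tangent plane $T_y\Sigma$ for $y$ in a patch is uniformly close to $P_j$ in the Grassmannian. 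Set $R:=\tfrac12\min_j R_j$.

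The key local step is a pointwise beta-number estimate. Applying Theorem~\ref{hajlasz-ptwise} to $Df_j\in W^{1,p}$ produces a function $g_j\in L^p(P_j\cap\Ball^n(0,2R_j))$ (essentially a constant multiple of the Hardy--Littlewood maximal function of $|D^2 f_j|$, cf.\ the remark after Theorem~\ref{hajlasz-ptwise}) with
\begin{displaymath}
  |Df_j(\xi)-Df_j(\eta)|\;\le\;|\xi-\eta|\bigl(g_j(\xi)+g_j(\eta)\bigr).
\end{displaymath}
Integrating along a line segment yields the quadratic remainder estimate $|f_j(\xi)-f_j(\eta)-Df_j(\eta)(\xi-\eta)|\lesssim|\xi-\eta|^2(g_j(\xi)+g_j(\eta))$, which via the graph parametrization translates to
\begin{equation}\label{plan-beta}
  \beta_\Sigma(a,r)\;\lesssim\;G(a)\cdot r \qquad \textnormal{for } a\in\Sigma,\ r\in(0,R),
\end{equation}
where $G(a):=g_j(\pi_{P_j}(a-x_j))$ belongs to $L^p(\Sigma,\H^m)$ on each patch.

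I then split each supremum defining $\GMC(x)$ and $\GTP(x)$ into a near part (auxiliary points inside $\Ball^n(x,R)$) and a far part. The far part admits an elementary uniform bound: $\H^{m+1}(T)\lesssim\diam(T)^{m+1}$ gives $\DC\le C(m)/R$, and $|y-x|\ge R$ gives $1/\rtp(x,y;T_x\Sigma)\le 2\diam\Sigma/R^2$. For the near part, set $d:=\diam\{x,x_1,\dots,x_{m+1}\}\le R$; by \eqref{plan-beta} the whole simplex $T$ lies in a slab of thickness $\lesssim G(x)\,d^2$ around the affine plane $x+T_x\Sigma$, and a wedge--product argument (at least one of the $m{+}1$ edge vectors $x_i-x$ must contribute a transverse factor of length $\lesssim G(x)d^2$ to $(x_1-x)\wedge\cdots\wedge(x_{m+1}-x)$, while the others have norm $\le d$) yields $\H^{m+1}(T)\lesssim G(x)\,d^{m+2}$, hence $\DC(x,x_1,\dots,x_{m+1})\lesssim G(x)$. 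For $\GTP$, the bound $\dist(y,x+T_x\Sigma)\lesssim G(x)|y-x|^2$ supplied by \eqref{plan-beta} directly gives $1/\rtp(x,y;T_x\Sigma)\lesssim G(x)$.

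Combining both parts on every patch yields $\GMC(x)+\GTP(x)\lesssim G(x)+C(R,\diam\Sigma)$ at each $x\in\Sigma$, and integration of the $p$-th power over $\Sigma$ finishes the proof, using $G\in L^p$ and $\H^m(\Sigma)<\infty$. I expect the main obstacle to be the geometric translation of the beta bound \eqref{plan-beta} into the $\H^{m+1}$-volume estimate for arbitrary (possibly degenerate) simplex configurations, together with the careful check that the tangent plane $T_x\Sigma$ may be substituted for the best-approximating $H$ in the definition of $\beta_\Sigma$ at the cost of only a constant factor that depends solely on $m$ and $\varepsilon_0$.
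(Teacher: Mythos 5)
Your overall architecture — finite cover, shrink patches for flatness, bound $\beta_\Sigma(a,r)\lesssim G(a)r$ with $G\in L^p$, translate that into pointwise bounds for $\GMC$ and $\GTP$, and split into near/far — matches the paper's Section~4 quite faithfully, including the use of a partition of unity. Your wedge-product/slab argument for $\H^{m+1}(T)\lesssim\beta_\Sigma(x,d)\,d^{m+1}$ is a reasonable substitute for the paper's Lemma~\ref{lem:dc-beta} (which uses a Fubini/box-volume argument via the auxiliary cube $S\subset V^\perp$), and your treatment of $\GTP$ via $\dist(y,x+T_x\Sigma)\lesssim G(x)|y-x|^2$ is exactly what the paper's Lemma in Section~4.3 does.

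However, the derivation of the key beta estimate \eqref{plan-beta} has a genuine gap. You apply Theorem~\ref{hajlasz-ptwise} to $Df_j$ to get $|Df_j(\xi)-Df_j(\eta)|\le|\xi-\eta|(g_j(\xi)+g_j(\eta))$ and then claim that integrating along the segment $[\eta,\xi]$ yields the quadratic remainder bound $|f_j(\xi)-f_j(\eta)-Df_j(\eta)(\xi-\eta)|\lesssim|\xi-\eta|^2(g_j(\xi)+g_j(\eta))$. That inference fails for two reasons. First, the fundamental-theorem-of-calculus representation produces
\begin{displaymath}
  |R|\;\le\;|\xi-\eta|^2\Bigl(\int_0^1 t\,g_j\bigl(\eta+t(\xi-\eta)\bigr)\,dt+\tfrac12 g_j(\eta)\Bigr),
\end{displaymath}
and the line average $\int_0^1 g_j(\eta+t(\xi-\eta))\,dt$ of a Hardy--Littlewood maximal function is \emph{not} controlled by $g_j(\xi)+g_j(\eta)$ when $m\ge 2$; the maximal function dominates ball averages, not one-dimensional averages through a point. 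Second, the pointwise Haj\l{}asz inequality is an almost-everywhere statement in $\R^m\times\R^m$, so invoking it at all intermediate points of a fixed segment (a set of measure zero) is not justified without extra care. The paper avoids both problems: in Lemma~\ref{lem:w2p-beta-est} it applies the Morrey--Sobolev inequality \emph{twice} — once to $\Psi_x=F-DF(x)(\cdot-x)$ and once to $DF$ — which produces a \emph{volume} average $\bigl(\fint_{\Ball^m(\frac{x+y}{2},|x-y|)}|D^2f|^s\bigr)^{1/s}$ for some $s\in(m,p)$, and only then passes to the maximal function $M(|D^2 f|^s)^{1/s}\in L^p$. Replacing your line-integration step with this two-step Morrey argument (or, equivalently, writing the remainder through $D^2 f$ and integrating over a ball or cone) repairs the proof; Haj\l{}asz's theorem is only needed in the converse direction of Theorem~\ref{mainthm}, not here.
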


\begin{remark}
  As already explained in the introduction, here we assume that $\GTP$ is
  defined for the natural choice of $m$-planes $H_x=T_x\Sigma$.  As we
  mentioned before, if $\Sigma$ is a $C^1$ manifold and $H_x \ne T_x\Sigma$ on a
  set of positive $\H^m$-measure, then the global curvature $\GTP$ defined for
  $H_x$ instead of $T_x\Sigma$ has infinite $L^p$-norm.
\end{remark}

\subsection{Beta numbers for $W^{2,p}$ graphs}

We start the proof with a general lemma that shall be applied later to obtain specific estimates for $\GMC$ and $\GTP$
in $L^p(\Sigma)$.

\begin{lemma}
  \label{lem:w2p-beta-est}
  Let $f \in W^{2,p}(\Ball^m(0,2R),\R^{n-m})$, where $p > m$ and let $\Sigma =
  \graph f $. Then there exists a function $g \in L^p(\Sigma
  \cap\Ball^n((0,f(0),2R),\H^m)$ such that
  for each $a \in \Sigma \cap \Ball^n((0,f(0)),R)$
  and any $r < R$
  \begin{displaymath}
    \beta_{\Sigma}(a,r) \le g(a) r \,.
  \end{displaymath}
\end{lemma}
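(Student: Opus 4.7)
The plan is to take, at each $a = F(x) := (x, f(x)) \in \Sigma \cap \Ball^n((0, f(0)), R)$, the tangent subspace $H_a := \graph(Df(x)) \in G(n,m)$ as the candidate plane witnessing $\beta_\Sigma(a, r)$. Any $z = F(y) \in \Sigma \cap \Ball^n(a, r)$ then satisfies $|y - x| \le |z - a| \le r$, and testing the point $(y,\, f(x) + Df(x)(y-x)) \in a + H_a$ yields
\[
\dist(z, a + H_a) \le |f(y) - f(x) - Df(x)(y - x)|,
\]
so the task reduces to a quadratic-in-$r$ upper bound on this second-order Taylor remainder of $f$ at $x$, uniform in $y \in \Ball^m(x, r)$, by a function depending only on $x$.

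To produce such a bound I would apply Morrey's inequality twice. After extending $f$ to $\tilde f \in W^{2,p}(\R^m, \R^{n-m})$ by a standard Sobolev extension (which is needed because $\Ball^m(x, 2r)$ may fail to sit inside $\Ball^m(0, 2R)$ once $|x|$ and $r$ are both close to $R$), I would fix an auxiliary exponent $q$ with $m < q < p$. Morrey's embedding applied to $D\tilde f \in W^{1,q}(\R^m)$ gives, for $|y - x| \le r$,
\[
|Df(y) - Df(x)| \le C(m,q) \, |y - x|^{1 - m/q} \, \|D^2 \tilde f\|_{L^q(\Ball^m(x, 2r))},
\]
and, since $p > m$ makes $Df$ continuous so that $f(y) - f(x) - Df(x)(y - x) = \int_0^1 [Df(x + t(y - x)) - Df(x)](y - x) \, dt$, one integration in $t$ yields
\[
|f(y) - f(x) - Df(x)(y - x)| \le C \, r^{2 - m/q} \, \|D^2 \tilde f\|_{L^q(\Ball^m(x, 2r))},
\]
hence
\[
\beta_\Sigma(a, r) \le C \cdot r \cdot \left( r^{-m} \int_{\Ball^m(x, 2r)} |D^2 \tilde f|^q \right)^{1/q}.
\]

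Setting $g_0(x) := C \bigl( M(|D^2 \tilde f|^q)(x) \bigr)^{1/q}$, with $M$ the Hardy--Littlewood maximal operator on $\R^m$, converts this into $\beta_\Sigma(F(x), r) \le g_0(x) \, r$ uniformly in $r < R$. Since $p/q > 1$, the Hardy--Littlewood maximal theorem makes $M$ bounded on $L^{p/q}(\R^m)$, so $g_0 \in L^p(\R^m)$ with norm controlled by $\|D^2 f\|_{L^p(\Ball^m(0, 2R))}$; pulling $g_0$ back by the bi-Lipschitz graph parametrization $F$ produces $g := g_0 \circ F^{-1} \in L^p(\Sigma \cap \Ball^n((0, f(0)), R), \H^m)$, as required. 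The one subtle point -- the ``main obstacle'' -- is that $q$ must be chosen strictly between $m$ and $p$: running Morrey at the critical exponent $p$ itself would force $g_0^p$ to be $M(|D^2 f|^p)$, which is only of weak-$L^1$ type and generically not integrable, so the buffer $p/q > 1$ is exactly what allows the Hardy--Littlewood theorem to close the argument.
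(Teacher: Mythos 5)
Your proposal is correct and follows essentially the same path as the paper: test against the plane $a + T_a\Sigma$, control the second-order Taylor remainder of $f$ by Morrey at an auxiliary exponent $q$ (the paper calls it $s$) chosen strictly between $m$ and $p$, pass to the Hardy--Littlewood maximal function $M\bigl(|D^2f|^q\bigr)^{1/q}$, and close with the maximal theorem, made applicable precisely because $p/q>1$. The paper applies the Morrey estimate directly to the affine remainder $\Psi_x(z) = F(z) - DF(x)(z-x)$ rather than integrating the H\"{o}lder bound for $Df$ along the segment as you do, and it does not invoke a Sobolev extension explicitly (working instead with a ball $U = \Ball^m(\tfrac{x+y}{2},|x-y|)$ well inside the graph patch), but these are only cosmetic variants of the same estimate.
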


\begin{proof}
  Fix $s \in (m,p)$.  Then, $f \in W^{2,s}(\Ball^m(0,2R))$. Since $s > m$ we
  have the embedding $$W^{2,s}(\Ball^m(0,2R)) \subset
  C^{1,\alpha}(\overline{\Ball^m(0,2R)}),$$ where $\alpha = 1 - \frac
  ms$. Choose some point $x \in \Ball^m(0,R)$ and set as before
  \begin{displaymath}
    F(z):=(z,f(z)) 
    \quad \text{and} \quad
    \Psi_x(z):=F(z)-DF(x)(z-x)\quad\Fo z\in \Ball^m(0,2R).
  \end{displaymath}
  
  Of course $\Psi_x$ is in $W^{2,p}(\Ball^m(0,2R),\R^n)$ and
  therefore also in $W^{2,s}(\Ball^m(0,2R),\R^n)$. We now 
  fix another point $y$ in $\Ball^m(x,R)$ and estimate the 
  oscillation of $\Psi_x$. Set
  \[
  U:=\Ball^m\Bigl(\frac{x+y}2,|x-y|\Bigr)
  \]
  By two consecutive applications of the Sobolev imbedding
  theorem in the supercritical case (cf.
  \cite[Theorem~7.17]{gilb-trud}),
  keeping in mind that $U$ is a ball of radius $|x-y|$, we obtain
  \begin{align*}
    |\Psi_x(y) - \Psi_x(x)| 
    &\le C(n,m,s) |y-x|^{1 - \frac ms} \left( \int_{U} |D\Psi_x(z)|^s\ dz \right)^{1/s} \\
    &= C' |y-x| \left( \fint_{U} |D\Psi_x(z)|^s\ dz \right)^{1/s} \\
    &= C' |y-x| \left( \fint_{U} |D F(z) - D F(x)|^s\ dz 
    \right)^{1/s} \\
    &\le \tilde{C} |y-x| \left(
      \fint_{U}
      |z-x|^{s-m} \int_{U} |D^2F(w)|^s\ dw
      \ dz
    \right)^{1/s} \\
    &= \bar{C} |y-x|^2 \left( \fint_{\Ball^m(\frac{x+y}2,|x-y|)} |D^2f(w)|^s\ dw \right)^{1/s}\\ 
    &\le \hat{C} |y-x|^2 M
    (|D^2f|^s)^{1/s}(x) \,.
  \end{align*}
  Here $M$ denotes the Hardy-Littlewood maximal function and the constant
  $\hat{C} = \hat{C}(n,m,s)$ depends on $n,m,$ and $s$.  Since $m < s < p$ we
  have $\frac ps > 1$ and $|D^2f|^s \in L^{p/s}(\Ball^m(0,2R))$. Hence we also
  have $M(|D^2f|^s) \in L^{p/s}(\Ball^m(0,2R))$. Therefore $M(|D^2f|^s)^{1/s}
  \in L^p(\Ball^m(0,2R))$.

  To estimate the $\beta$ number, note that
  \[
  |\Psi_x(y) - \Psi_x(x)| = |F(y) - F(x) - DF(x)(y-x)| = 
  |f(y) - f(x) - Df(x)(y-x)| \,.
  \]
  Choose two points $a \in \Sigma \cap \Ball^n(F(0),R)$ and $b \in \Sigma \cap
  \Ball^n(F(0),2R)$. Since $\Sigma = \graph f $ there exist $x,y \in
  \Ball^m(0,2R)$ such that $F(x) = a$ and $F(y) = b$.

  Of course we have $|y-x| \le |b-a|$. Now we obtain
  \begin{align*}
    \dist(b, a + T_a\Sigma) &= \dist(F(y), F(x) + T_{F(x)}\Sigma) \\
    &\le |F(y) - F(x) - DF(x)(y-x)| \\
    &= |\Psi_x(y) - \Psi_x(x)|\\
    &\le \hat{C} |y-x|^2 M(|D^2f|^s)^{1/s}(x) \\
    &\le \hat{C} |b-a|^2 M(|D^2f|^s)^{1/s}(\pi_{\R^m}
    (a)) \,.
  \end{align*}
  Since $\pi_{\R^m}$ is bounded we find together with the previous
  considerations that the function $g(a) := \hat{C}
  M(|D^2f|^s)^{1/s}(\pi_{\R^m}(a))$ is of class
  $L^p(\Sigma\cap\Ball^n(F(0),2R),\H^m)$. Choose a radius $r \in (0,R]$. We have
  \begin{displaymath}
    \sup_{b \in \Sigma \cap \Ball^n(a,r)} \dist(b, a + T_a\Sigma)
    \le \sup_{b \in \Sigma \cap \Ball^n(a,r)} |b-a|^2 g(a)
    \le r^2g(a)\,.
  \end{displaymath}
  Hence
$$
    \beta_\Sigma(a,r) = \frac 1r \inf_{H \in G(n,m)} \bigg( 
    \sup_{b \in \Sigma \cap \Ball^n(a,r)} \dist(b, a + H) \bigg)
    \le \frac 1r \sup_{b \in \Sigma \cap \Ball^n(a,r)} 
    \dist(b, a + T_a\Sigma)
    \le g(a) r \,.
    $$
\end{proof}

We now need to estimate the global curvatures in terms of $\beta$
numbers. Combining these estimates with the previous lemma, we will later be
able to conclude the proof of Theorem~\ref{thm:w2p-gmc}.

\subsection{Global Menger curvature for $W^{2,p}$ graphs}

Let us begin with an estimate for the global Menger curvature $\GMC$.

\begin{lemma}
  \label{lem:dc-beta}
  Let $\Sigma \subset \R^n$ be a closed $m$-dimensional set. Choose $m+2$ points
  $x_0$,\ldots,$x_{m+1}$ of $\Sigma$; set $T = \conv(x_0,\ldots,x_{m+1})$ and $d
  = \diam(T)$. There exists a constant $C = C(m,n)$ such that
  \begin{displaymath}
    \H^{m+1}(T) \le C \beta_\Sigma(x_0,d) d^{m+1} 
  \end{displaymath}
  and
  \begin{displaymath}
    \DC(x_0,\ldots,x_{m+1}) \le C \frac{\beta_\Sigma(x_0,d)}{d} \,.
  \end{displaymath}
\end{lemma}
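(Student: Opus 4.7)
The plan is to observe first that the two inequalities are essentially the same: since $d=\diam(T)$, one has
\[
\DC(x_0,\ldots,x_{m+1}) = \frac{\H^{m+1}(T)}{d^{m+2}},
\]
so any bound of the form $\H^{m+1}(T)\le C\,\beta_\Sigma(x_0,d)\,d^{m+1}$ immediately yields the stated bound on $\DC$. Thus I would focus on the volume estimate, and express the $(m+1)$-volume of the simplex via the wedge product,
\[
\H^{m+1}(T) = \frac{1}{(m+1)!}\,\bigl|v_1\wedge v_2\wedge\cdots\wedge v_{m+1}\bigr|,
\qquad v_i:=x_i-x_0,
\]
which is the standard formula for an $(m+1)$-simplex in $\R^n$.

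Next I would pick $H\in G(n,m)$ realizing (or approximating to within $\varepsilon$) the infimum in the definition of $\beta_\Sigma(x_0,d)$. Since $|x_i-x_0|\le\diam(T)=d$ for every $i$, a short closure/limit argument (letting $r=d(1+\varepsilon)$ and then $\varepsilon\to 0$) shows
\[
\dist(x_i, x_0+H)\le \beta_\Sigma(x_0,d)\,d =: \beta d
\qquad\text{for } i=1,\ldots,m+1.
\]
Decomposing $v_i = a_i+b_i$ with $a_i:=\pi_H(v_i)\in H$ and $b_i:=\pi_{H^\perp}(v_i)\in H^\perp$, one obtains the uniform bounds $|a_i|\le d$ and $|b_i|\le\beta d$. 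Note also that $\beta\le 1$ holds automatically from the very definition of beta numbers, so factors $\beta^k$ with $k\ge 1$ can be bounded by $\beta$.

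Now I would expand the multilinear wedge product
\[
v_1\wedge\cdots\wedge v_{m+1}
= \bigwedge_{i=1}^{m+1}(a_i+b_i)
= \sum_{S\subseteq\{1,\ldots,m+1\}} \omega_S,
\]
where $\omega_S$ is the (signed) wedge product obtained by choosing $b_i$ for $i\in S$ and $a_i$ otherwise. The crucial observation is that $\omega_\emptyset = a_1\wedge\cdots\wedge a_{m+1}=0$, because all $a_i$ lie in the $m$-dimensional subspace $H$, so $m+1$ of them are linearly dependent. For $|S|\ge 1$, the submultiplicativity of the wedge norm yields
\[
|\omega_S| \le \prod_{i\in S}|b_i|\prod_{i\notin S}|a_i|
\le (\beta d)^{|S|}d^{m+1-|S|}\le \beta\,d^{m+1},
\]
using $\beta\le 1$. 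Summing over the $2^{m+1}-1$ nonempty subsets yields
\[
|v_1\wedge\cdots\wedge v_{m+1}|\le (2^{m+1}-1)\,\beta\,d^{m+1},
\]
and dividing by $(m+1)!$ finishes the first inequality with $C=C(m)$; the second inequality then follows from the initial reduction.

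The only subtle point I foresee is the mildly technical mismatch between the open ball in the definition of $\beta_\Sigma$ and the closed ball in which the vertices lie (with possible equality on the boundary). This is easily handled by approximation, but needs to be mentioned for precision. Everything else is a clean piece of multilinear algebra, and the resulting constant depends only on $m$ (a fortiori on $m$ and $n$ as required), independently of the codimension.
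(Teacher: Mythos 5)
Your proof is correct, and it follows a genuinely different route from the paper. The paper avoids multilinear algebra entirely: it places $x_0$ at the origin, chooses the optimal $H$ so that $T$ lies in the thin box $[-d,d]^m\times[-h,h]^{n-m}$ with $h=\beta d$, thickens $T$ to $T\times S$ with $S$ a cube of side $2h$ in $V^\perp$ (where $V=\aff T$), and then compares $\H^n(T\times S)=\H^{m+1}(T)\,\H^{n-m-1}(S)$ with the volume of a slightly enlarged box containing $P\times S$. Your argument instead expands $v_1\wedge\cdots\wedge v_{m+1}$ after splitting $v_i=\pi_H v_i+\pi_{H^\perp}v_i$, kills the leading term because $m+1$ vectors in the $m$-plane $H$ are linearly dependent, and bounds the $2^{m+1}-1$ remaining terms by Hadamard and $\beta\le1$. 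Both proofs are sound; yours is shorter, avoids the ambient-dimension trick, and produces the explicit constant $(2^{m+1}-1)/(m+1)!$ depending only on $m$, whereas the paper's constant $(2+4\sqrt{n-m-1})^n\,2^{-(n-m-1)}$ does depend on $n$. The subtlety you flag (open versus closed ball, since vertices may lie on $\partial\Ball(x_0,d)$) is also silently present in the paper's proof and can be cleaned up by the same limiting argument you describe, so it does not distinguish the two arguments.
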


\begin{proof}
  If the affine space $\aff\{x_0, \ldots, x_{m+1}\}$ is not $(m+1)$-dimensional
  then $\H^{m+1}(T) = 0$ and there is nothing to prove. Hence, we can assume
  that $T$ is an $(m+1)$-dimensional simplex. The measure $\H^{m+1}(T)$ can be
  expressed by the formula (cf. \eqref{volume_simplex})
  \begin{displaymath}
    \H^{m+1}(T) 
    = \frac{1}{m+1} \dist(x_{m+1}, \aff\{x_0,\ldots,x_m\}) \H^m(\conv(x_0,\ldots,x_m)) \,.
  \end{displaymath}
  In the same way, one can express the measure $\H^m(\conv(x_0,\ldots,x_m))$
  etc.; by induction,
  \begin{displaymath}
    \H^{m+1}(T) \le \frac 1{(m+1)!} d^{m+1} \,.
  \end{displaymath}
  Hence, if $\beta_\Sigma(x_0,d) = 1$, then there is nothing to prove, so we can
  assume that $\beta_\Sigma(x_0,d) < 1$.

  Fix an $m$-plane $H \in G(n,m)$ such that
  \begin{equation}
    \label{eq:points-dist}
    \dist(y, x_0 + H) \le d \beta_\Sigma(x_0,d) 
    \qquad\text{for all } y \in \Sigma \cap \Ball^n(x_0,d)\,.
  \end{equation}
  Set $h := d \beta_\Sigma(x_0,d) < d$. Without loss of generality we can assume
  that $x_0$ lies at the origin. Let us choose an orthonormal basis
  $(v_1,\ldots,v_n)$ of $\R^n$ as coordinate system, such that
  $\Span\{v_1,\ldots,v_m\} = H$. Because of~\eqref{eq:points-dist} in our
  coordinate system we have
  \begin{displaymath}
    T \subset [-d, d]^m \times [-h,h]^{n-m}\,.
  \end{displaymath}
  Of course, $T$ lies in some $(m+1)$-dimensional section of the above
  product. Let
  \begin{align*}
    V &:= \aff \{ x_0, \ldots, x_{m+1} \} =  
    \lin \{ x_1, \ldots, x_{m+1} \} \,,\\
    Q(a,b) &:= [-a,a]^m \times [-b,b]^{n-m} \,,\\
    Q &:= Q(d,h) \\
    \text{and} \quad
    P &:= V \cap Q \,.
  \end{align*}
  Note that each of the sets $V$, $Q$ and $P$ contains $T$. Choose another
  orthonormal basis $w_1$, \ldots, $w_n$ of $\R^n$ such that $V = \lin\{w_1,
  \ldots, w_{m+1}\}$. Set
  \[
  S := \{ x \in V^{\perp} : |\langle x, w_i \rangle |\le h
  \quad\Fo i=1,\ldots,m
   \}\, .
  \]
  Thus, $S$ is just the cube $[-h,h]^{n-m-1}$ placed in the orthogonal
  complement of $V$. Note that $\diam S = 2 h \sqrt{n-m-1}$. In this setting we
  have
  \begin{equation}
    \label{eq:PxS}
    P \times S  =
    \subset Q(d + 2h \sqrt{n-m-1}, h + 2h\sqrt{n-m-1}) \,.
  \end{equation}
  Recall that $h = d \beta_\Sigma(x_0,d) < d$. We estimate
  \begin{align*}
    \H^n(T \times S) 
    &\le \H^n(P \times S) \\[4pt]
    &\le \H^n\bigl(Q(d + 2h \sqrt{n-m-1}, h + 2 h \sqrt{n-m-1}) \bigr)  \\[4pt]
    &= \bigl(2 d + 4 h \sqrt{n-m-1}\bigr)^m \bigl(2 h + 4 h \sqrt{n-m-1}\bigr)^{n-m}  \\[4pt]
    &< (2 d + 4 d  \sqrt{n-m-1})^m (2 h + 4 h \sqrt{n-m-1})^{n-m}  \\[4pt]
    &= (2 + 4 \sqrt{n-m-1})^n d^n \beta_\Sigma(x_0,d)^{n-m}  \,.
  \end{align*}
  On the other hand we have
  \begin{align*}
    \H^n(T \times S) &= \H^{m+1}(T) \H^{n-m-1}(S) \\
    &= \H^{m+1}(T) 2^{n-m-1} h^{n-m-1} \\
    &= 2^{n-m-1} \H^{m+1}(T) d^{n-m-1} 
    \beta_\Sigma(x_0,d)^{n-m-1} \,.
  \end{align*}
  Hence
  \[
  2^{n-m-1} \H^{m+1}(T) d^{n-m-1} 
  \beta_\Sigma(x_0,d)^{n-m-1}
  \le (2 + 4 \sqrt{n-m-1})^n d^n \beta_\Sigma(x_0,d)^{n-m},
  \]
  or equivalently
  \[
  \H^{m+1}(T) \le (2 + 4 \sqrt{n-m-1})^n 2^{-(n-m-1)} d^{m+1} \beta_\Sigma(x_0,d) \,.
  \]
  We may set $C = C(n,m) = (2 + 4 \sqrt{n-m-1})^n 2^{-(n-m-1)}$. This completes
  the proof of the lemma.
\end{proof}

Since $\Sigma $ is a compact $W^{2,p}$-manifold ($p>m$) we may cover it by
finitely many balls, in which $\Sigma$ is described as a graph, such that Lemma
\ref{lem:w2p-beta-est} is satisfied in each of these graph patches with a
respective function $g$ defined only on that patch.  More precisely, we find
$a_1,\ldots,a_N\in\Sigma$ with
$$
\Sigma\subset\bigcup_{k=1}^N\Ball^n(a_k,R/2),
$$
such that for each $k=1,\ldots,N,$ one has
$$
\Sigma\cap\Ball^n(a_k,2R)=a_k+\left(\graph f_k\cap\Ball^n(
  a_k,2R)\right),
$$
where $f_k\in W^{2,p}(\Ball^m(0,2R),\R^{n-m})$, and there is a function $g_k\in
L^p(\Sigma\cap\Ball^n(a_k,2R), \H^m)$ with the property that for each $a\in
\Sigma\cap\Ball^n(a_k,R)$ and any $r<R$ one has the estimate
\begin{equation}\label{beta-on-patch}
  \beta_\Sigma(a,r)\le g_k(a)r.
\end{equation}
Using a partition of unity subordinate to this finite covering, i.e.,
$(\eta_k)_{k=1}^N\subset C_0^\infty( \Ball^n(a_k,R/2))$ with $0\le\eta_k\le 1$,
$\sum_{k=1}^N \eta=1$, we can extend the functions $\eta_kg_k$ to all of
$\Sigma$ by the value zero outside of $\Ball^n(a_k,R/2)$ for each
$k=1,\ldots,N,$ and define finally $g\in L^p(\Sigma,\H^m)$ as
$$
g=\sum_{k=1}^N\eta_kg_k.
$$
Now, for any $x_0\in\Sigma$ there exists $k\in\{1,\ldots,N\}$ such that $x_0\in
\Sigma\cap\Ball^n(a_k,R/2)$, so that $\Ball^n(x_0,R/2)\subset\Ball^n(a_k,R)$,
and we conclude with \eqref{beta-on-patch} for any $r<R$
$$
\beta_\Sigma(x_0,r)=\sum_{k=1}^N\eta_k\beta_\Sigma(x_0,r)\le
\sum_{k=1}^N\eta_kg_k(x_0)r=g(x_0)r.
$$
Consequently, by Lemma \ref{lem:dc-beta},
\begin{align*}
  \GMC(x_0) &= \sup_{x_1,\ldots, x_{m+1} \in \Sigma} 
  \DC(x_0,x_1,\ldots, x_{m+1}) \\
  &\le C \sup_{x_1,\ldots, x_{m+1} \in \Sigma} 
  \frac{\beta_\Sigma(x_0,\diam(x_0,\ldots,x_{m+1}))}{\diam(x_0,
    \ldots,x_{m+1})} \\
  &\le C \sup_{x_1,\ldots, x_{m+1} \in \Sigma} g(x_0) = 
  C g(x_0) \,.
\end{align*}
This leads to the following result.
\begin{corollary}
  Let $\Sigma$ be a compact, $m$-dimensional, $W^{2,p}$-manifold for some $p >
  m$. Then $\GMC[\Sigma]\in L^p(\Sigma,\H^m).$
\end{corollary}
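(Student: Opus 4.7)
My plan is to reduce the $L^p$-integrability of $\GMC$ to the $\beta$-number decay for $W^{2,p}$-graphs established in Lemma~\ref{lem:w2p-beta-est}, and then combine it with the geometric comparison $\DC\lesssim \beta/d$ from Lemma~\ref{lem:dc-beta}. Since $\Sigma$ is compact, I first cover it by finitely many balls $\Ball^n(a_k,R/2)$, $k=1,\ldots,N$, in which $\Sigma$ coincides with a translated graph of a $W^{2,p}$ function $f_k\colon \Ball^m(0,2R)\to\R^{n-m}$ (this is possible by Definition~\ref{def:w2pmani} and compactness). Lemma~\ref{lem:w2p-beta-est} then produces, on each patch, a function $g_k\in L^p(\Sigma\cap\Ball^n(a_k,2R),\H^m)$ such that $\beta_\Sigma(a,r)\le g_k(a)\,r$ whenever $a\in\Sigma\cap\Ball^n(a_k,R)$ and $r<R$.

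Next, I would stitch these local estimates into a global one via a partition of unity $(\eta_k)$ subordinate to the chosen cover. Setting $g:=\sum_k \eta_k g_k\in L^p(\Sigma,\H^m)$, every $x\in\Sigma$ lies in some $\Ball^n(a_k,R/2)$, so that $\Ball^n(x,R/2)\subset\Ball^n(a_k,R)$ and
\[
\beta_\Sigma(x,r)=\sum_k \eta_k(x)\,\beta_\Sigma(x,r)\le g(x)\,r
\qquad\text{for every } r<R/2.
\]
This global pointwise decay of the beta numbers is the key estimate driving the rest of the argument.

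With this in hand, I control $\GMC(x_0)$ via Lemma~\ref{lem:dc-beta}. For any $(m+2)$-tuple $(x_0,\ldots,x_{m+1})\in\Sigma^{m+2}$ of diameter $d$, the lemma yields $\DC(x_0,\ldots,x_{m+1})\le C\beta_\Sigma(x_0,d)/d$. When $d<R/2$ this gives $\DC\le C g(x_0)$; for tuples with $d\ge R/2$, the elementary volume estimate $\H^{m+1}(\conv(x_0,\ldots,x_{m+1}))\le d^{m+1}/(m+1)!$, obtained by iterating the base--height formula~\eqref{volume_simplex}, forces $\DC\le 2/((m+1)!\,R)$, which is a universal constant depending only on $m$ and $R$. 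Taking the supremum over all vertex configurations on $\Sigma$ yields the pointwise bound
\[
\GMC(x_0)\le C_1\, g(x_0) + C_2,
\]
with $C_1=C_1(n,m)$ and $C_2=C_2(m,R)$. Since $g\in L^p(\Sigma,\H^m)$ and $\H^m(\Sigma)<\infty$ by compactness, I conclude that $\GMC\in L^p(\Sigma,\H^m)$.

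The main substantive difficulty has already been packaged into Lemma~\ref{lem:w2p-beta-est}, whose proof hinges on two applications of the supercritical Sobolev embedding and control via the Hardy--Littlewood maximal function of $|D^2f|^s$ for some $s\in(m,p)$. Conditional on that lemma, the only delicate point is to avoid spurious trouble from ``large'' simplices whose diameter is comparable to $\diam\Sigma$; this is handled by the trivial volume bound above, and the remainder of the argument is a routine patching procedure.
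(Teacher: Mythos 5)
Your argument is correct and follows essentially the same route as the paper: cover $\Sigma$ by finitely many graph patches, apply Lemma~\ref{lem:w2p-beta-est} on each patch to get local functions $g_k$, glue them with a partition of unity into a single $g\in L^p(\Sigma,\H^m)$ with $\beta_\Sigma(x,r)\le g(x)\,r$ for small $r$, and then bound $\GMC$ pointwise via Lemma~\ref{lem:dc-beta}.

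You are, however, slightly more careful than the paper at one point, and this is worth noting. The bound $\beta_\Sigma(x_0,r)/r\le g(x_0)$ coming from Lemma~\ref{lem:w2p-beta-est} only holds for $r<R$, whereas a tuple $(x_0,\ldots,x_{m+1})\subset\Sigma$ can have diameter $d$ up to $\diam\Sigma$, which may greatly exceed $R$. The paper's displayed chain
\[
\GMC(x_0)\le C\sup_{x_1,\ldots,x_{m+1}\in\Sigma}\frac{\beta_\Sigma(x_0,d)}{d}\le C\,g(x_0)
\]
silently applies the beta-number bound in the regime $d\ge R$, where it has not been proved. Your explicit split into $d<R/2$ (where Lemma~\ref{lem:dc-beta} plus the partition-of-unity estimate gives $\DC\le Cg(x_0)$) and $d\ge R/2$ (where the trivial volume bound $\H^{m+1}(T)\le d^{m+1}/(m+1)!$ from the proof of Lemma~\ref{lem:dc-beta} gives $\DC\le 2/((m+1)!\,R)$) closes this gap, leading to the pointwise bound $\GMC(x_0)\le C_1\,g(x_0)+C_2$ and hence $\GMC\in L^p(\Sigma,\H^m)$, using finiteness of $\H^m(\Sigma)$ for the constant term. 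This is a clean repair, and the structure of the argument is otherwise identical to the paper's.
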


\subsection{Global tangent--point curvature for $W^{2,p}$ graphs}

The following simple lemma can be easily obtained from the definition of $\GTP$.

\begin{lemma}
  Assume that $\Sigma$ is a $C^1$ embedded, compact $m$-dimensional manifold
  without boundary. Then, for some $R=R(\Sigma)>0$ we have
  \[
  \GTP(x)\lesssim \frac 1R + \sup_{r<R} \frac{
    \beta_\Sigma(x,r)}{r} .
  \]
\end{lemma}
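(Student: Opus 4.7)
The plan is to split the defining supremum of $\GTP(x)$ according to whether $|y-x|$ exceeds a fixed scale $R=R(\Sigma)$ or not, and to handle each range separately.

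First, since $\Sigma$ is a compact $C^1$ manifold without boundary, the tangent map $x\mapsto T_x\Sigma$ is uniformly continuous, so one can choose $R>0$ such that for every $x\in\Sigma$ the intersection $\Sigma\cap\Ball^n(x,R)$ coincides with $x+\graph f_x$, where $f_x:\Ball^m(0,2R)\cap T_x\Sigma\to (T_x\Sigma)^\perp$ satisfies $f_x(0)=0$, $Df_x(0)=0$, and $\|Df_x\|_\infty$ is as small as one wishes (say at most $1/10$). This is precisely the normalized graph setup used repeatedly throughout the paper. In the \emph{far regime} $|y-x|\ge R$ the trivial bound $\dist(y,x+T_x\Sigma)\le|y-x|$ produces
\[
\frac{1}{\rtp(x,y;T_x\Sigma)} = \frac{2\dist(y,x+T_x\Sigma)}{|y-x|^2}\le\frac{2}{|y-x|}\le\frac{2}{R},
\]
which contributes the term $1/R$ on the right-hand side.

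In the \emph{near regime} $r:=|y-x|<R$, fix an almost-optimal $H^*(r)\in G(n,m)$ in the definition of $\beta_\Sigma(x,r)$, so that $\dist(z,x+H^*(r))\le\beta_\Sigma(x,r)\,r$ for all $z\in\Sigma\cap\Ball^n(x,r)$. A triangle inequality applied to the projections $Q_{T_x\Sigma}(y-x)$ and $Q_{H^*(r)}(y-x)$ yields
\[
\dist(y,x+T_x\Sigma)\le\dist(y,x+H^*(r))+\dgras(T_x\Sigma,H^*(r))\,|y-x|\le\bigl(\beta_\Sigma(x,r)+\dgras(T_x\Sigma,H^*(r))\bigr)\,r,
\]
and the whole game reduces to showing $\dgras(T_x\Sigma,H^*(r))\lesssim\beta_\Sigma(x,r)$. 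To do this I would pick an orthonormal basis $e_1,\ldots,e_m$ of $T_x\Sigma$ and test $H^*(r)$ on the $m$ graph points $y_i:=x+\tfrac{r}{2}e_i+f_x(\tfrac{r}{2}e_i)\in\Sigma\cap\Ball^n(x,r)$: since $Df_x(0)=0$ and $\|Df_x\|_\infty$ is small, each $y_i-x$ is very close to $\tfrac{r}{2}e_i\in T_x\Sigma$, while $\dist(y_i,x+H^*(r))\le\beta_\Sigma(x,r)\,r$. Projecting $y_i-x$ onto $(H^*(r))^\perp$ and invoking Proposition~\ref{prop:dist-ang} delivers the required angular estimate.

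The main technical obstacle is that the smallness of $Df_x$ is only a qualitative $C^1$ fact, so the naive comparison above leaves a residual term proportional to $\|Df_x\|_{L^\infty(\Ball^m(0,r))}$, which does not \emph{a priori} decay like $\beta_\Sigma(x,r)$. To deal with it I would either shrink $R$ at the outset so that this residual is already dominated by $\sup_{s<R}\beta_\Sigma(x,s)/s$ (using $Df_x(0)=0$ together with uniform $C^1$ continuity), or absorb it into the implicit constant of $\lesssim$, which is allowed to depend on $\Sigma$ through its uniform $C^1$ modulus. Combining the two regimes then produces
\[
\GTP(x)\lesssim\frac{1}{R}+\sup_{r<R}\frac{\beta_\Sigma(x,r)}{r},
\]
which is the claim.
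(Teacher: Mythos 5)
Your far-regime estimate and the reduction of the near regime to an angular comparison between $T_x\Sigma$ and the optimal $\beta$-plane $H^*(r)$ follow the same broad strategy as the paper, but the pivotal inequality $\dgras(T_x\Sigma,H^*(r))\lesssim\beta_\Sigma(x,r)$ is false for general $C^1$ manifolds, and this sinks the argument. The angle between the fixed plane $T_x\Sigma$ and the best-fit plane at scale $r$ is a \emph{cumulative} quantity (it records how much the tangent plane has drifted over all scales between $0$ and $r$), whereas $\beta_\Sigma(x,r)$ is a \emph{one-scale} quantity. Concretely, let $f:\R\to\R$ be $C^1$ with $f(0)=0$, $f'(0)=0$ and $f'(\xi)=a$ for $|\xi|\ge\delta$ (with $\delta$ tiny). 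At a scale $r\gg\delta$ the optimal line has slope $\approx a$, so $\dgras(T_x\Sigma,H^*(r))\approx a$, while the best-fit line approximates $\Sigma$ to within $\approx a\delta$, so $\beta_\Sigma(x,r)\approx a\delta/r\ll a$. Consequently $\dist(y,x+T_x\Sigma)/|y-x|^2\approx a/r$ at that scale, while $\beta_\Sigma(x,r)/r\approx a\delta/r^2$, and the ratio $\approx r/\delta$ is unbounded. Your test-point computation in step~3 runs into exactly this: $|y_i-x-\tfrac r2 e_i|=|f_x(\tfrac r2 e_i)|$ is bounded by $\|Df_x\|_{L^\infty(\Ball(0,r))}\cdot r/2$, which has no \emph{a priori} relation to $\beta_\Sigma(x,r)\,r$.

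Of the two fixes you offer, fix~(b) is untenable: the error term $\|Df_x\|_{L^\infty(\Ball(0,r))}/r$ tends to $+\infty$ as $r\to 0$ whenever $Df_x\not\equiv 0$, so no $\Sigma$-dependent constant can absorb it. Fix~(a) gestures at the right idea but ``$Df_x(0)=0$ together with uniform $C^1$ continuity'' only gives the qualitative statement $\osc_{\Ball(0,r)}Df_x\to 0$; it gives no quantitative rate. What is actually needed is a dyadic telescoping of the kind underlying Corollary~\ref{C1kappa-first}: use the estimate $\ang(H_x(\rho),H_x(\rho/2))\lesssim\beta_\Sigma(x,\rho)$ (valid with a universal constant for $C^1$ manifolds at small scales, cf.\ Remark~\ref{rem:Msigma}) on all scales $\rho=r/2^k$, $k\ge 0$, to obtain
\[
\ang\bigl(T_x\Sigma,H_x(r)\bigr)\ \lesssim\ \sum_{k\ge 0}\beta_\Sigma(x,r/2^k)
\ =\ \sum_{k\ge 0}\frac{r}{2^k}\cdot\frac{\beta_\Sigma(x,r/2^k)}{r/2^k}
\ \lesssim\ r\,\sup_{s<R}\frac{\beta_\Sigma(x,s)}{s}\,,
\]
and similarly for the oscillation $\osc_{\Ball(0,r)}Df_x$. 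Then $\dist(y,x+T_x\Sigma)\lesssim\bigl(\beta_\Sigma(x,r)+r\sup_s\beta_\Sigma(x,s)/s\bigr)\,r\lesssim r^2\sup_s\beta_\Sigma(x,s)/s$, dividing by $|y-x|^2\approx r^2$ yields the lemma. Without this summation over scales the proof has a genuine gap; the correct statement simply is not a one-scale comparison between $\dist(y,x+T_x\Sigma)$ and $\beta_\Sigma(x,|x-y|)$.
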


\begin{proof}
  Choose $R>0$ so that for each point $x\in \Sigma$ the intersection $\Sigma\cap
  \Ball^n(x,3R)$ is a graph of a $C^1$ function $f\colon T_x\Sigma\to
  (T_x\Sigma)^\perp$ with oscillation of $D f$ being small. Fix $x\in
  \Sigma$. Set $F(z):=(z,f(z))$ for $z\in P=T_x\Sigma$.  As before, we write
  \[
  \frac{1}{\rtp(x,y;T_x\Sigma)}\equiv\frac{1}{\rtp(x,y)}\, , \qquad x,y\in \Sigma\, .
  \]
  It is clear that for $|x-y|\ge R$ we have $\rtp(x,y)\ge R/2$ by definition. Thus
  \[
  \GTP(x)\le  \frac 2R + \sup_{|x-y|<R} \frac{1}{\rtp(x,y)}\, .
  \]
  It remains to estimate the last term. Now, if 
  $x=F(\xi)$ and $y=F(\eta)\in \Sigma\cap \Ball^n(x,R)$
  with
  \[
  |y-x|=|F(\eta)-F(\xi)|\approx |\eta-\xi|\approx \rho_j\equiv\frac{R}{2^j}\, , \qquad j=0,1,,2,\ldots,
  \]
  then
  \[
  \frac{1}{\rtp(x,y)}=\frac{2\dist(y,x+T_{x_1}\Sigma)}{|y-x|^2}
  \lesssim \frac{\beta_\Sigma(x,\rho_j)}{\rho_j}
  \]
  with an absolute constant. The lemma follows.
\end{proof}

Combining the above lemma with Lemma~\ref{lem:w2p-beta-est}, we conclude
immediately that $\GTP\in L^p$ for $W^{2,p}$-manifolds with $p>m$. The proof of
the implications (1) $\Rightarrow$ (2), (3) of Theorem~\ref{mainthm} is now
complete.

\addcontentsline{toc}{section}{References}

\small
\vspace{0cm}

\bibliography{menger}{}
\bibliographystyle{hamsplain}


\vspace*{.5cm}

\noindent
{\sc S\l{}awomir Kolasi\'{n}ski}\\
Instytut Matematyki\\
Uniwersytet Warszawski\\
ul. Banacha 2\\
PL-02-097 Warsaw \\
POLAND\\
E-mail: {\tt skola@mimuw.edu.pl}

\vspace{.5cm}

\noindent
{\sc Pawe\l{} Strzelecki}\\
Instytut Matematyki\\
Uniwersytet Warszawski\\
ul. Banacha 2\\
PL-02-097 Warsaw \\
POLAND\\
E-mail: {\tt pawelst@mimuw.edu.pl}

\vspace{.5cm}

\noindent
{\sc Heiko von der Mosel}\\
Institut f\"ur Mathematik\\
RWTH Aachen University\\
Templergraben 55\\
D-52062 Aachen\\
GERMANY\\
Email: {\tt heiko@instmath.rwth-aachen.de}

\end{document}